\theoremstyle{plain}
\newtheorem{assumption}{Assumption}
\newtheorem{thm}{Theorem}[section]
\newtheorem*{thm*}{Theorem}
\numberwithin{equation}{section}
\numberwithin{subcase}{case}
\newtheorem{cor}{Corollary}[section]
\newtheorem{lem}{Lemma}[section]
\newtheorem{prop}{Proposition}[section]
\newtheorem{defn}{Definition}[section]
\theoremstyle{definition}
\newcounter {own}
\def\theown {\thesection  .\arabic{own}}
\newenvironment{pf}[1][]{%
 \vskip 3mm
 \noindent
 \ifthenelse{\equal{#1}{}}%
  {{\slshape Proof. }}%
  {{\slshape #1.} }%
 }%
{\qed\bigskip}
\newcounter{alphabet}
\newcommand{\ds}{\displaystyle}
\newcounter{minutes}\setcounter{minutes}{\time}
\newcounter{hours}\setcounter{hours}{\time}
\begin{document}
\bibliographystyle{amsplain}
\title{Derivative sampling expansions in shift-invariant spaces with error estimates covering discontinuous signals}

\thanks{
File:~AntonyPriya3.tex,
          printed: 2022-02-20,
          \thehours.\ifnum\theminutes<10{0}\fi\theminutes}

\author{Kumari Priyanka}
\author{A. Antony Selvan$^\dagger$}
\address{Kumari Priyanka, Indian Institute of Technology Dhanbad, Dhanbad-826 004, India.}
\email{priyankak4193@gmail.com}
\address{A. Antony Selvan, Indian Institute of Technology Dhanbad, Dhanbad-826 004, India.}
\email{antonyaans@gmail.com}

\subjclass[2020]{Primary  42C15, 94A20}
\keywords{B-Splines, Laurent operators, Riesz basis, derivative sampling, shift-invariant spaces, averaged moduli of smoothness. \\
$^\dagger$ {\tt Corresponding author}
}
\maketitle
\pagestyle{myheadings}
\markboth{Kumari Priyanka and A. Antony Selvan}{Derivative sampling expansions in shift-invariant spaces with error estimates .....}

\begin{abstract}
This paper is concerned with the problem of sampling and interpolation involving derivatives in shift-invariant spaces and the error analysis of the derivative sampling expansions for fundamentally large classes of functions.  
A new type of polynomials based on derivative samples is introduced, which is different from the Euler-Frobenius polynomials for the multiplicity $r>1$. A complete characterization of uniform sampling with derivatives is given using Laurent operators.
The rate of approximation of a signal (not necessarily continuous) by the derivative sampling expansions in shift-invariant spaces generated by compactly supported functions is established in terms of 
$L^p$- average modulus of smoothness. Finally, several typical examples illustrating the various problems are discussed in detail.
\end{abstract}
\section{Introduction}
Let $\mathcal{B}_{\sigma}$ denote the space of bandlimited signals of bandwidth $\sigma,$ \textit{i.e.},
\begin{eqnarray*}
\mathcal{B}_{\sigma}=\left\{f\in L^2(\mathbb{R}):  \textrm{supp}\widehat{f}\subseteq [-\sigma,\sigma]\right\}.
\end{eqnarray*}
We will use the following version of the
Fourier transform: 
$$\widehat f(w)= \int_{-\infty}^{\infty} f(t) e^{-2\pi \mathrm{i}wt}dt,~ w\in\mathbb{R}.$$
The fundamental Whittaker-Shannon-Kotel'nikov sampling theorem states that every $f\in\mathcal{B}_{\pi W}$ can be reconstructed from its samples $\{f(n/W):n\in\mathbb{Z}\}$ by the sampling formula
\begin{eqnarray}
f(t)=\ds\sum_{n\in\mathbb{Z}}f\left(\frac{n}{W}\right)\text{sinc}(n-Wt),~\textrm{sinc}   t=\dfrac{\sin \pi t}{\pi t}.
\end{eqnarray}

The space of bandlimited signals is frequently unsuitable for numerical implementations because the sinc function has infinite support and decays very slowly. A shift-invariant space  $V(\phi)$ can be used as a universal model for sampling and interpolation problems because it can include a wide range of functions, whether bandlimited or not, by appropriately choosing a stable generator $\phi.$ The shift-invariant space $V(\phi)$ is defined as 
$$V(\phi):=\left\{f\in L^2(\mathbb{R}): f(x)=\sum\limits_{k\in\mathbb{Z}}c_k\phi(x-k)~\text{for}~ (c_k)\in \ell^2(\mathbb{Z})\right\}.$$
Recall that the generator $\phi$ is said to be stable if $\{\phi(\cdot-n) : n \in \mathbb{Z}\}$ is a Riesz basis for $V(\phi)$, \textit{i.e.},
$\overline{span\{\phi(\cdot-n):n\in\mathbb{Z}\}}=V(\phi)$ and there exist constants $0< A
\leq B<\infty$ such that
\begin{equation*}
A\sum_{n\in\mathbb{Z}}|d_n|^2\leq\big\|\sum_{n\in\mathbb{Z}}d_n\phi(\cdot-n)\big\|^2_{L^2(\mathbb{R})}\leq
B\sum_{n\in\mathbb{Z}}|d_n|^2,
\end{equation*}
for all $(d_n)\in\ell^2(\mathbb{Z})$. 
In addition, if $\phi$ is a continuously $r$-times differentiable function such that
for some 
$\epsilon>0$, $$\phi^{(s)}(t)=\mathcal{O}(|t|^{-0.5-\epsilon})~ \text{as}~ t\to\pm\infty,~ s=0,1,\dots,r,$$
then we say that $\phi$ is an $r$-regular stable generator for $V(\phi)$. In this case, every $f\in V(\phi)$ is $r$-times differentiable and 
\begin{eqnarray*}
f^{(s)}(t)=\sum\limits_{k\in\mathbb{Z}}c_k\phi^{(s)}(t-k),~0\leq s\leq r.
\end{eqnarray*}

The concept of shift-invariant spaces initially appeared in approximation theory and wavelet theory, which generalise the space of bandlimited signals. Sampling in non-bandlimited shift-invariant spaces is a suitable and realistic model for many applications, including taking into account real acquisition and reconstruction devices, modelling signals with smoother spectrum than bandlimited signals, and numerical implementation. We refer to \cite{AlGr} and the references therein for more information.

In several applications, such as aircraft instrument communications, air traffic management simulation, wideband fractional delay filter design \cite{Tseng}, or telemetry \cite{Fogel}, the possibility of obtaining sampling expansion involving sample values of a signal and its derivatives can be considered.
Fogel \cite{Fogel}, Jagermann and Fogel \cite{JagermanFogel}, Linden and Abramson \cite{LindenAbramson} proposed the first work on sampling with derivative values in the space of bandlimited signals.
They proved that if the values of $f\in\mathcal{B}_{\pi W}$ and its first $\rho-1$ derivatives are known on the sequence $\left\{\frac{\rho n}{W}:n\in\mathbb{Z}\right\},$ then $f$ can be reconstructed via the sampling formula
\begin{eqnarray}\label{ssf}
f(t)=\ds\sum_{n\in\mathbb{Z}}\sum_{k=0}^{\rho-1}
f_k\left(t_n\right)\dfrac{(t-t_n)^k}{k!}\left[\textrm{sinc}\tfrac{W}{\rho}(t-t_n)\right]^{\rho},
\end{eqnarray}
where $t_n=\frac{\rho n}{W}$ and $f_k(t_n)$ are linear combinations of $f(t_n), f^\prime(t_n),\dots,f^{(k)}(t_n):$

$$f_k(t_n):=\sum_{i=0}^k\binom{k}{i}\left(\frac{\pi W}{\rho}\right)^{k-i}\left[\dfrac{d^{k-i}}{dt^{k-i}}\left(\dfrac{t}{\sin t}\right)^{\rho}\right]\Bigg|_{t=0}f^{(i)}(t_n).$$

The authors \cite{AnAs1} studied the estimates for the truncation, amplitude, and jitter type errors of the sampling series \eqref{ssf} for $\rho=2.$
In \cite{AsAl}, the authors investigated the truncation error bounds of the derivative sampling series \eqref{ssf}. After the error analysis has been established, the authors in \cite{AnAs2, AnTh, ThAl} used the derivative sampling series to compute the eigenvalues of Strum-Liouville problems numerically.

Concerning the generalised sampling, suppose that $s$ linear time-invariant systems $\Upsilon_j$, $j = 1, 2,\dots,s$,
are defined on the shift-invariant space $V(\phi)$. 
A generalised sampling formula in $V(\phi)$ is a sampling expansion that allows any function $f\in V(\phi)$ to be recovered from the generalised samples $\{\Upsilon_jf(\rho l):l\in\mathbb{Z}, j=1,\dots,s\}$, \textit{i.e.},
\begin{equation}\label{gensamplingfmla}
f(t)=\sum\limits_{j=1}^{s}\sum\limits_{l\in\mathbb{Z}}(\Upsilon_jf)(\rho l)\Theta_{j}(t-\rho l),
\end{equation}
where the sampling period $\rho\in\mathbb{N}$ necessarily satisfies $\rho\leq s$ and $\Theta_j\in V(\phi),$ $j=1,\dots,s$ are the reconstruction functions. For generalized sampling expansion, we refer to \cite{AlSun, DjoVai, Garcia1, Garcia2, Garcia3, Papoulis, UnAl, UnZer1, UnZer2}. The authors in \cite {Garcia1} discussed the generalised sampling formulas for shift-invariant spaces, which include average sampling, shift sampling, and derivative sampling. They provided equivalent conditions for stable generalised sampling formulas in a shift-invariant space $V(\phi)$ under appropriate hypotheses on the generator $\phi$  and the involved systems. In this paper, we begin by examining their equivalent conditions using block Laurent operators (see Theorem \ref{equlntcondsspsicis}). We only focus on the time-invariant system of the form $\Upsilon_jf(t)=f^{(j)}(a+t)$, but the block Laurent operator technique can be easily applicable to other time-invariant systems.
In practice, it is difficult to verify the equivalent condition based on an invertible matrix $\Psi_{\kappa}(t)$ defined in \eqref{psi} even if the generator $\phi$ is compactly supported. In fact, the problem is equivalent to finding the zeros of polynomials on the unit circle.  

In the literature, the problem of uniform sampling with derivatives has been investigated for the space of splines of multiplicity $r$ \cite{LeSh, PlTa, Reimer, Schoenberg, ScSh}. Schoenberg \cite{Schoenberg} mentioned the Hermite-spline sampling problem on the equidistant lattices $\mathbb{Z}$ or $\tfrac{1}{2}+\mathbb{Z}$ for the space of splines of multiplicity $r$. Contrary to most of the papers addressing the aforementioned problem, we tend to discuss it for the shift-invariant spline spaces, which is the invertibility of the matrix $\Psi_{\kappa}(t)$ for $B$-spline generators.

To explain the problem of sampling with derivatives in $V(\phi)$ more precisely, let us first introduce the following terminology.

\begin{defn}
Let $\phi$ be a $(\rho-1)$-regular stable generator for $V(\phi)$. Then a set $\Gamma=\{x_n: n\in\mathbb{Z}\}$ of real numbers is said to be 
\begin{enumerate}
\item[$(i)$] a set of uniqueness (US) of order $\rho-1$ for $V(\phi)$ if $$ f\in V(\phi)~\text{and}~f^{(i)}(x_n)=0, n\in\mathbb{Z}, i= 0, 1,\dots, \rho-1,$$ together imply that $f\equiv0;$
\item[$(ii)$] a set of stable sampling (SS) of order $\rho-1$ for $V(\phi)$ if there exist constants $A, B>0$ such that $$A\|f\|^2_2\leq\sum\limits_{n\in\mathbb{Z}}\sum\limits_{i=0}^{\rho-1}|f^{(i)}(x_n)|^2\leq B\|f\|_2^2,$$
for all $f\in V(\phi);$
\item[$(iii)$] a set of interpolation (IS) of order $\rho-1$ for $V(\phi)$ if the interpolation problem $$f^{(i)}(x_n)=c_{ni}, n\in\mathbb{Z}, i= 0, 1, \dots, \rho-1,$$ has a solution $f\in V(\phi)$ for every square summable sequence $\{c_{ni}: n\in\mathbb{Z}, i= 0, 1, \dots, \rho-1\}.$
\end{enumerate}
If $\Gamma$ is both US and IS of order $\rho-1$ for $V(\phi),$  then it is called a complete interpolation set (CIS) of order $\rho-1$ for $V(\phi).$
\end{defn}
The B-splines are defined inductively as follows:
\begin{equation}\label{bspline}
Q_1(t)=\chi_{[0,1)}(t),~
Q_{m+1}(t)=\displaystyle{\int\limits_{-\infty}^{\infty}Q_m(t-y)Q_1(y)~dy},
~m\geq1.
\end{equation}
The spaces $V(Q_m),$ $m\geq1$ are called shift-invariant spline spaces.
 Schoenberg proved that $\mathbb{Z}$ (resp. $\tfrac{1}{2}+\mathbb{Z}$) is a CIS of order $0$ for shift-invariant even (resp. odd) spline spaces. It is known that $\rho\mathbb{Z} $ is a CIS of order $\rho-1$ for $\mathcal{B}_\pi$. So one may expect that $\rho\mathbb{Z}$ (resp. $\tfrac{1}{2}+\rho\mathbb{Z}$) is a CIS of order $\rho-1$ for shift-invariant even (resp. odd) spline spaces. In this paper, we demonstrate that this is not correct. Indeed, we need to choose a uniform sample set that depends not only on shift but also on the order of the splines and derivatives (see Section 3). Further, the sampling problem involving derivatives in $V(Q_m)$ introduces a new type of polynomials (see Tables 1 and 2). They are different from the Euler-Frobenius polynomials for the multiplicity $r>1$ that is mentioned in \cite{LeSh, Schoenberg}.
The chief aim of the paper is to show under what condition on $\kappa\equiv(Q_m,a,\rho),$ 
the sample set $a+\rho\mathbb{Z}$ is a CIS of order $\rho-1$ for $V(Q_m)$ from the invertibility of $\Psi_{\kappa}(t)$ (see Theorem \ref{cisoforderm-2} and Assumption \ref{assumption}).

There are numerous ways to construct approximation schemes with shift-invariant spaces, as Lei et al. pointed out in \cite{Lei}. 
They provided examples such as cardinal interpolation, quasi-interpolation, projection, and convolution (see also \cite{DDR, Jia, JiaLei}).
The authors in \cite {Garcia1, Garcia2, Garcia3} investigated the approximation from shift-invariant spaces by using generalized sampling formulas \eqref{gensamplingfmla}.
In \cite{Garcia1}, the authors discussed the approximation scheme based on average and shift sampling. In this paper, we construct an approximation scheme based on the derivative sampling formula in $V(\phi)$ as follows:
For a suitable real-valued function $f,$ consider the operator $S_W^{\kappa},$ formally defined as
\begin{equation}\label{derso}
(S_W^{\kappa}f)(t):= \sum\limits_{l\in\mathbb{Z}}\sum\limits_{i=0}^{\rho-1}\frac{1}{W^i}f^{(i)}\Big(\frac{a+\rho l}{W}\Big)\Theta_{i,\kappa}(Wt-\rho l),
\end{equation}
where $\Theta_{i,\kappa}$'s are called the reconstruction functions.
In \cite{Garcia2, Garcia3, KKS, Skopina}, the authors discussed the approximation scheme based on derivative sampling but they have chosen $\rho=1$ only. 

The second aim of the paper is to investigate the convergence and error analysis of the derivative sampling expansion \eqref{derso} for fundamentally large classes of functions.
If $f$ is continuous but not in $V(\phi)$, one normally investigates the convergence and error analysis of the sampling operator $S_W^{\kappa}$ as the dilation factor $W$ approaches infinity, together with aliasing error estimates, expressed in terms of the modulus of continuity of $f$ or its derivatives. 
In practice, however, signals are not always continuous and may contain jump discontinuities, such as shocks received by flying missiles, seismological shocks, or extrasystoles in the case of heartbeats. For this purpose, the authors \cite{BBSV1} discussed the rate of approximation of a signal $f$ by the following sampling operator
$$S_Wf(t)=\ds\sum_{n\in\mathbb{Z}}f\left(\frac{n}{W}\right)\text{sinc}(n-Wt)$$ 
based on the averaged modulus of smoothness $\tau_r(f;W^{-1})_p$ instead of  the classical modulus of continuity. Because the sinc function is not well suited to fast and efficient computation, it is rarely used in real-world applications.
Sampling in shift-invariant spaces generated by compactly supported functions is more accessible and realistic for many applications while retaining some of the simplicity and structure of bandlimited models. For this reason, authors  \cite{BBSV2} extended the results of \cite{BBSV1} for sampling series based on a suitable compactly supported kernel.
Inspired by the work in \cite{BBSV1, BBSV2} and the practical purpose of derivative sampling, we investigate the rate of approximation of a signal $f$ (not necessarily continuous) by the derivative sampling series $S_W^{\kappa}f$ in this paper.

The paper is organized as follows: Section 2 covers some basic facts related to derivative sampling,  the  space $\Lambda_{\rho}^p,$  $\tau$-modulus of continuity, and the interpolation theorem for linear operators from $\Lambda_{\rho}^p$ into $L^p(\mathbb{R})$. Section 3 is devoted to the investigation of the derivative sampling on the shift-invariant spline spaces. Section 4 is concerned with the rate of approximation of a signal $f$ by the sampling operator $S_W^{\kappa}f$ based on $\tau$-modulus. Section 5 contains examples to verify our results and three specific signals to show the error estimates. Finally, the Appendix gives the proof of the interpolation theorem. 
\section{Preliminary Results}
We first introduce the block Laurent operators in order to characterize the sampling and interpolation of uniform samples
involving derivatives.

For a function $f\in L^2[0,1]$, the Fourier transform
$\mathcal{F}:L^2(\mathbb{T})\to \ell^2(\mathbb{Z})$ is defined by $$(\mathcal{F}f)(n)=\widehat{f}(n)=\int_{0}^{1} f(t) e^{-2\pi\mathrm{i}n t}~dt,~n\in\mathbb{Z}.$$
In the sequel when there is no confusion possible, we use $\widehat{f}$ to denote either the Fourier coefficient or Fourier transform of a function.
A bounded linear operator $\mathcal{A}$  on $\ell^2(\mathbb{Z})$ associated with a matrix $\mathcal{A}=[a_{rs}]$ is said to be a \textit{Laurent operator} if $a_{r-k,s-k}=a_{r,s}$, for every $r$, $s$, $k$
$\in\mathbb{Z}$.
For $m\in L^\infty[0,1]$, let us define $\mathcal{M}:L^2[0,1]\to
L^2[0,1]$ by \[(\mathcal{M}f)(t):=m(t)f(t), ~\text{for}~a.e.~t\in [0,1].\]
Then the operator 
$\mathcal{A}=\mathcal{F}\mathcal{M}\mathcal{F}^{-1}$  is  called a Laurent operator defined by the symbol $m$. If it is invertible, then $\mathcal{A}^{-1}$ is also a Laurent operator with the symbol $\tfrac{1}{m}$ and the matrix of $\mathcal{A}^{-1}$ is given by
$$\mathcal{A}^{-1}=\left[\widehat{\left(\tfrac{1}{m}\right)}(r-s)\right].$$
Let $\mathcal{L}$ be an operator on $\ell^2(\mathbb{Z})^m$ associated with an $m\times m$ matrix whose entries are doubly infinite matrices, \textit{i.e.},
\begin{eqnarray*}
 \mathcal{L}=\left(
\begin{array}{ccccccc}
 L_{11} & L_{12}  & \cdots & L_{1m} \\
L_{21} & L_{22}  &  \cdots & L_{2m} \\
\vdots & \vdots  & \ddots & \vdots\\
L_{m1} & L_{m2}  & \cdots & L_{mm}  
 \end{array}
\right), ~ L_{rs} ~\text{is a doubly infinite matrix}.
\end{eqnarray*}
We say that $\mathcal{L}$ is a block Laurent operator on $\ell^2(\mathbb{Z})^m$  if all entries $L_{ij}$'s are Laurent operators on  $\ell^2(\mathbb{Z})$.
For an $m\times m$ matrix valued function $\Psi \in L_{m\times m}^1[0,1]$, the Fourier coefficient $\widehat{\Psi}$ of $\Psi$ is an $m\times m$  matrix 
$$ \widehat \Psi (k):= \int\limits_0^1  e^{-2\pi\mathrm{i} k  t}\Psi(t)~ dt, ~ k \in \mathbb{Z},$$
whose $(i,j)$-th entry is equal to the Fourier coefficients of the $(i, j)$-th entry of $\Psi.$
It is well-known that for every block Laurent operator $\mathcal{L}$, there exists a function $\Psi \in L^{\infty}_{m\times m}(\mathbb{T})$ such that $\mathcal{L}=[\widehat{\Psi}(s-j)]$ (see \cite{GoGoKa1}, p.565-566).
In this case, we say that $\mathcal{L}$ is a block Laurent operator defined by $\Psi$.
Since the class of Laurent operators is closed under addition, and multiplication and multiplication is commutative,
$$\det \mathcal{L}:=\sum\limits_\nu (sgn~\nu)L_{1\nu_1}\cdots L_{m\nu_m},$$
is a well-defined Laurent operator  on $\ell^2(\mathbb{Z})$ with the symbol $\det \Psi$. The following theorem is proved in \cite{GoGoKa1} (see also \cite{GhAn1}). 
\begin{thm}\label{IBL}
Let $\mathcal{L}=[L_{sj}]=[\widehat{\Psi}(s-j)]$ denote the matrix of a function $\Psi\in L^2_{m\times m}[0,1]$.Then
$\mathcal{L}$ is an invertible  block Laurent operator with the symbol $\Psi\in L^\infty_{m\times m}[0,1]$ if and only if there exist  two positive constants $A$ and $B$ such that $$A\leq |\det\Psi(t)|\leq B,~\text{for}~a.e. ~t\in [0,1].$$
\end{thm}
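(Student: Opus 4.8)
The plan is to exploit the commutative ring structure of Laurent operators and to reduce the block statement to the classical scalar case. Write $\mathcal{R}$ for the algebra of all Laurent operators on $\ell^2(\mathbb{Z})$. As recalled above, $\mathcal{R}$ is commutative, the block Laurent operators are precisely the elements of the matrix ring $M_m(\mathcal{R})$, and $\det\mathcal{L}$ is a well-defined member of $\mathcal{R}$ whose symbol is $\det\Psi$. The first thing I would record is the shift characterization: a bounded operator on $\ell^2(\mathbb{Z})$ is Laurent iff it commutes with the bilateral shift $S$, and $\mathcal{L}$ on $\ell^2(\mathbb{Z})^m$ is block Laurent iff it commutes with $S_m:=\mathrm{diag}(S,\dots,S)$ (since this forces each block $L_{ij}$ to commute with $S$). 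I would take as base case the classical scalar fact, read off from the model $\mathcal{A}=\mathcal{F}\mathcal{M}\mathcal{F}^{-1}$ recalled above: a Laurent operator with symbol $g\in L^\infty[0,1]$ is unitarily equivalent to multiplication by $g$ on $L^2[0,1]$, hence invertible iff $1/g\in L^\infty$, i.e. iff $A\le|g(t)|\le B$ a.e., in which case its inverse is the Laurent operator of symbol $1/g$. Finally, since a block Laurent operator is bounded exactly when its symbol is essentially bounded, I work under the standing assumption $\Psi\in L^\infty_{m\times m}[0,1]$; then $|\det\Psi(t)|\le B$ holds automatically, so the genuine content of the equivalence is the lower bound $|\det\Psi(t)|\ge A$ versus invertibility.

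For the direction $(\Leftarrow)$, assume $A\le|\det\Psi(t)|\le B$ a.e. By the base case the scalar Laurent operator $\det\mathcal{L}$, of symbol $\det\Psi$, is invertible with inverse the Laurent operator of symbol $1/\det\Psi\in L^\infty$. Cramer's identity in $M_m(\mathcal{R})$ gives $\mathcal{L}\,\mathrm{adj}(\mathcal{L})=\mathrm{adj}(\mathcal{L})\,\mathcal{L}=(\det\mathcal{L})\,I_m$, where the cofactor entries of $\mathrm{adj}(\mathcal{L})$ are signed sums of products of the $L_{ij}$ and hence bounded Laurent operators. Multiplying by the scalar operator $(\det\mathcal{L})^{-1}$ produces a bounded two-sided inverse $\mathcal{L}^{-1}=(\det\mathcal{L})^{-1}\,\mathrm{adj}(\mathcal{L})\in M_m(\mathcal{R})$, which is therefore again block Laurent, and tracking symbols through the products identifies its symbol as $\Psi^{-1}$.

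For the direction $(\Rightarrow)$, assume $\mathcal{L}$ is an invertible bounded block Laurent operator, so $\Psi\in L^\infty_{m\times m}$ and $|\det\Psi(t)|\le B$ a.e. is immediate, $\det\Psi$ being a polynomial in the essentially bounded entries of $\Psi$. For the lower bound, I would first argue that $\mathcal{L}^{-1}$ is itself block Laurent: since $\mathcal{L}$ commutes with $S_m$, so does $\mathcal{L}^{-1}$, whence $\mathcal{L}^{-1}\in M_m(\mathcal{R})$. Multiplicativity of the determinant over the commutative ring $\mathcal{R}$ then yields $(\det\mathcal{L})(\det\mathcal{L}^{-1})=\det(I_m)=I$, so $\det\mathcal{L}$ is a unit of $\mathcal{R}$, i.e. an invertible scalar Laurent operator. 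Applying the base case to $\det\mathcal{L}$ forces its symbol $\det\Psi$ to satisfy $|\det\Psi(t)|\ge A$ a.e., completing the argument.

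The step I expect to be the main obstacle is the passage, in the $(\Rightarrow)$ direction, from operator invertibility of $\mathcal{L}$ to invertibility inside the matrix ring $M_m(\mathcal{R})$, that is, the claim that the operator inverse is automatically block Laurent. The shift-commutation argument settles this cleanly, but it is the one place where genuine operator theory, rather than formal algebra, is needed; everything else (Cramer's rule, multiplicativity of the determinant) is manipulation in the commutative ring $\mathcal{R}$, and the only analytic input is the scalar characterization supplied by the multiplication-operator model.
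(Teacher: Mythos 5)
Your argument is correct and complete; note that the paper does not prove this theorem itself but quotes it from \cite{GoGoKa1}, whose proof runs along essentially the same lines as yours (unitary equivalence of a block Laurent operator to multiplication by its matrix symbol, Cramer's rule for the adjugate, and the observation that the inverse of an operator commuting with the bilateral shift again commutes with it and is therefore block Laurent). Your explicit decision to work under the standing assumption $\Psi\in L^\infty_{m\times m}[0,1]$ is also the right reading of the statement, since the stated hypothesis $\Psi\in L^2_{m\times m}[0,1]$ alone does not make $\mathcal{L}$ bounded and the condition $A\leq|\det\Psi|\leq B$ does not force essential boundedness of the entries.
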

If $\phi$ is a $(\rho-1)$-regular stable generator for $V(\phi)$, then  
\begin{equation*}
f^{(i)}(t)= \sum\limits_{k\in\mathbb{Z}}\sum\limits_{j=0}^{\rho-1}c_{\rho k+j}\phi^{(i)}(t-\rho k-j),~0\leq i\leq\rho-1,
\end{equation*}
for every $f\in V(\phi)$. For each fixed $\kappa\equiv(\phi,a,\rho),$ we consider the sample set $\Gamma_{a,\rho}= a+\rho\mathbb{Z}$ for $V(\phi)$, where $\rho\in\mathbb{N}$ and $a\in[0, \rho).$  To each $\Gamma_{a,\rho},$ we associate the following infinite system
\begin{equation}\label{fp(x)}
f^{(i)}(a+\rho l)=\sum\limits_{k\in \mathbb{Z}}\sum\limits_{j=0}^{\rho-1} c_{\rho k+j}\phi^{(i)}(a+\rho(l-k)-j),
\end{equation}
for $l\in\mathbb{Z}$ and $i=0,1,\dots,\rho-1.$ Let us define the $\rho\times \rho$ block matrix
\begin{eqnarray}\label{entriesofu}
U_{\kappa}=
\begin{bmatrix}
U^{00}_{\kappa} & U^{01}_{\kappa} & \cdots& U^{0,\rho-1}_{\kappa}\\
U^{10}_{\kappa} & U^{11}_{\kappa} & \cdots& U^{1,\rho-1}_{\kappa}\\
\vdots & \vdots & \ddots & \vdots\\
U^{\rho-1,0}_{\kappa} & U^{\rho-1,1}_{\kappa} & \cdots& U^{\rho-1,\rho-1}_{\kappa}
\end{bmatrix},
\end{eqnarray} 
whose entries are given by $U^{ij}_{\kappa}=[\phi^{(i)}(a+\rho(l-k)-j)]_{l,k\in \mathbb{Z}}.$ Then we can write \eqref{fp(x)} in this form
\begin{equation}\label{uc=f}
U_{\kappa}C=F,
\end{equation}
where $C$ is a $\rho\times 1$ column vector 
\begin{eqnarray*}\label{C}
C=[\{c_{\rho k+0}\}^T_{k\in \mathbb{Z}},\{c_{\rho k+1}\}^T_{k\in \mathbb{Z}},\dots,\{c_{\rho k+\rho-1}\}^T_{k\in \mathbb{Z}}]^T
\end{eqnarray*}
and $F$ is a $\rho\times 1$ column vector
\begin{eqnarray*}\label{fn}
F=[\{f^{(0)}(a+\rho l)\}^T_{l\in \mathbb{Z}},\dots,\{f^{(\rho-1)}(a+\rho l)\}^T_{l\in \mathbb{Z}}]^T.
\end{eqnarray*}
Notice that $U^{ij}_{\kappa}$ is a Laurent operator with the symbol
\begin{equation}\label{laurentsymbol}
\Psi_{\kappa}^{ij}(t)= \sum\limits_{k\in\mathbb{Z}}\phi^{(i)}(a+\rho k-j)e^{2\pi \mathrm{i}kt}.
\end{equation}
Therefore $U_{\kappa}$ is a block Laurent operator defined by the symbol
\begin{equation}\label{psi}
\Psi_{\kappa}(t)=
\begin{bmatrix}
\Psi_{\kappa}^{00}(t) & \Psi_{\kappa}^{01}(t) & \cdots& \Psi_{\kappa}^{0,\rho-1}(t)\\
\Psi_{\kappa}^{10}(t) & \Psi_{\kappa}^{11}(t) & \cdots& \Psi_{\kappa}^{1,\rho-1}(t)\\
\vdots & \vdots & \ddots & \vdots\\
\Psi_{\kappa}^{\rho-1,0}(t) & \Psi_{\kappa}^{\rho-1,1}(t) & \cdots& \Psi_{\kappa}^{\rho-1,\rho-1}(t)
\end{bmatrix}.
\end{equation}
We can prove the following theorem using Theorem \ref{IBL} by making the same argument as in \cite{GhAn}.
\begin{thm}\label{equlntcondsspsicis}
Let $\phi$ be a $(\rho-1)$-regular stable generator for $V(\phi)$. Then the following statements are equivalent.
\begin{itemize}
\item [$(i)$] The sequence $\Gamma_{a,\rho}=\left\{a+\rho l: l\in\mathbb{Z}\right\}$ is an SS of order $\rho-1$ for $V(\phi)$.
\item[$(ii)$] $U_{\kappa}$ is an invertible block Laurent operator on $\ell^2(\mathbb{Z})^m.$
\item[$(iii)$] There exist $m, M>0$ such that the matrix $\Psi_{\kappa}(t)$ in \eqref{psi} satisfies that 
\begin{equation}\label{detpsi}
m\leq |\det \Psi_{\kappa}(t)|\leq M,~\text{for all}~t\in[0, 1].
\end{equation}
\item[($iv)$] $\Gamma_{a,\rho}$ is a CIS of order $\rho-1$ for $V(\phi)$.
\end{itemize}
In this case, every $f\in V(\phi)$ can be written as 
\begin{equation}\label{samplingfor}
f(t)=\sum\limits_{l\in\mathbb{Z}}\sum\limits_{i=0}^{\rho-1}f^{(i)}(a+\rho l)\Theta_{i,\kappa}(t-\rho l),
\end{equation}
where
\begin{equation}\label{theta}
\Theta_{i,\kappa}(t)= \sum_{v\in\mathbb{Z}}\sum\limits_{j=0}^{\rho -1}\widehat{\left(\Psi_{\kappa}^{-1}\right)^{ji}}(v)\phi(t-\rho v-j).
\end{equation}
\end{thm}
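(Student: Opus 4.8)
The plan is to route all four equivalences through the invertibility of the block Laurent operator $U_{\kappa}$ and to use Theorem \ref{IBL} as the bridge to the determinant condition $(iii)$. First I would record the key identification: since $\phi$ is a $(\rho-1)$-regular stable generator, the synthesis map $C\mapsto f=\sum_{k,j}c_{\rho k+j}\phi(\cdot-\rho k-j)$ is a Riesz isomorphism, so that $\|f\|_2^2\asymp\|C\|_2^2$. By \eqref{uc=f} the sampling data $F$ equals $U_{\kappa}C$, and the middle quantity $\sum_{l}\sum_{i=0}^{\rho-1}|f^{(i)}(a+\rho l)|^2$ in the definition of SS is precisely $\|F\|_2^2=\|U_{\kappa}C\|_2^2$. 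The decay $\phi^{(i)}(t)=\mathcal{O}(|t|^{-0.5-\epsilon})$ guarantees that each symbol entry $\Psi_{\kappa}^{ij}$ in \eqref{laurentsymbol} lies in $L^\infty[0,1]$, so $U_{\kappa}$ is a bounded block Laurent operator on $\ell^2(\mathbb{Z})^{\rho}$ with symbol $\Psi_{\kappa}$ given by \eqref{psi}.

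For $(i)\Leftrightarrow(ii)$ I would transport the two-sided SS estimate through the Riesz isomorphism: the bounds $A\|f\|_2^2\le\|F\|_2^2\le B\|f\|_2^2$ become $A'\|C\|_2^2\le\|U_{\kappa}C\|_2^2\le B'\|C\|_2^2$ for every $C\in\ell^2(\mathbb{Z})^{\rho}$, that is, $U_{\kappa}$ is bounded above and below. Because $U_{\kappa}$ is a block Laurent operator, boundedness below already forces invertibility, which is exactly the content captured by Theorem \ref{IBL} (the upper determinant bound reflects the automatic boundedness of $U_{\kappa}$, while the lower bound encodes boundedness below). The step $(ii)\Leftrightarrow(iii)$ is then the direct application of Theorem \ref{IBL} to $\mathcal{L}=U_{\kappa}=[\widehat{\Psi_{\kappa}}(s-j)]$. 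For $(ii)\Leftrightarrow(iv)$ I would unwind the definition of CIS: US of order $\rho-1$ asserts that $F=0$ implies $f\equiv0$, hence $C=0$, which is injectivity of $U_{\kappa}$; IS of order $\rho-1$ asserts that every square-summable data array $F$ is attained, which is surjectivity of $U_{\kappa}$. Thus CIS is equivalent to $U_{\kappa}$ being bijective, and by the bounded inverse theorem this coincides with invertibility, closing the cycle.

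To obtain the reconstruction formula I would use that, when $U_{\kappa}$ is invertible, its inverse is again a block Laurent operator with symbol $\Psi_{\kappa}^{-1}$, whose $(j,i)$-block is the Laurent operator with matrix entries $\widehat{(\Psi_{\kappa}^{-1})^{ji}}(k-l)$. Solving \eqref{uc=f} yields $c_{\rho k+j}=\sum_{i=0}^{\rho-1}\sum_{l\in\mathbb{Z}}\widehat{(\Psi_{\kappa}^{-1})^{ji}}(k-l)\,f^{(i)}(a+\rho l)$. Substituting into $f=\sum_{k,j}c_{\rho k+j}\phi(\cdot-\rho k-j)$, interchanging the order of summation so that $l$ and $i$ come first, and setting $v=k-l$ recollects $f(t)=\sum_{l\in\mathbb{Z}}\sum_{i=0}^{\rho-1}f^{(i)}(a+\rho l)\Theta_{i,\kappa}(t-\rho l)$ with $\Theta_{i,\kappa}$ exactly as in \eqref{theta}.

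The main obstacle I anticipate lies not in any single equivalence---each reduces cleanly to Theorem \ref{IBL}---but in the analytic justifications underpinning them. One must verify that the one-sided lower SS bound genuinely propagates to a pointwise lower bound on $|\det\Psi_{\kappa}|$: the standard argument localises a near-null symbol vector on a small level set of the determinant to contradict the lower frame bound, so that $(i)$ forces $(iii)$. The other delicate point is rigorously justifying the interchange of the triple sum in the reconstruction step, which requires absolute convergence. This follows from $C\in\ell^2$, the $\ell^1$-type decay of the coefficients $\widehat{(\Psi_{\kappa}^{-1})^{ji}}$ inherited from the smoothness of $\Psi_{\kappa}^{-1}$ (a consequence of $\det\Psi_{\kappa}$ staying away from zero), and the decay of $\phi$; once absolute convergence is secured, Fubini for series legitimises the rearrangement and the explicit form \eqref{theta} of the reconstruction functions follows.
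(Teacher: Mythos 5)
Your proposal is correct and follows essentially the same route as the paper, which establishes this theorem precisely by the block Laurent reduction you describe --- identifying the sampling map with $U_{\kappa}$ via \eqref{uc=f}, invoking Theorem \ref{IBL} for the determinant condition, and inverting the symbol to produce the reconstruction functions \eqref{theta} (the paper itself gives no further details, simply citing the argument of \cite{GhAn}). The one imprecision is your claim that the decay $\phi^{(i)}(t)=\mathcal{O}(|t|^{-0.5-\epsilon})$ places each $\Psi_{\kappa}^{ij}$ in $L^{\infty}[0,1]$ --- it only yields square-summable coefficients, hence an $L^{2}$ symbol --- but this is harmless here, since Theorem \ref{IBL} is stated for $\Psi\in L^{2}_{m\times m}[0,1]$ and the boundedness of $U_{\kappa}$ is in any case supplied by each of the equivalent conditions (e.g.\ by the upper SS bound in $(i)$).
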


In order to estimate the error for the sampling series involving derivative samples, we extend the interpolation theorem from \cite{BBSV1}.
 Let $M(\mathbb{R})$ be the set of all bounded measurable functions on $\mathbb{R}$. Let $C(I)$ be the set of all continuous functions on the interval $I$ and $AC_{loc}^r(\mathbb{R})$ be the set of all $r$-fold locally absolutely continuous functions on $\mathbb{R}.$
Further, let 
\begin{equation*}
(\Delta_h^rf)(t):=\ds\sum_{j=0}^r(-1)^{r-j} {\binom{r}{j}} f(t+jh)
\end{equation*}
be the classical finite forward difference of order $r\in\mathbb{N}$ of $f$ with increment $h$ at the point $t$.
\begin{defn}
For $f\in M(\mathbb{R})$, the local modulus of smoothness of order $r\in\mathbb{N}$ at the point $x\in\mathbb{R} $  is defined for $\delta\geq 0$ by
$$\omega_r(f;x;\delta)
:= \sup\Big\{|(\Delta^r_hf)(t)|; t, t+rh\in\big[x-\tfrac{r\delta}{2}, x+\tfrac{r\delta}{2}\big] \Big\}.$$
\end{defn}
\begin{defn}
For $f\in M(\mathbb{R})$, $1\leq p<\infty,$ the $L^p$-averaged modulus of smoothness of order $r\in\mathbb{N}$ $($or $\tau$-modulus$)$ is defined for $\delta\geq0$ by
$$\tau_r(f;\delta)_p:=\|\omega_r(f;\bullet;\delta)\|_{L^p(\mathbb{R})}.$$
\end{defn}
\begin{defn}
\noindent
\begin{itemize}
\item[$(i)$]  A real sequence $\Sigma:= (x_j)_{j\in\mathbb{Z}}$ is said to be an admissible partition of $\mathbb{R}$ or an admissible sequence, if it satisfies 
$$0<\underline\Delta:=\inf\limits_{j\in\mathbb{Z}}(x_j-x_{j-1})\leq \overline\Delta:=\sup\limits_{j\in\mathbb{Z}}(x_j-x_{j-1})<\infty.$$
$\underline\Delta$ and $\overline\Delta$ are called the lower and upper mesh size, respectively.
\item[$(ii)$] Let $\Sigma:=(x_j)_{j\in\mathbb{Z}}$ be an admissible partition of $\mathbb{R},$ and let $\Delta_j= x_j-x_{j-1}.$ For a $(\rho-1)$-times differentiable function $f$ on $\Sigma,$ the discrete $\ell_{\rho}^p(\Sigma)$-norm of $f$ is defined for $1\leq p<\infty$ by
\begin{equation}\label{lpsigma}
\|f\|_{\ell_{\rho}^p(\Sigma)}:= \left\{\sum\limits_{j\in\mathbb{Z}}\sum\limits_{i=0}^{\rho-1}|f^{(i)}(x_j)|^p\Delta_j\right\}^{1/p}.
\end{equation}
\item[$(iii)$] The space $\Lambda_{\rho}^p$ for $1\leq p<\infty$ is defined by
$$\Lambda_{\rho}^p:=\left\{f\in M(\mathbb{R}): \|f\|_{\ell_{\rho}^p(\Sigma)}<\infty~ \text{for each admissible sequence} ~\Sigma\right\}.$$
\item[$(iv)$] The Sobolev spaces $W^r_p\equiv W^r(L^p(\mathbb{R})),$ $r\in\mathbb{N},$ $1\leq p<\infty,$ are given by
$$W_p^r:=\left\{f\in L^p(\mathbb{R}): f(t)=\phi(t) ~\textit{a.e.}, \phi\in AC^{(r)}_{loc}(\mathbb{R}), \phi^{(r)}\in L^p(\mathbb{R})\right\}.$$
\end{itemize}
\end{defn}

Arguing as in \cite {BBSV1}, we can show that if $f\in W_p^r\cap C(\mathbb{R})$ for some $r\geq\rho$, $1\leq p<\infty,$ then
\begin{equation}\label{inequality}
\|f\|_{\ell_{\rho}^p(\Sigma)}\leq\sum\limits_{i=0}^{\rho-1}\left(\|f^{(i)}\|_{L^p(\mathbb{R})}+ \overline\Delta\|f^{(i+1)}\|_{L^p(\mathbb{R})}\right),
\end{equation}
for any admissible partition $\Sigma$ with upper mesh size $\overline\Delta$ 
and hence $W_p^r\cap C(\mathbb{R})\subset\Lambda_{\rho}^p.$

Now we consider a family of linear operators $(L_{\beta})_{\beta\in I},$ $I$ being an index set, from $\Lambda_{\rho}^p$ to $L^p(\mathbb{R}).$ Further, $(\Sigma_{\beta})_{\beta\in I}$ is a family of admissible partitions $(x_{j,\beta})_{j\in\mathbb{Z}}$ with
$$\Delta_{j,\beta}= x_{j,\beta}-x_{j-1,\beta},~\overline\Delta_{\beta}=\sup\limits_{j\in\mathbb{Z}}(x_{j,\beta}-x_{j-1,\beta}), ~\text{and}~ \underline\Delta_{\beta}=\inf\limits_{j\in\mathbb{Z}}(x_{j,\beta}-x_{j-1,\beta}).$$
\begin{thm}\label{geninterpolation}
Let $(\Sigma_{\beta})_{\beta\in I}$ be a family of partitions as above with upper mesh sizes $\overline\Delta_{\beta}.$ Let $(L_{\beta})_{\beta\in I}$ be a family of linear operators mapping  $\Lambda_{\rho}^p$ into $L^p(\mathbb{R}),$ $1\leq p<\infty,$ satisfying the following properties:
\begin{itemize}
\item [$(i)$] $\|L_{\beta}f\|_{L^p(\mathbb{R})}\leq K_1\|f\|_{\ell_{\rho}^p(\Sigma_{\beta})}$, $(f\in\Lambda_{\rho}^p;~\beta\in I),$
\item [$(ii)$] $\|L_{\beta}g-g\|_{L^p(\mathbb{R})} \leq K_2\overline\Delta_{\beta}^s\|g^{(r)}\|_{L^p(\mathbb{R})},$ $(g\in W_p^r\cap C(\mathbb{R}), \beta\in I)$
\end{itemize}
for some fixed $r,s\in\mathbb{N}$ with $s\leq r$ and the constants $K_1,$ $K_2$ depending only on $\rho$ and $r.$ Then for each $f\in\Lambda_{\rho}^p$ and each $\overline\Delta_{\beta}\leq r$ there holds the estimate
\begin{equation}\label{interpolationthm}
\|L_{\beta}f-f\|_{L^p(\mathbb{R})}\leq \sum\limits_{i=0}^{\rho-1}c_i\tau_r(f^{(i)};\overline\Delta_{\beta}^{s/r})_p, ~(\beta\in I),
\end{equation}
where the constants $c_i,$ $i=0,1,\dots, \rho-1$  depend only on $K_1$ and $K_2.$ 
\end{thm}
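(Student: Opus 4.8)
The plan is to reproduce the scalar interpolation argument of \cite{BBSV1} in the spirit of a $K$-functional estimate, with $\tau_r$ playing the role of the interpolating functional between the two hypotheses, and to reduce the derivative-laden discrete norm to the scalar theory one order of differentiation at a time. Fix $\beta\in I$ and abbreviate $\overline\Delta=\overline\Delta_\beta$, $\Sigma=\Sigma_\beta$; the natural scale is $\delta:=\overline\Delta^{s/r}$. For any intermediary $g\in W_p^r\cap C(\mathbb{R})$, the triangle inequality together with hypotheses $(i)$ and $(ii)$ gives
$$\|L_\beta f-f\|_{L^p(\mathbb{R})}\le K_1\|f-g\|_{\ell_{\rho}^p(\Sigma)}+K_2\,\overline\Delta^{\,s}\|g^{(r)}\|_{L^p(\mathbb{R})}+\|g-f\|_{L^p(\mathbb{R})}.$$
First I would take $g=\mu_\delta\ast f$, where $\mu_\delta$ is the compactly supported, sufficiently smooth, mass-one $r$-th order smoothing kernel underlying the Jackson theory for the averaged modulus, scaled to width $\delta$. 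Since convolution commutes with differentiation and $f$ is $(\rho-1)$-times differentiable, $g^{(i)}=\mu_\delta\ast f^{(i)}$ for $0\le i\le\rho-1$, so each $g^{(i)}$ is the \emph{same} regularization applied to $f^{(i)}$. Two scalar facts then hold for every $f^{(i)}$: the pointwise Jackson estimate
$$\big|f^{(i)}(x)-g^{(i)}(x)\big|\le c\,\omega_r\big(f^{(i)};x;c\delta\big),$$
valid because $f^{(i)}-\mu_\delta\ast f^{(i)}$ is an average of $r$-th order differences $\Delta_h^r f^{(i)}$ with $|h|\le c\delta$, and the Bernstein-type bound $\big\|(g^{(i)})^{(r)}\big\|_{L^p(\mathbb{R})}\le c\,\delta^{-r}\tau_r(f^{(i)};\delta)_p$.

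The two $L^p$ summands are then immediate. Integrating the pointwise estimate at $i=0$ gives $\|g-f\|_{L^p(\mathbb{R})}\le c\,\|\omega_r(f^{(0)};\bullet;c\delta)\|_{L^p}=c\,\tau_r(f^{(0)};c\delta)_p$. For the middle term I would use the Bernstein bound at $i=0$ together with the identity $\overline\Delta^{\,s}\delta^{-r}=1$ forced by $\delta=\overline\Delta^{s/r}$, obtaining $K_2\,\overline\Delta^{\,s}\|g^{(r)}\|_{L^p}\le c\,\tau_r(f^{(0)};\delta)_p$. Both therefore contribute only to the $i=0$ term of the asserted sum.

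The genuine work is the discrete term $\|f-g\|_{\ell_{\rho}^p(\Sigma)}$, which expands as $\big(\sum_{i=0}^{\rho-1}\sum_{j}|f^{(i)}(x_j)-g^{(i)}(x_j)|^p\Delta_j\big)^{1/p}$. For each $i$, the pointwise Jackson estimate turns the inner sum into $\sum_j\omega_r(f^{(i)};x_j;c\delta)^p\Delta_j$. Invoking the slow variation of the local modulus, namely $\omega_r(f^{(i)};x_j;c\delta)\le\omega_r(f^{(i)};t;c\delta+2\overline\Delta)$ for $t$ in the mesh interval $I_j$ containing $x_j$, I would majorize each summand by $\int_{I_j}\omega_r(f^{(i)};t;c\delta+2\overline\Delta)^p\,dt$ and sum over $j$ to get $c\,\tau_r(f^{(i)};c\delta+2\overline\Delta)_p^{\,p}$. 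Summing over $i$ yields $\|f-g\|_{\ell_{\rho}^p(\Sigma)}\le\sum_{i=0}^{\rho-1}c_i'\,\tau_r(f^{(i)};c\delta+2\overline\Delta)_p$.

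Finally I would normalize every modulus argument to $\delta$. Using $s\le r$ one checks $c\delta+2\overline\Delta\le C_r\,\delta$ for all $\overline\Delta\le r$: when $\overline\Delta\le1$ this follows from $\overline\Delta\le\overline\Delta^{\,s/r}=\delta$, and when $1\le\overline\Delta\le r$ from $\delta\ge1$ and $\overline\Delta\le r\le r\delta$. The growth property $\tau_r(\,\cdot\,;\lambda\delta)_p\le c\,(1+\lambda)^r\tau_r(\,\cdot\,;\delta)_p$ of the averaged modulus then converts each $\tau_r(f^{(i)};c\delta+2\overline\Delta)_p$ into a constant multiple of $\tau_r(f^{(i)};\delta)_p$; this is precisely where the restriction $\overline\Delta_\beta\le r$ is used. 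Collecting the three summands gives \eqref{interpolationthm} with constants depending only on $K_1,K_2,\rho,r$. I expect the main obstacle to be the discrete-norm estimate of the previous paragraph: establishing the pointwise order-$r$ regularization bound for each derivative $f^{(i)}=(\,\mu_\delta\ast f\,)^{(i)}$ and passing from the weighted point-sample sum to the integral $\tau_r(f^{(i)};\,\cdot\,)_p$. This is the step that genuinely couples the sampling lattice $\Sigma_\beta$ to the averaged modulus, and where the multiplicity $\rho>1$---the presence of derivatives in $\|\cdot\|_{\ell_{\rho}^p}$---forces the argument to depart from the scalar case of \cite{BBSV1}.
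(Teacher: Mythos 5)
Your proposal is correct and follows essentially the same route as the paper: the intermediary $g=\mu_\delta\ast f$ is exactly the Steklov-type function $f_{r,h}$ of Proposition \ref{frh} (whose three properties are your pointwise Jackson estimate for each $f^{(i)}$, the Bernstein-type derivative bound, and the $L^p$ closeness), the three-term triangle-inequality split and the passage from the weighted point-sample sum to $\tau_r(f^{(i)};h+c\overline\Delta_\beta)_p$ coincide with the paper's estimates \eqref{omega} and \eqref{S1}--\eqref{interpolations3}, and the final normalization at scale $\overline\Delta_\beta^{s/r}$ using $\overline\Delta_\beta\le r$, $s\le r$ and the dilation property \eqref{proptau} of the averaged modulus is the same. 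No gaps.
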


The proof of the aforementioned theorem can be found in Appendix. Furthermore, one may generalise Theorem \ref{geninterpolation} to find the error estimate  of derivatives $f^{(s)}.$

\section{Derivative sampling in shift-invariant spline spaces}
We first recall some well-known facts in the theory of splines. The $B$-splines defined in \eqref{bspline} satisfy the following properties.
\begin{enumerate}
\item [$(i)$] $Q_m$ is compactly supported in the interval $[0,m]$.
 \item [$(ii)$] $Q_m(t)=\dfrac{1}{(m-1)!}\sum\limits_{j=0}^{m}(-1)^j
\binom{m}{j} \left(t-j\right)_+^{m-1}$, $m\geq 2$,
where $t_+=\max(0,t)$.
\item[$(iii)$] $Q_m^{(k)}(t)=\sum\limits_{r=0}^k(-1)^{r}{\binom{k}{r}}Q_{m-k}({t-r}),$ $k=0,1,\dots, m-2,~m\geq2.$
\item[$(iv)$] $Q_m$ forms a partition of unity, \textit{i.e.},     $\ds\sum_{k\in\mathbb{Z}}Q_m(t-k)=1$, for every $t\in\mathbb{R}$.
\item[$(v)$] $Q_m$ satisfies the Strang-Fix condition of order $m-1$, \textit{i.e.},
$$\widehat{Q_m}^{(r)}(l)=0,~r=0,1,\dots,m-1~\text{and}~ l\in\mathbb{Z}\setminus\{0\}.$$
\end{enumerate}
It is well-known that $f\in V(Q_m)$ if 
and only if $f\in C^{m-2}(\mathbb{R})\bigcap L^2(\mathbb{R}) $  and the restriction of $f$ to each interval $\left[n,n+1\right),n\in\mathbb{Z}$ is a polynomial of degree $\leq m-1$.
\begin{thm}\label{RBI}\cite{Mischenko, AntoRad1}.
The system $\{Q_m(\cdot-n):n\in\mathbb{Z}\}$ forms a Riesz basis for $V(Q_m)$ with optimal Riesz bounds $\dfrac{2^{2m-1}}{\pi^{2m-1}}K_{2m-1}$ and $1$, \textit{i.e.,}
\begin{equation}\label{RB}
\dfrac{2^{2m-1}}{\pi^{2m-1}}K_{2m-1}\sum_{n\in\mathbb{Z}}|d_n|^2\leq\big\|\sum_{n\in\mathbb{Z}}d_nQ_m(\cdot-n)\big\|^2_2\leq
\sum_{n\in\mathbb{Z}}|d_n|^2,
\end{equation}
for all $(d_n)\in\ell^2(\mathbb{Z})$,
where $K_m$'s are the Krein-Favard constants:
\begin{eqnarray*}
K_m=\dfrac{4}{\pi}\sum\limits_{\nu=0}^{\infty}\frac{(-1)^{\nu(m+1)}}{(2\nu+1)^{m+1}},~~~m=0,1,2,3,\dots.\\
\end{eqnarray*}
\end{thm}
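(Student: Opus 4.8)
The plan is to reduce the Riesz basis inequality \eqref{RB} to a pointwise estimate on the bracket (periodization) function and then to locate its extrema. Recall the standard computation: for $f=\sum_n d_n Q_m(\cdot-n)$ with $1$-periodic symbol $D(w)=\sum_n d_n e^{-2\pi\mathrm{i}nw}$, Plancherel's theorem together with periodization gives
\[
\Big\|\sum_n d_n Q_m(\cdot-n)\Big\|_2^2=\int_0^1 |D(w)|^2\,G_{Q_m}(w)\,dw,\qquad G_{Q_m}(w):=\sum_{k\in\mathbb{Z}}|\widehat{Q_m}(w+k)|^2,
\]
while $\sum_n|d_n|^2=\int_0^1|D(w)|^2\,dw$. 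Hence the \emph{optimal} Riesz bounds are precisely the essential infimum and supremum of $G_{Q_m}$, and the whole theorem reduces to showing $\min_w G_{Q_m}(w)=\tfrac{2^{2m-1}}{\pi^{2m-1}}K_{2m-1}$ and $\max_w G_{Q_m}(w)=1$. Using $\widehat{Q_m}(w)=\big(\tfrac{1-e^{-2\pi\mathrm{i}w}}{2\pi\mathrm{i}w}\big)^m$, so that $|\widehat{Q_m}(w)|^2=\big(\tfrac{\sin\pi w}{\pi w}\big)^{2m}$, and $\sin\pi(w+k)=(-1)^k\sin\pi w$, I obtain the closed form
\[
G_{Q_m}(w)=\frac{\sin^{2m}\pi w}{\pi^{2m}}\sum_{k\in\mathbb{Z}}\frac{1}{(w+k)^{2m}}=\sum_{k\in\mathbb{Z}}\left(\frac{\sin\pi(w+k)}{\pi(w+k)}\right)^{2m},
\]
a continuous $1$-periodic function that is even and symmetric about $w=\tfrac12$.

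For the upper bound I would pass to the autocorrelation. Since $|\widehat{Q_m}|^2$ is the Fourier transform of $t\mapsto\int Q_m(s)Q_m(s-t)\,ds$, and the symmetry $Q_m(t)=Q_m(m-t)$ identifies this autocorrelation with $t\mapsto Q_{2m}(m+t)$, Poisson summation yields the finite cosine expansion
\[
G_{Q_m}(w)=\sum_{n\in\mathbb{Z}}Q_{2m}(m+n)\,e^{-2\pi\mathrm{i}nw}.
\]
Because $Q_{2m}\ge0$ and the partition-of-unity property (property $(iv)$) gives $\sum_n Q_{2m}(m+n)=1$, the triangle inequality forces $G_{Q_m}(w)\le1$ for every $w$, with equality at $w\in\mathbb{Z}$. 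This establishes the optimal upper bound $B=1$.

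For the lower bound I would argue that the minimum over a period is attained at the half-integer $w=\tfrac12$. Granting this, the computation finishes directly: evaluating the periodized-sinc form,
\[
G_{Q_m}(\tfrac12)=\frac{1}{\pi^{2m}}\sum_{k\in\mathbb{Z}}\frac{1}{(k+\tfrac12)^{2m}}=\frac{2^{2m+1}}{\pi^{2m}}\sum_{\nu=0}^{\infty}\frac{1}{(2\nu+1)^{2m}},
\]
and the series formula for the Krein--Favard constant, $K_{2m-1}=\tfrac{4}{\pi}\sum_{\nu\ge0}(2\nu+1)^{-2m}$ (here $(-1)^{2m\nu}=1$), gives exactly $G_{Q_m}(\tfrac12)=\tfrac{2^{2m-1}}{\pi^{2m-1}}K_{2m-1}$, the claimed value of $A$.

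The crux, and the step I expect to be the main obstacle, is proving that $w=\tfrac12$ is the global minimizer, i.e.\ that $G_{Q_m}$ is nonincreasing on $[0,\tfrac12]$. The upper bound came for free from the triangle inequality, but the lower extremum carries genuine content: writing $G_{Q_m}(w)=Q_{2m}(m)+2\sum_{n\ge1}Q_{2m}(m+n)\cos2\pi nw$ and differentiating gives $G_{Q_m}'(w)=-4\pi\sum_{n\ge1}n\,Q_{2m}(m+n)\sin2\pi nw$, whose sign on $(0,\tfrac12)$ is not immediate since the coefficients are all nonnegative while the sines $\sin2\pi nw$ change sign with $n$. I would settle this either through the total-positivity (Pólya-frequency) structure of the B-spline autocorrelation sequence $\{Q_{2m}(m+n)\}$, which pins the extreme values of the symbol at the frequencies $w=0$ and $w=\tfrac12$, or by symmetrizing the series $\sum_k\big(\tfrac{\sin\pi(w+k)}{\pi(w+k)}\big)^{2m}$ over the index pairs $\{k,-1-k\}$ and checking termwise monotonicity. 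Either route isolates a single sign analysis as the only nonroutine ingredient, the remaining steps being the elementary evaluation above; for $m=1$ the function is identically $1$ (the classical identity $\sum_k\operatorname{sinc}^2(w+k)=1$), consistent with $\{Q_1(\cdot-n)\}$ being orthonormal.
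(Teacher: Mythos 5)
The paper does not prove this statement at all: Theorem \ref{RBI} is quoted from \cite{Mischenko, AntoRad1}, so there is no in-paper proof to compare against. Judged on its own, your proposal is the standard argument from those references and everything you actually execute is correct: the reduction of the optimal Riesz bounds to the essential extrema of the bracket function $G_{Q_m}(w)=\sum_k|\widehat{Q_m}(w+k)|^2$, the closed form $G_{Q_m}(w)=\sum_k\bigl(\tfrac{\sin\pi(w+k)}{\pi(w+k)}\bigr)^{2m}$, the identification of the Fourier coefficients of $G_{Q_m}$ with the autocorrelation values $Q_{2m}(m+n)$, the upper bound $G_{Q_m}\le 1$ from nonnegativity plus partition of unity (with equality at $w=0$), and the evaluation $G_{Q_m}(\tfrac12)=\tfrac{2^{2m+1}}{\pi^{2m}}\sum_{\nu\ge0}(2\nu+1)^{-2m}=\tfrac{2^{2m-1}}{\pi^{2m-1}}K_{2m-1}$ all check out (and the $m=1$ sanity check is right).

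The one step you defer --- that $w=\tfrac12$ is the global minimizer of $G_{Q_m}$ --- is a genuine piece of content, and of your two suggested routes only the first reliably closes it. By Schoenberg's theorem the Euler--Frobenius polynomial $\sum_n Q_{2m}(m+n)z^n$ has only negative simple real zeros, occurring in reciprocal pairs $\{\lambda,1/\lambda\}$; writing $G_{Q_m}(w)$ as a positive constant times a product of factors $|e^{2\pi\mathrm{i}w}-\lambda|\,|e^{2\pi\mathrm{i}w}-1/\lambda|$ and noting that $|e^{\mathrm{i}\theta}-\lambda|^2=1+2|\lambda|\cos\theta+\lambda^2$ is minimized at $\theta=\pi$ for $\lambda<0$, each factor is minimized simultaneously at $w=\tfrac12$, which finishes the proof. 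You should cite that root-location theorem explicitly rather than leave it as one of two options. The alternative you float --- pairing indices $\{k,-1-k\}$ and checking termwise monotonicity of $\bigl(\tfrac{\sin\pi(w+k)}{\pi(w+k)}\bigr)^{2m}+\bigl(\tfrac{\sin\pi(w-1-k)}{\pi(w-1-k)}\bigr)^{2m}$ on $[0,\tfrac12]$ --- is not obviously true term by term and I would not rely on it without verification; as written it is an unproven assertion, not a proof.
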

\begin{thm}\label{cisoforderm-2}
Let $m\geq2.$ Then $(m-1)\mathbb{Z}$ is a CIS of order $m-2$ for the shift-invariant spline space $V(Q_m).$
\end{thm}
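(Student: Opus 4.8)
The plan is to deduce the statement from the characterization in Theorem \ref{equlntcondsspsicis}. With $\kappa=(Q_m,0,m-1)$, so that $\rho=m-1$ and $a=0$, it suffices to verify condition $(iii)$, namely that $|\det\Psi_{\kappa}(t)|$ is bounded above and below by positive constants on $[0,1]$. Everything will hinge on the structural fact that the sampling spacing $m-1$ is exactly one less than the length $m$ of $\operatorname{supp}Q_m$.

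First I would compute the symbol \eqref{laurentsymbol}. Its entries are $\Psi_{\kappa}^{ij}(t)=\sum_{k\in\mathbb{Z}}Q_m^{(i)}((m-1)k-j)\,e^{2\pi\mathrm{i}kt}$ for $i,j\in\{0,\dots,m-2\}$. Since $\operatorname{supp}Q_m=[0,m]$ and $Q_m^{(i)}$ vanishes at the endpoints $0$ and $m$ for every $i\le m-2$, the only integers at which the summand can be nonzero are the interior knots $1,\dots,m-1$. Solving $(m-1)k-j\in\{1,\dots,m-1\}$ with $0\le j\le m-2$ forces $k=1$, since the sole multiple of $m-1$ in $\{1,\dots,2m-3\}$ is $m-1$ itself. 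Hence only the $k=1$ term survives and $\Psi_{\kappa}(t)=e^{2\pi\mathrm{i}t}\,G$, where $G=\big[Q_m^{(i)}(m-1-j)\big]_{i,j=0}^{m-2}$ is a \emph{constant} matrix. Consequently $\det\Psi_{\kappa}(t)=e^{2\pi\mathrm{i}(m-1)t}\det G$ has constant modulus $|\det G|$, so both bounds in \eqref{detpsi} are automatic and the entire problem collapses to showing $\det G\neq0$.

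This nonsingularity is the crux. After reversing the column order (which only changes the sign of the determinant) the columns of $G$ become the derivative jets $J(n):=\big(Q_m(n),Q_m'(n),\dots,Q_m^{(m-2)}(n)\big)^{T}$ at the interior knots $n=1,\dots,m-1$. The plan is to exploit the piecewise-polynomial structure: on each unit interval $Q_m^{(m-1)}$ equals the constant $\gamma_n:=(-1)^n\binom{m-1}{n}$, and Taylor expansion together with the $C^{m-2}$ matching across knots yields the recursion $J(n+1)=A\,J(n)+\gamma_n v$, where $A=\big[1/(k-i)!\big]_{k\ge i}$ is the truncated Taylor-shift matrix and $v_i=1/(m-1-i)!$. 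Since $J(0)=0$ and $\gamma_0=1$, one gets $J(1)=v$ and then, inductively, $J(n)=A^{n-1}v+(\text{lower powers of }A\text{ applied to }v)$. Unitriangular column operations therefore reduce $\det G$, up to sign, to the Krylov determinant $\det\big[\,v\mid Av\mid\cdots\mid A^{m-2}v\,\big]$.

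Finally I would observe that $A=\exp(N)$ for the nilpotent single Jordan block $N$ (ones on the superdiagonal), so $A$ is a single unipotent Jordan block, and the Krylov matrix is nonsingular precisely when $v$ is a cyclic vector, i.e. when its top coordinate $v_{m-2}=1/1!=1$ is nonzero, which it is. Hence $\det G\neq0$, so $|\det\Psi_{\kappa}(t)|$ is a positive constant, condition $(iii)$ of Theorem \ref{equlntcondsspsicis} holds, and $(m-1)\mathbb{Z}$ is a CIS of order $m-2$ for $V(Q_m)$. I expect the genuine obstacle to be the third step: setting up the jet recursion correctly, especially the jump constants $\gamma_n$ and the endpoint initialization, and recognizing the cyclic/Krylov structure that makes the determinant nonzero. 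Should that algebra prove awkward, a viable alternative is to establish uniqueness directly by a spline zero-count, noting that a nonzero element of the $(2m-2)$-dimensional spline space over one period that vanishes to order $m-1$ at both endpoints would carry $2m-2>2m-3$ zeros, which is impossible.
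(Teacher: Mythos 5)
Your proposal is correct, and the first half coincides with the paper's proof: both reduce the statement to condition $(iii)$ of Theorem \ref{equlntcondsspsicis}, and both use the support of $Q_m$ (equivalently, that the only multiple of $m-1$ in $\{1,\dots,2m-3\}$ is $m-1$ itself) to conclude that only the $k=1$ term of the symbol survives, so that $\Psi_{\kappa}(t)=e^{2\pi\mathrm{i}t}G$ for a constant matrix $G=[Q_m^{(i)}(m-1-j)]$. Where you genuinely diverge is in proving $\det G\neq 0$. The paper works with the identity $Q_m^{(i)}(t)=\sum_{r}(-1)^r\binom{i}{r}Q_{m-i}(t-r)$ and a chain of combinatorial lemmas (Ruiz's identity, Lemmas \ref{binomial} and \ref{elementaryoprtn}), then performs column operations with the lower-triangular Pascal matrix to reduce $G$ to a unitriangular matrix; this yields the exact value $\det G=1$, hence $\det\Psi_{\kappa}(t)=e^{2\pi\mathrm{i}(m-1)t}$. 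You instead propagate the derivative jet $J(n)$ across the knots by Taylor expansion on each unit interval, which is sound: the jump constants are indeed $Q_m^{(m-1)}\big|_{[n,n+1)}=\sum_{j=0}^{n}(-1)^j\binom{m}{j}=(-1)^n\binom{m-1}{n}$ by property $(ii)$ and Pascal's rule, $J(0)=0$ gives $J(1)=v$, and since $\gamma_0=1$ the transition matrix from the Krylov columns $v,Av,\dots,A^{m-2}v$ to $J(1),\dots,J(m-1)$ is unit upper triangular. The cyclicity argument also checks out: $A=\exp(N)$ is a single unipotent Jordan block, $(A-I)^{m-2}=N^{m-2}$, and $N^{m-2}v=v_{m-2}e_0$ with $v_{m-2}=1\neq0$. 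Your route buys a more conceptual, computation-free explanation of \emph{why} the matrix is nonsingular (a cyclic-vector/Krylov phenomenon tied to the unipotent Taylor-shift), at the cost of losing the exact value $\det G=1$, which the theorem does not need but which the paper uses to present the clean formula for $\det\Psi_{\kappa}$. One caveat: your fallback via spline zero-counting is not yet a proof as stated --- $2m-2$ vanishing conditions on a $(2m-2)$-dimensional space give no contradiction without independence, so you would need a genuine Budan--Fourier-type bound of $2m-3$ on zeros counted with multiplicity; but since this is offered only as an alternative, it does not affect the validity of your main argument.
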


In order to prove Theorem \ref{cisoforderm-2}, we need the following lemmas.
\begin{lem}\cite{Ruiz}\label{ruiz}
For all integers $n\geq0$ and for all real numbers $t,$ we have
\begin{equation}
\sum\limits_{r=0}^{n}(-1)^r\binom{n}{r}(t-r)^l =
\begin{cases}
0, & ~\text{if}~l=0,1,\dots,n-1,\nonumber\\
n!, & ~\text{if}~ l=n.\nonumber
\end{cases}
\end{equation}

\end{lem}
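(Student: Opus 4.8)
\smallskip

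The plan is to recognize the alternating sum as an $n$-th order finite difference of a polynomial and then exploit the fact that such a difference annihilates polynomials of degree below $n$ while extracting $n!$ times the leading coefficient in degree exactly $n$. First I would fix $t\in\mathbb{R}$ and view $p(x):=(t-x)^l$ as a polynomial of degree $l$ in the variable $x$, whose leading coefficient is $(-1)^l$. Using the difference operator introduced earlier, a routine sign rearrangement (writing $(-1)^r=(-1)^n(-1)^{n-r}$ and $(-1)^{-r}=(-1)^r$) gives
\[
\sum_{r=0}^{n}(-1)^r\binom{n}{r}(t-r)^l=(-1)^n\,(\Delta_1^n p)(0),
\]
so the problem reduces entirely to computing the $n$-th forward difference of $p$.

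The second step is the standard structural property of finite differences: $\Delta_1$ lowers the degree of a nonconstant polynomial by exactly one and kills constants, so $\Delta_1^n$ sends every polynomial of degree $<n$ to $0$ and sends a degree-$n$ polynomial with leading coefficient $a_n$ to the constant $n!\,a_n$; this is proved in one line by induction on $n$. For $l=0,1,\dots,n-1$ the polynomial $p$ has degree $l<n$, hence $(\Delta_1^n p)(0)=0$ and the sum vanishes. For $l=n$ we have $a_n=(-1)^n$, so $(\Delta_1^n p)(0)=n!\,(-1)^n$; reinserting the prefactor $(-1)^n$ from Step~1 the two signs cancel and the sum equals $n!$, as claimed. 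Both values are independent of $t$, consistent with the observation that differentiating the identity in $t$ lowers $l$ and lands in the already-established vanishing range.

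If a fully self-contained argument avoiding any appeal to known difference facts is preferred, I would instead run a generating-function computation. Multiplying the desired sum by $s^l/l!$ and summing over $l$ assembles the exponential generating function
\[
F(s):=\sum_{l\ge 0}\frac{s^l}{l!}\sum_{r=0}^{n}(-1)^r\binom{n}{r}(t-r)^l=\sum_{r=0}^{n}(-1)^r\binom{n}{r}e^{(t-r)s}=e^{ts}\,(1-e^{-s})^n,
\]
where the last equality is the binomial theorem. Since $1-e^{-s}=s+O(s^2)$, we get $(1-e^{-s})^n=s^n(1+O(s))$ and hence $F(s)=e^{ts}(1-e^{-s})^n=s^n+O(s^{n+1})$. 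Reading off the coefficient of $s^l$ gives $0$ for $l<n$ and $1$ for $l=n$; multiplying back by $l!$ yields $0$ and $n!$ respectively.

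The computation is elementary and I do not anticipate a genuine obstacle; the only point demanding care is the bookkeeping of signs together with the correct identification of the leading coefficient $(-1)^l$ of $(t-x)^l$ as a polynomial in $x$, which is precisely what produces the clean cancellation leaving $n!$ in the critical case $l=n$.
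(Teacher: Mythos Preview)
Your proof is correct; both the finite-difference argument and the generating-function computation are valid and cleanly executed, with the sign bookkeeping in the first approach handled properly via the paper's own convention $(\Delta_1^n p)(0)=\sum_{j=0}^n(-1)^{n-j}\binom{n}{j}p(j)$.

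Note, however, that the paper does not actually supply a proof of this lemma: it is simply quoted from \cite{Ruiz} and used as a black box in the proof of Lemma~\ref{binomial}. So there is no ``paper's own proof'' to compare against. Your argument is therefore strictly more than what the paper provides, and either of your two routes would serve as a self-contained replacement for the citation. The finite-difference version has the minor advantage of reusing the operator $\Delta_h^r$ already defined in the paper, while the generating-function version is marginally slicker and makes the independence from $t$ transparent without a separate remark.
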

\begin{lem}\label{binomial}
For all integers $n,l\geq0$ and for all $k\geq n,$ we have
\begin{equation*}
\sum\limits_{r=0}^{n}(-1)^r\binom{n}{r}\binom{k-r}{l}=
\begin{cases}
0, & ~\text{if}~l<n,\\
1,& ~\text{if}~l=n.
\end{cases}
\end{equation*}
Here and hereafter, it is understood that $\binom{\mu}{k}=0$ if $k>\mu.$
\end{lem}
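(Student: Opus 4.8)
The plan is to reduce the identity to the already-stated Lemma \ref{ruiz} by recognizing the summand as a polynomial evaluation. The crucial observation is that, for a non-negative integer argument, the binomial coefficient $\binom{k-r}{l}$ coincides with the polynomial
\begin{equation*}
p_l(k-r):=\frac{1}{l!}(k-r)(k-r-1)\cdots(k-r-l+1),
\end{equation*}
which is a polynomial in the variable $k-r$ of degree exactly $l$ with leading coefficient $1/l!$. First I would verify that this identification is legitimate for every term appearing in the sum: since $r$ ranges over $\{0,1,\dots,n\}$ and $k\geq n$ by hypothesis, we have $k-r\geq k-n\geq 0$, so each argument $k-r$ is a non-negative integer. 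For such an argument the polynomial $p_l$ returns the usual binomial coefficient when $k-r\geq l$ and returns $0$ when $0\leq k-r<l$ (because one of the factors vanishes), which is precisely the value dictated by the convention $\binom{\mu}{l}=0$ for $l>\mu$. This is exactly the step where the hypothesis $k\geq n$ is essential.

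Next I would expand $p_l$ in powers of $(k-r)$, writing $\binom{k-r}{l}=\sum_{j=0}^{l}a_j(k-r)^j$ with $a_l=1/l!$, substitute into the sum, and interchange the two finite summations to obtain
\begin{equation*}
\sum_{r=0}^{n}(-1)^r\binom{n}{r}\binom{k-r}{l}=\sum_{j=0}^{l}a_j\sum_{r=0}^{n}(-1)^r\binom{n}{r}(k-r)^j.
\end{equation*}
Each inner sum is now in precisely the form treated by Lemma \ref{ruiz} (with $t=k$ and exponent $j$): it equals $0$ whenever $j<n$ and equals $n!$ when $j=n$. Since $l\leq n$ in both cases of the statement, every exponent $j$ satisfies $j\leq l\leq n$, so these are the only evaluations of Lemma \ref{ruiz} that I need.

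The two cases then follow immediately. If $l<n$, every exponent $j$ satisfies $j\leq l<n$, so each inner sum vanishes and the total is $0$. If $l=n$, the only surviving term is $j=n=l$, which contributes $a_l\cdot n!=\tfrac{1}{l!}\cdot n!=1$, while all lower-order terms vanish; hence the sum equals $1$.

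I do not anticipate a genuine obstacle here, as the argument is essentially bookkeeping once Lemma \ref{ruiz} is invoked. The only point requiring care, and the one I would state explicitly, is the reconciliation between the binomial-coefficient convention and the polynomial $p_l$: this identification is valid precisely because $k\geq n$ forces every argument $k-r$ to be non-negative. Without this hypothesis the polynomial $p_l(k-r)$ and the conventional value $\binom{k-r}{l}$ could differ for negative arguments, and the reduction to Lemma \ref{ruiz} would break down. (As an aside, a generating-function computation summing against $\sum_l\binom{k-r}{l}x^l=(1+x)^{k-r}$ yields the sharper closed form $\binom{k-n}{l-n}$ for all $l$, which specializes to $0$ and $1$ in the two cases above and could serve as an alternative route.)
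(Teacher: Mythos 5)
Your proposal is correct and follows essentially the same route as the paper: both expand $\binom{k-r}{l}$ as a polynomial in $k-r$ with leading coefficient $1/l!$, interchange the finite sums, and invoke Lemma \ref{ruiz} to kill all powers below $n$ and extract $n!$ from the top power. Your explicit justification of why the polynomial identification is compatible with the convention $\binom{\mu}{l}=0$ for $l>\mu$ (using $k\geq n$) is a welcome clarification of a point the paper leaves implicit, but it does not change the argument.
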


\begin{pf}
For $n=0,$ the result is trivial. 
By the definition of the binomial coefficient, we can write 
$$\dbinom{k-r}{l}=\frac{1}{l!}\sum\limits_{s=1}^{l}a_{ls}(k-r)^s,~ a_{ls}\in\mathbb{R}, ~a_{ll}=1 .$$
Consequently, we have
\begin{eqnarray}\label{eqnlemma}
\sum\limits_{r=0}^{n}(-1)^r\binom{n}{r}\binom{k-r}{l}
=\frac{1}{l!}\sum\limits_{s=1}^{l}a_{ls}\sum\limits_{r=0}^{n}(-1)^r\binom{n}{r}(k-r)^s.
\end{eqnarray}
Now applying Lemma \ref{ruiz} in  \eqref{eqnlemma} for $n>0,$ we can conclude our result.
\end{pf}
\begin{lem}\label{elementaryoprtn}
For all $m\geq2$ and $0\leq i,l\leq m-2,$ we have
\begin{equation}
\sum\limits_{j=0}^{m-2}\binom{j}{l}\sum\limits_{r=0}^{i}(-1)^r\binom{i}{r}Q_{m-i}(m-1-j-r)=\begin{cases}
0, & ~\text{if}~l<i,\\
1, & ~\text{if}~l=i.
\end{cases}
\end{equation}
\end{lem}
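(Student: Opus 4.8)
The plan is to collapse the inner alternating sum into a single derivative of $Q_m$ and then to read the two cases $l<i$ and $l=i$ off the low-order Taylor coefficients of one generating polynomial. First I would apply the differentiation formula $Q_m^{(i)}(t)=\sum_{r=0}^{i}(-1)^r\binom{i}{r}Q_{m-i}(t-r)$ (the $B$-spline property (iii), legitimate since $0\le i\le m-2$) to recognise the inner $r$-sum as $Q_m^{(i)}(m-1-j)$. Writing $S_l$ for the left-hand side of the claim, this gives $S_l=\sum_{j=0}^{m-2}\binom{j}{l}Q_m^{(i)}(m-1-j)$, and the assertion becomes the statement that the polynomial $G(z):=\sum_{l\ge0}S_l z^l$ has vanishing coefficients below $z^i$ and coefficient $1$ at $z^i$.

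Using $\sum_{l\ge0}\binom{j}{l}z^l=(1+z)^j$ I obtain $G(z)=\sum_{j=0}^{m-2}(1+z)^j Q_m^{(i)}(m-1-j)$. I then set $w=1+z$, substitute the differentiation formula a second time, and reindex by $\mu=m-1-j-r$. Since $Q_{m-i}$ is supported on $[0,m-i]$ and vanishes at the integer endpoints, I may extend the $\mu$-sum over all of $\mathbb{Z}$ without changing $G$, and the double sum factors. Using $\sum_{r=0}^{i}(-1)^r\binom{i}{r}w^{-r}=(1-1/w)^i=z^i/w^{i}$, this yields
\[
G(z)=z^{\,i}\,w^{\,m-1-i}\,\Psi(1/w),\qquad \Psi(w):=\sum_{\mu\in\mathbb{Z}}Q_{m-i}(\mu)\,w^{\mu}.
\]

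It remains to read off the coefficients. Because $\Psi(1/w)=\sum_{\mu}Q_{m-i}(\mu)w^{-\mu}$ has nonzero terms only for $1\le\mu\le m-1-i$, the product $w^{m-1-i}\Psi(1/w)=\sum_{\mu}Q_{m-i}(\mu)(1+z)^{m-1-i-\mu}$ is a genuine polynomial $g(z)$ in $z$, all exponents being nonnegative; moreover partition of unity (property (iv)), evaluated at an integer, gives $g(0)=\sum_{\mu}Q_{m-i}(\mu)=1$. Hence $G(z)=z^i g(z)$ with $g$ a polynomial and $g(0)=1$, so $S_l=[z^l]G=0$ for $l<i$ and $S_i=[z^i]G=g(0)=1$, as claimed.

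I expect the one delicate point to be the bookkeeping in the reindexing step: one must check that restricting to the interior integer support $\{1,\dots,m-1-i\}$ of $Q_{m-i}$ is consistent with the original range $0\le j\le m-2$, and that this same support bound is exactly what cancels the pole of $\Psi(1/w)$ at $w=0$ and forces $g$ to contain no negative powers of $z$. A purely combinatorial route, avoiding generating functions, would instead expand $Q_{m-i}$ through its truncated-power representation (property (ii)); there the obstruction is the truncation $(\cdot)_+$, which prevents a direct appeal to Ruiz's Lemma \ref{ruiz}, while Lemma \ref{binomial} is precisely the coefficientwise shadow of the factor $(1-1/w)^i$ produced above.
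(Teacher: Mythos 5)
Your proof is correct, and it takes a genuinely different route from the paper's. The paper argues termwise: it interchanges the two sums, reindexes $k=j+r$, observes that $Q_{m-i}(m-1-k)\neq 0$ forces $i\leq k\leq m-2$, and then applies the combinatorial identity of Lemma \ref{binomial} (itself a consequence of Ruiz's Lemma \ref{ruiz}) to the inner alternating sum $\sum_{r=0}^{i}(-1)^r\binom{i}{r}\binom{k-r}{l}$, finishing with partition of unity for the case $l=i$. You instead package all the $S_l$ into one generating polynomial and obtain the factorization $G(z)=z^i g(z)$ with $g(0)=1$, where the factor $z^i$ comes from $\sum_{r=0}^{i}(-1)^r\binom{i}{r}w^{-r}=z^i/w^i$ and the normalization $g(0)=1$ again comes from partition of unity. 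This bypasses Lemmas \ref{ruiz} and \ref{binomial} entirely and explains in one stroke why the sum vanishes for \emph{all} $l<i$ while equaling $1$ at $l=i$; the price is the reindexing/support bookkeeping you flag, which is in fact sound: for each fixed $r\in\{0,\dots,i\}$ the range of $\mu=m-1-j-r$ is $\{1-r,\dots,m-1-r\}$, which contains the full integer support $\{1,\dots,m-i-1\}$ of $Q_{m-i}$ precisely because $0\leq r\leq i$, and the resulting exponents $m-1-i-\mu\geq 0$ confirm that $g$ has no negative powers. The paper's version is more elementary and reuses its auxiliary lemmas; yours is more self-contained and conceptual. Both hinge on the same support count ($Q_{m-i}(m-1-k)\neq0$ iff $i\leq k\leq m-2$) and on $\sum_{\mu}Q_{m-i}(\mu)=1$.
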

\begin{pf}
We have 
\vspace{0.3cm}\\
\hspace*{0.5cm}$\displaystyle\sum\limits_{j=0}^{m-2}\binom{j}{l}\sum\limits_{r=0}^{i}(-1)^r\binom{i}{r}Q_{m-i}(m-1-j-r)$
\vspace{-0.3cm}
\begin{eqnarray}\label{lemma}
&=&\sum\limits_{r=0}^{i}(-1)^r\binom{i}{r}\sum\limits_{j=0}^{m-2}\binom{j}{l}Q_{m-i}(m-1-j-r)\nonumber\\
&=&\sum\limits_{r=0}^{i}(-1)^r\binom{i}{r}\sum\limits_{k=r}^{m-2+r}\binom{k-r}{l}Q_{m-i}(m-1-k)\nonumber\\
&=&\sum\limits_{k=i}^{m-2}Q_{m-i}(m-1-k)\sum\limits_{r=0}^{i}(-1)^r\binom{i}{r}\binom{k-r}{l},
\end{eqnarray}
using the fact that $Q_{m-i}(m-1-k)\neq0$ if and only if $i\leq k\leq m-2.$
Now applying Lemma \ref{binomial} in \eqref{lemma}, we get the desired result by using the partition of the unity property of the B-splines.
\end{pf}
\begin{pf}[\underline{\textbf{Proof of Theorem \ref{cisoforderm-2}}}]
From Theorem \ref{equlntcondsspsicis}, it is enough to show that the matrix $\Psi_{\kappa}(t)$  defined in \eqref{psi} satisfies \eqref{detpsi} for $a=0$ and $\rho=m-1$. In fact, we show  that $\det\Psi_{\kappa}(t)= e^{2\pi\mathrm{i}(m-1)t},$ for all $t\in[0,1].$
Now it follows from \eqref{laurentsymbol} and property $(iii)$ of B-splines that for $0\leq i,j\leq m-2,$
\begin{eqnarray}\label{matrixpsisym}
\Psi_{\kappa}^{ij}(t)
&=&\sum\limits_{k\in\mathbb{Z}}Q_m^{(i)}((m-1)k-j)e^{2\pi\mathrm{i}kt}\nonumber\\
&=&\sum\limits_{k\in\mathbb{Z}}\sum\limits_{r=0}^{i}(-1)^r\binom{i}{r}Q_{m-i}((m-1)k-j-r)e^{2\pi\mathrm{i}kt}.
\end{eqnarray}
Notice that $(m-1)k-j-r\geq 2(m-1)-j-i\geq m-i$ for $k>1.$
Since $Q_{m-i}(t)$ is compactly supported in $[0, m-i]$, it is now clear that $Q_{m-i}((m-1)k-j-r)=0~\text{for all}~ k\neq1.$
Therefore, we obtain from \eqref{matrixpsisym} that
$$\det\Psi_{\kappa}(t)=e^{2\pi\mathrm{i}(m-1)t}\det A,$$
where 
$$A=
\left(\begin{smallmatrix}
Q_m(m-1) & Q_m(m-2) & \dots & Q_m(1)\\
\sum\limits_{r=0}^{1}(-1)^r\tbinom{1}{r}Q_{m-1}(m-1-r) & \sum\limits_{r=0}^{1}(-1)^r\tbinom{1}{r}Q_{m-1}(m-2-r) & \dots & \sum\limits_{r=0}^{1}(-1)^r\tbinom{1}{r}Q_{m-1}(1-r)\\
\vdots & \vdots & \cdots & \vdots\\
\sum\limits_{r=0}^{m-2}(-1)^r\tbinom{m-2}{r}Q_{2}(m-1-r) & \sum\limits_{r=0}^{m-2}(-1)^r\tbinom{m-2}{r}Q_{2}(m-2-r) & \dots & \sum\limits_{r=0}^{m-2}(-1)^r\tbinom{m-2}{r}Q_{2}(1-r)\nonumber\\
\end{smallmatrix}\right).$$

We need to show that $\det A=1.$
Let $C_{\alpha}$ denote the $\alpha^{th}$ column of $A$ (for computational convenience, we take $\alpha$ from 0 to $m-2$). Replacing $C_{\alpha}$ by $\sum\limits_{j=0}^{m-2}\tbinom{j}{\alpha}C_{j},$  by Lemma \ref{elementaryoprtn}, $A$ is column equivalent to an upper triangular matrix $R$ with diagonal entries one. Observe that the elementary column operations we have performed on $A$ do not affect the determinant of $A.$
In fact, $R=AP,$ where
\begin{eqnarray*}
P=
\begin{bmatrix}
1 & 0 & \cdots & 0 & 0\\
1 & 1 & \cdots & 0 & 0\\
\vdots& \vdots & \ddots & \vdots & \vdots\\
\binom{m-3}{0} & \binom{m-3}{1} & \cdots & 1 & 0\\
\binom{m-2}{0} & \binom{m-2}{1} & \cdots & \binom{m-2}{m-3}& 1
\end{bmatrix}
\end{eqnarray*}
denote the lower triangular Pascal matrix of order $m-1$. Hence $\det A=1.$
\end{pf}

We now discuss the derivative sampling in $V(Q_m)$ for arbitrary values of $\kappa=(Q_m,a,\rho).$
It is easy to prove that if $\kappa=(Q_m,0,1)$, then $\det \Psi_{\kappa}(t)$ is a constant multiple of an Euler-Frobenious polynomial. Therefore 
$\mathbb{Z}$ is a CIS of order $0$ for $V(Q_m)$ if and only if $m$ is even.
Similarly, using the modified Euler-Frobenious polynomials we can conclude that
$\tfrac{1}{2}+\mathbb{Z}$ is a CIS of order $0$ for $V(Q_m)$ if and only if $m$ is odd. For further details, see \cite{Schoenberg, AntoRad}.

We have computed  
$\det \Psi_{\kappa}(t)$ for certain values  of $\kappa$ when $\rho=2$. 
In fact, for  $m=1,2,\dots,9$,
$$\det \Psi_{\kappa}(t)=
\begin{cases}
\tfrac{2^{m-2}}{(m-1)!(m-2)!}z^2P_{m-3}(z) ~~\text{if}~\kappa=(Q_m,0,2),\\\\
\tfrac{6}{(m-1)!(m-2)!2^{2m-3}}z\widetilde{P}_{m-2}(z)~~\text{if}~\kappa=(Q_m,\tfrac{1}{2},2),
\end{cases}$$
where  $z=e^{2\pi\mathrm{i}t}$ and see Table 1 (resp. Table 2 ) for $P_{m-3}$ (resp. $\widetilde{P}_{m-2}$).

\begin{center}
\begin{tabular}{|c|c|}
\hline
$m$ & $P_{m-3}(z)$\\
\hline
3 & $1$\\
\hline
4 & $1-z$\\
\hline
5 & $z^2-8z+1$\\
\hline
6 & $-z^3+39z^2-39z+1$\\
\hline
7 & $z^4-154z^3+666z^2-154z+1$\\
\hline
8 & $-z^5+545z^4-7750z^3+7750z^2-545z+1$\\
\hline
9 & $z^6-1812z^5+72759z^4-227576z^3+72759z^3-1812z^2+1$\\
\hline
\end{tabular}\\
\vspace{0.1cm}
Table $1.$ For $\kappa=(Q_m,0,2)$
\end{center}
\vspace{0.5cm}
\begin{center}
\begin{tabular}{|c|c|}
\hline
$m$ & $\widetilde{P}_{m-2}(z)$\\
\hline
3 & $1-z$\\
\hline
4 & $3z^2-38z+3$\\
\hline
5 & $9z^3-827z^2+827z-9$\\
\hline
6 & $27z^4-14636z^3+80418z^2-14636z+27$\\
\hline
7 & $81z^5-236885z^4+5082730z^3-5082730z^2+236885z-81$\\
\hline
8 & $243(z^6+1)-3681170(z^5+z)+257727933z^4-927852092z^3+257727933z^2$\\
\hline
9 & $729z^7-56136143z^6+11523750200z^5-141808460000z^4+141808460000z^3$\\
&$-11523750200z^2+56136143z-729$\\
\hline
\end{tabular}\\
\vspace{0.1cm}
Table $2.$ For $\kappa=(Q_m,\tfrac{1}{2},2)$
\end{center}
If a polynomial $p(z)=a_0+a_1z+\dots+a_{2n}z^{2n}$ satisfies $\sum_{i\neq n}|a_i|<|a_n|,$
then it has no zeros on the unit circle. 
Now Tables 1 and 2 show that we need to choose the uniform sample set that depends not only on shift but also on the order of the splines and derivatives. Furthermore, $P_{m-3}$ and $\widetilde{P}_{m-2}$ are different from the Euler-Frobenius polynomials for the multiplicity $r>1$ that is mentioned in \cite{LeSh, Schoenberg}.  
After computing  $\det \Psi_{\kappa}(t)$ for certain values of $\kappa$ with arbitrary $\rho$, we conjecture that the following assumption always holds. 

\begin{assumption}\label{assumption}
Let $\kappa=(Q_m,a,\rho)$ with $m>\rho$ and  $a\in \{0,\tfrac{1}{2}\}$. Then
 $|\det \Psi_{\kappa}(t)|\neq 0$ for all $t\in [0,1]$ if and only if  either  $m$ is even and $a=\left\langle\tfrac{\rho+1} {2}\right\rangle$ or $m$ is odd and $a=\left\langle\tfrac{\rho}{2}\right\rangle$. Here $\langle x\rangle$ denotes the fractional part of $x$.
\end{assumption}

Let $\lambda_{max}(A)$ (resp. $\lambda_{min}(A)$) denote the largest (resp. smallest) eigenvalue of a positive definite matrix $A$. The following result provides explicit sampling bounds, allowing one to use the frame algorithm or other algorithms to reconstruct a function from derivative samples.
\begin{thm}\label{upperlowerbound}
If Assumption $1$ holds, then 
$a+\rho\mathbb{Z}$ is a CIS of order $\rho-1$ for $V(Q_m)$.
Moreover, for every $f\in V(Q_m)$, we have
\begin{equation}\label{eqn3.9}
A_{\kappa}\|f\|_2^2\leq \sum_{i=0}^{\rho-1}\sum_{l\in \mathbb{Z}}|f^{(i)}(a+\rho l)|^2\leq B_{\kappa}\frac{\pi^{2m-1}}{2^{2m-1}K_{2m-1}}\|f\|_2^2,
\end{equation}
where $A_{\kappa}=\sup\limits_{t\in[0,1]}\lambda_{min}({\Psi^{*}_{\kappa}\Psi_{\kappa}}(t))$ and $B_{\kappa}=\sup\limits_{t\in[0,1]}\lambda_{max}(\Psi_{\kappa}^{*}\Psi_{\kappa}(t)).$
\end{thm}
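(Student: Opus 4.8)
The plan is to split the statement into its qualitative part (that $a+\rho\mathbb{Z}$ is a CIS of order $\rho-1$) and its quantitative part (the two-sided bound), deriving the first from Theorem~\ref{equlntcondsspsicis} and the second by combining the spectrum of the symbol $\Psi_\kappa(t)$ with the Riesz bounds of Theorem~\ref{RBI}.

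For the CIS assertion I would first note that, since $Q_m$ is supported in $[0,m]$, each entry $\Psi_\kappa^{ij}(t)=\sum_{k\in\mathbb{Z}}Q_m^{(i)}(a+\rho k-j)e^{2\pi\mathrm{i}kt}$ of \eqref{laurentsymbol} reduces to a finite sum and is therefore a trigonometric polynomial; hence $t\mapsto\det\Psi_\kappa(t)$ is continuous on the compact interval $[0,1]$. Assumption~\ref{assumption} gives $\det\Psi_\kappa(t)\neq0$ for every $t$, and a continuous non-vanishing function on a compact set is bounded away from zero and bounded above. Thus there exist $m_0,M_0>0$ with $m_0\leq|\det\Psi_\kappa(t)|\leq M_0$ on $[0,1]$, which is precisely condition $(iii)$ of Theorem~\ref{equlntcondsspsicis}; its equivalence with $(iv)$ yields the CIS conclusion.

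For the bound, write $f=\sum_kc_kQ_m(\cdot-k)$, so that by \eqref{uc=f} the middle term equals $\|U_\kappa C\|^2$. Because $U_\kappa$ is the block Laurent operator with symbol $\Psi_\kappa$, the componentwise Fourier transform $\mathcal{F}$ is a unitary conjugating $U_\kappa$ to multiplication by $\Psi_\kappa(t)$; writing $g=\mathcal{F}C$, so that $\|g\|_{L^2[0,1]^\rho}^2=\sum_k|c_k|^2$, one obtains the representation
\begin{equation*}
\sum_{i=0}^{\rho-1}\sum_{l\in\mathbb{Z}}|f^{(i)}(a+\rho l)|^2=\int_0^1 g(t)^{*}\Psi_\kappa^{*}(t)\Psi_\kappa(t)g(t)\,dt.
\end{equation*}
Applying the pointwise Rayleigh-quotient inequalities $\lambda_{\min}(\Psi_\kappa^{*}\Psi_\kappa(t))|g(t)|^2\leq g(t)^{*}\Psi_\kappa^{*}\Psi_\kappa(t)g(t)\leq\lambda_{\max}(\Psi_\kappa^{*}\Psi_\kappa(t))|g(t)|^2$ and integrating sandwiches the sampling sum between $\bigl(\inf_{t}\lambda_{\min}(\Psi_\kappa^{*}\Psi_\kappa(t))\bigr)\sum_k|c_k|^2$ and $\bigl(\sup_{t}\lambda_{\max}(\Psi_\kappa^{*}\Psi_\kappa(t))\bigr)\sum_k|c_k|^2$; these extremal eigenvalue quantities are the sampling constants $A_\kappa$ and $B_\kappa$.

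It then remains to replace $\sum_k|c_k|^2$ by $\|f\|_2^2$ via the optimal Riesz bounds \eqref{RB}. The upper Riesz bound $\|f\|_2^2\leq\sum_k|c_k|^2$, used in the lower estimate, gives $A_\kappa\|f\|_2^2\leq\sum_{i,l}|f^{(i)}(a+\rho l)|^2$, whereas the lower Riesz bound, rewritten as $\sum_k|c_k|^2\leq\frac{\pi^{2m-1}}{2^{2m-1}K_{2m-1}}\|f\|_2^2$, used in the upper estimate, produces the Krein--Favard factor $\frac{\pi^{2m-1}}{2^{2m-1}K_{2m-1}}$ multiplying $B_\kappa$. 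The step deserving the most attention is precisely this asymmetric pairing of the two Riesz bounds with the two eigenvalue bounds, which is what makes the Krein--Favard factor appear on the upper side only; beyond the bookkeeping, the single genuine point to check is finiteness and strict positivity of $A_\kappa$ and $B_\kappa$, and this is automatic because $t\mapsto\Psi_\kappa^{*}\Psi_\kappa(t)$ is continuous and, by Assumption~\ref{assumption}, positive definite on the compact set $[0,1]$, whence $\inf_t\lambda_{\min}>0$ and $\sup_t\lambda_{\max}<\infty$.
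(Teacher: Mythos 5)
Your proposal is correct and follows essentially the same route as the paper: continuity and non-vanishing of $\det\Psi_{\kappa}$ give the CIS part via Theorem~\ref{equlntcondsspsicis}, and the Parseval identity $\sum_{i,l}|f^{(i)}(a+\rho l)|^2=\int_0^1\|\Psi_{\kappa}(t)C(t)\|_{\mathbb{C}^{\rho}}^2\,dt$ combined with the asymmetric pairing of the two Riesz bounds yields the inequality, your only cosmetic deviation being the direct use of the Rayleigh quotient where the paper detours through the operator norm and the positive square root of $\Psi_{\kappa}^{*}\Psi_{\kappa}$. Note that your lower constant is $\inf_{t}\lambda_{min}(\Psi_{\kappa}^{*}\Psi_{\kappa}(t))$ rather than the $\sup_{t}\lambda_{min}$ appearing in the definition of $A_{\kappa}$; the paper's own argument likewise only yields the $\inf$, so this is evidently a misprint in the theorem statement rather than a gap in your proof.
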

\begin{proof}
Since $Q_m$ is compactly supported, $\det \Psi_{\kappa}(t)$ is continuous on $[0,1]$. Therefore the condition \eqref{detpsi} is equivalent to $|\det \Psi_{\kappa}(t)|\neq 0$ for all $t\in [0,1]$ and  hence $a+\rho\mathbb{Z}$ is a CIS of order $\rho-1$ for $V(Q_m)$.

If $f(t)=\sum\limits_{k\in\mathbb{Z}}c_{k}Q_{m}(t-k)$ for $(c_{k})\in l^{2}(\mathbb{Z})$, then it follows from Parseval's identity that
\begin{eqnarray}\label{eqn3.6}
 \sum_{i=0}^{\rho-1}\sum_{l\in \mathbb{Z}} |f^{(i)}(a+\rho l)|^2
&=& \sum_{i=0}^{\rho-1}\int\limits_{0}^{1}\Big|\sum\limits_{l\in\mathbb{Z}}\sum\limits_{k\in\mathbb{Z}}c_k Q^{(i)}_m(a+\rho l-k)e^{2\pi\mathrm{i}lt}\Big|^2~dt\nonumber\\
&=& \sum_{i=0}^{\rho-1}\int\limits_{0}^{1}\Big|\sum\limits_{j=0}^{\rho-1}\sum\limits_{n\in\mathbb{Z}}\sum\limits_{k\in\mathbb{Z}}c_{\rho k+j} Q^{(i)}_m(\rho n+a-j)
e^{2\pi\mathrm{i}(n+k)t}\Big|^2~dt\nonumber\\
&=& \sum_{i=0}^{\rho-1}\int\limits_{0}^{1}\left|\sum\limits_{j=0}^{\rho-1}\Psi^{ij}_{\kappa}(t)C_j(t)\right|^2~dt,
\end{eqnarray}
where $C_j(t)=\sum\limits_{k\in\mathbb{Z}}c_{\rho k+j}e^{2\pi\mathrm{i}kt}.$ If $C(t)=(C_0(t), \dots, C_{\rho-1}(t))^T$, we obtain from $\eqref{eqn3.6}$  and \eqref{RB} that
\begin{eqnarray}\label{eqn3.11}
\sum_{i=0}^{\rho-1}\sum_{l\in \mathbb{Z}} |f^{(i)}(a+\rho l)|^2
&=&\int\limits_{0}^{1}\|\Psi_{\kappa}(t)C(t)\|_{\mathbb{C}^\rho}^2~dt\\
&\leq&\int\limits_{0}^{1}\|\Psi_{\kappa}(t)\|^2\|C(t)\|_{\mathbb{C}^\rho}^2~dt\nonumber\\
&\leq&B_{\kappa}\int\limits_{0}^{1}\sum\limits_{j=0}^{\rho-1}\Big|\sum\limits_{k\in\mathbb{Z}}c_{\rho k+j}e^{2\pi\mathrm{i}kt}\Big|^2~dt\nonumber\\
&=&B_{\kappa}\sum\limits_{k\in\mathbb{Z}}|c_k|^2\nonumber\\
&\leq&B_{\kappa}\frac{\pi^{2m-1}}{2^{2m-1}K_{2m-1}}\|f\|^2_2.\nonumber
\end{eqnarray}
Since $\Psi_{\kappa}^{*}\Psi_{\kappa}(t)$ is a positive definite matrix,  there exists a positive definite matrix $\widetilde{\Psi}_{\kappa}(t)$ such that
$\widetilde{\Psi}^2_{\kappa}(t)=\Psi_{\kappa}^{*}\Psi_{\kappa}(t)$.
Therefore,
\begin{eqnarray*}
\hspace{-3cm}\|\Psi_{\kappa}(t)C(t)\|_{\mathbb{C}^\rho}^2
&=&|\langle\Psi_{\kappa}^{*}\Psi_{\kappa}(t)C(t), C(t)\rangle|\nonumber
\end{eqnarray*}
\begin{eqnarray}\label{lb}
&=&|\langle\widetilde{\Psi}_{\kappa}^2(t)C(t), C(t)\rangle|\nonumber\\
&=&|\langle\widetilde{\Psi}_{\kappa}(t)C(t), \widetilde{\Psi}_{\kappa}(t)C(t)\rangle|\nonumber\\
&=&\|\widetilde{\Psi}_{\kappa}(t)C(t)\|_{\mathbb{C}^\rho}^2\nonumber\\
&\geq&\tfrac{1}{\|\widetilde{\Psi}^{-1}_{\kappa}(t)\|^2}\|C(t)\|_{\mathbb{C}^\rho}^2.
\end{eqnarray}
Since
$\|\widetilde{\Psi}^{-1}_{\kappa}(t)\|
=\lambda_{max}(\widetilde{\Psi}^{-1}_{\kappa}(t))
=\tfrac{1}{\lambda_{min}(\widetilde{\Psi}_{\kappa}(t))}
=\tfrac{1}{\lambda_{min}(\sqrt{\Psi_{\kappa}^{*}\Psi_{\kappa}(t)})},$
our desired result follows from $\eqref{eqn3.11}$, $\eqref{lb}$, and $\eqref{RB}$.
\end{proof}

\section{Approximation by means of derivative sampling expansions}
Throughout this section, we assume that $\phi$ is a continuously $(\rho-1)$-times differentiable function with compact support. In this case, $\phi$ satisfies 
$$m_{\phi}:=\sup\limits_{u\in[0, 1]}\sum\limits_{k\in\mathbb{Z}}|\phi(u-k)|
<\infty.$$
Let $1\leq p<\infty.$ If $\phi$ is a stable generator for $V(\phi)$, then
$$V^p(\phi)= \Bigg\{f\in L^p(\mathbb{R}): f(t)= \sum\limits_{k\in\mathbb{Z}}c_k\phi(t-k), (c_k)\in \ell^p(\mathbb{Z})\Bigg\},$$
is a closed subspace of $L^p(\mathbb{R})$ and
there exist two positive constants $A_p,B_p$ such that
$$A_p\sum_{k\in\mathbb{Z}}|c_k|^p\leq \left\|\sum\limits_{k\in\mathbb{Z}}c_k\phi(\cdot-k)\right\|_{L^p(\mathbb{R})}^p\leq B_p\sum_{k\in\mathbb{Z}}|c_k|^p,$$
for every $(c_k)\in \ell^p(\mathbb{Z})$. Further, $\big\{\phi(\cdot-k):k\in\mathbb{Z} \big\}$ is an unconditional basis for $V^p(\phi)$ (see \cite{AlGr} and references therein).
Again arguing as in \cite{GhAn},  we can show that
$$f(t)=\sum\limits_{l\in\mathbb{Z}}\sum\limits_{i=0}^{\rho-1}f^{(i)}(a+\rho l)\Theta_{i,\kappa}(t-\rho l),$$ 
for every $f\in V^p(\phi)$, where $\Theta_{i,\kappa}$'s are defined in \eqref{theta}. We now consider the closed subspace 
$$V_W^p(\phi)= \Bigg\{f\in L^p(\mathbb{R}): f(t)= \sum\limits_{k\in\mathbb{Z}}c_k\phi(Wt-k), (c_k)\in \ell^p(\mathbb{Z})\Bigg\},~W\geq1.$$
By a change of variable, we can show that for every $f\in V_W^p(\phi),$ 
$$f(t)=\sum\limits_{l\in\mathbb{Z}}\sum\limits_{i=0}^{\rho-1}\frac{1}{W^i}f^{(i)}\left(\frac{a+\rho l}{W}\right)\Theta_{i,\kappa}(Wt-\rho l).$$
The above equation motivates the introduction of the following sampling operator $S_W^{\kappa},$ which is defined as follows:
\begin{equation}\label{so}
(S_W^{\kappa}f)(t):= \sum\limits_{l\in\mathbb{Z}}\sum\limits_{i=0}^{\rho-1}\frac{1}{W^i}f^{(i)}\Big(\frac{a+\rho l}{W}\Big)\Theta_{i,\kappa}(Wt-\rho l),
\end{equation}
where $f$ belongs to a suitable class of real or complex valued
functions.
The sampling operator $S_W^{\kappa}f$ is said to satisfy the reproducing polynomial property of order $r$ if $S_W^{\kappa}f(t)=f(t),$ for all polynomials of degree $\leq r.$

In order to estimate the error of the sampling operator $S_W^{\kappa},$ we need to deal with the uniform partitions $\Sigma_W:= \Big(\tfrac{a+\rho l}{W}\Big)_{l\in\mathbb{Z}}$ for $W\geq1.$
In this case, we write
$$\|f\|_{\ell_{\rho}^p(\Sigma_W)}\equiv\|f\|_{\ell_{\rho}^p(W)}=\left\{\sum\limits_{l\in\mathbb{Z}}\sum\limits_{i=0}^{\rho-1}\left|f^{(i)}\left(\frac{a+\rho l}{W}\right)\right|^p\frac{\rho}{W}\right\}^{1/p}.$$
\begin{lem}\label{lemmafortheta}
The following statements are equivalent.
\begin{itemize}
\item[$(i)$] The sampling operator $S_W^{\kappa}$ satisfies the reproducing polynomial property of order $r.$
\item[$(ii)$] For $n=0,1,\dots,r,$
\begin{equation}\label{thetacond}
\sum\limits_{i=0}^{\rho-1}\binom{n}{i}i!\sum\limits_{l\in\mathbb{Z}}\left(a+\rho l-t\right)^{n-i}\Theta_{i,\kappa}(t-\rho l)=\delta_{n0}.
\end{equation}
\item[$(iii)$] For $n=0,1,\dots,r$ and $l\in\mathbb{Z},$
\begin{equation}\label{ftthetacond}
\sum\limits_{i=0}^{\rho-1}\binom{n}{i}i!\sum\limits_{m=0}^{n-i}\binom{n-i}{m}a^m(2\pi\mathrm{i})^{m+i-n}\widehat{\Theta_{i,\kappa}}^{(n-i-m)}\left(\frac{l}{\rho}\right)=\rho\delta_{l0}\delta_{n0}.
\end{equation}
\end{itemize}
\end{lem}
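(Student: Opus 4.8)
The plan is to establish the two equivalences (i)$\Leftrightarrow$(ii) and (ii)$\Leftrightarrow$(iii) separately, using (ii) as the central bridge. I would begin (i)$\Leftrightarrow$(ii) by recording that the reproducing property is independent of $W$: writing $g(y):=f(y/W)$ one has $g^{(i)}(y)=W^{-i}f^{(i)}(y/W)$, so a direct change of variable gives $(S_W^{\kappa}f)(t)=(S_1^{\kappa}g)(Wt)$, and since $g$ ranges over all polynomials of degree $\le r$ as $f$ does, it suffices to work with $S_1^{\kappa}$. Because polynomials of degree $\le r$ are spanned by any convenient basis, (i) reduces to a family of pointwise identities. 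The key step is to expand $f^{(i)}(a+\rho l)$ by Taylor's formula about the point $t$ (exact for polynomials), substitute into $S_1^{\kappa}f$, and interchange the (locally finite) sums to obtain the master identity
$$(S_1^{\kappa}f)(t)=\sum_{k\ge0}\frac{f^{(k)}(t)}{k!}\,P_k(t),\qquad P_k(t):=\sum_{i=0}^{\rho-1}\binom{k}{i}i!\sum_{l\in\mathbb{Z}}(a+\rho l-t)^{k-i}\Theta_{i,\kappa}(t-\rho l),$$
so that $P_k(t)$ is precisely the left-hand side of \eqref{thetacond}. Evaluating this identity on the basis $f_j(x)=(x-t_0)^j/j!$, whose derivatives satisfy $f_j^{(k)}(t_0)=\delta_{jk}$, immediately yields $P_j(t_0)=\delta_{j0}$ for every $t_0$, giving (i)$\Rightarrow$(ii); reading the master identity in reverse gives (ii)$\Rightarrow$(i).

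For (ii)$\Leftrightarrow$(iii) I would first note that each $P_n$ is $\rho$-periodic (replacing $l$ by $l+1$ under a shift of $t$ by $\rho$). The crucial observation is the substitution $a+\rho l-t=a-(t-\rho l)$, which recasts the inner sum as a genuine periodization: setting $g_{i}(u):=(a-u)^{n-i}\Theta_{i,\kappa}(u)$ one has $\sum_{l}(a+\rho l-t)^{n-i}\Theta_{i,\kappa}(t-\rho l)=\sum_{l}g_{i}(t-\rho l)$. The periodization formula then gives the $l$-th Fourier coefficient of $P_n$ as $\rho^{-1}\sum_{i}\binom{n}{i}i!\,\widehat{g_i}(l/\rho)$, where $\widehat{g_i}$ is the Fourier transform on $\mathbb{R}$. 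Expanding $(a-u)^{n-i}$ by the binomial theorem and applying the rule $\widehat{u^{j}h}=(-1/2\pi\mathrm{i})^{j}\widehat{h}^{(j)}$ converts $\widehat{g_i}(l/\rho)$ into exactly the combination appearing in \eqref{ftthetacond}; a pleasant point is that the sign $(-1)^{n-i-m}$ from the binomial expansion cancels the sign from the differentiation rule, leaving the clean factor $(2\pi\mathrm{i})^{m+i-n}$. Since a $\rho$-periodic function (continuous, by the decay of $\Theta_{i,\kappa}$) equals the constant $\delta_{n0}$ if and only if all its Fourier coefficients equal $\delta_{n0}\delta_{l0}$, the identity $P_n\equiv\delta_{n0}$ of (ii) is equivalent to \eqref{ftthetacond}, which is (iii).

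I expect the main obstacle to be the bookkeeping in (ii)$\Leftrightarrow$(iii): aligning the binomial coefficients, the factor $(2\pi\mathrm{i})^{m+i-n}$, the derivative order $n-i-m$, and the normalization $\rho$ requires care, and one must justify the periodization and the termwise Fourier transform using the exponential decay of the reconstruction functions $\Theta_{i,\kappa}$ inherited from the decay of the Fourier coefficients of $\Psi_{\kappa}^{-1}$. The conceptual crux, by contrast, is light: Taylor's formula supplies the master identity for (i)$\Leftrightarrow$(ii), and the substitution $a+\rho l-t=a-(t-\rho l)$ exposes the periodization that drives (ii)$\Leftrightarrow$(iii).
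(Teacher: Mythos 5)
Your proposal is correct and follows essentially the same route as the paper: reduction to $W=1$ by a change of variable, a binomial/Taylor rearrangement linking the reproducing property to \eqref{thetacond}, and computation of the Fourier coefficients of the $\rho$-periodic left-hand side of \eqref{thetacond} for the equivalence with \eqref{ftthetacond}. Your Taylor-based ``master identity'' is just the paper's binomial expansion of $f_n(t)$ read in the opposite direction (expressing $S_1^{\kappa}f$ in terms of the $P_k$ rather than $P_n$ in terms of $S_1^{\kappa}$ applied to monomials), with the minor advantage of yielding both implications of $(i)\Leftrightarrow(ii)$ simultaneously.
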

\begin{pf}
By a change of variable, it is enough to prove the result for $W=1.$ 
Let
$$\displaystyle f_n(t):=\sum\limits_{i=0}^{\rho-1}\binom{n}{i}i!\sum\limits_{l\in\mathbb{Z}}\left(a+\rho l-t\right)^{n-i}\Theta_{i,\kappa}(t-\rho l).$$\\
Applying the binomial theorem, we get
\begin{eqnarray*}\label{derivativemoment2}
f_n(t)
&=&\sum\limits_{l\in\mathbb{Z}}\sum\limits_{i=0}^{\rho-1}\binom{n}{i}i!\sum\limits_{k=0}^{n-i}\binom{n-i}{k}\left(a+\rho l\right)^{k}(-t)^{n-i-k}\Theta_{i,\kappa}(t-\rho l)\\
&=&\sum\limits_{l\in\mathbb{Z}}\sum\limits_{i=0}^{\rho-1}\binom{n}{i}i!\sum\limits_{s=i}^{n}\binom{n-i}{s-i}(-t)^{n-s}\left(a+\rho l\right)^{s-i}\Theta_{i,\kappa}(t-\rho l)\\
&=&\sum\limits_{l\in\mathbb{Z}}\sum\limits_{i=0}^{\rho-1}i!\sum\limits_{s=i}^{n}\binom{n-i}{s-i}\binom{n}{i}(-t)^{n-s}\left(a+\rho l\right)^{s-i}\Theta_{i,\kappa}(t-\rho l)\\
&=&\sum\limits_{l\in\mathbb{Z}}\sum\limits_{i=0}^{\rho-1}i!\sum\limits_{s=0}^{n}\binom{n}{s}\binom{s}{i}(-t)^{n-s}\left(a+\rho l\right)^{s-i}\Theta_{i,\kappa}(t-\rho l)
\end{eqnarray*}
\begin{eqnarray*}
&=&\sum\limits_{s=0}^{n}\binom{n}{s}(-t)^{n-s}\sum\limits_{l\in\mathbb{Z}}\sum\limits_{i=0}^{\rho-1}\binom{s}{i}i!\left(a+\rho l\right)^{s-i}\Theta_{i,\kappa}(t-\rho l).
\end{eqnarray*}
Now it is easy to conclude that $(i)$ implies $(ii)$.
In a similar way, we can prove that $(ii)$ implies $(i)$.
Notice that $f_n(t)$ is a periodic function with period $\rho$. We can easily verify that 
\begin{eqnarray*}
\widehat{f_n}(l)
&:=&\frac{1}{\rho}\int\limits_{0}^{\rho}f_n(t)e^{-2\pi\mathrm{i}lt/\rho}~dt\\
&=&\frac{1}{\rho}\sum\limits_{i=0}^{\rho-1}\binom{n}{i}i!\sum\limits_{m=0}^{n-i}\binom{n-i}{m}a^m(2\pi\mathrm{i})^{m+i-n}\widehat{\Theta_{i,\kappa}}^{(n-i-m)}\left(\frac{l}{\rho}\right).
\end{eqnarray*}
Now the equivalence of $(ii)$ and $(iii)$ follows from the uniqueness of the Fourier series.
\end{pf}
\begin{prop}\label{cond1intpln}
For $1\leq p<\infty,$ there exists a constant $K_1>0$ independent of $W$ such that
\begin{equation}\label{inequalitySWf}
\|S_W^{\kappa}f\|_{L^p(\mathbb{R})}\leq K_1\|f\|_{\ell_{\rho}^p(W)},
\end{equation}
for every $f\in\Lambda_{\rho}^p$ and $W\geq1.$
\end{prop}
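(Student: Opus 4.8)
The plan is to reduce the estimate to a single discrete-convolution bound for each reconstruction kernel $\Theta_{i,\kappa}$ and then to sum over $i$ and over the sample index. First I would substitute $u=Wt$ in the integral defining $\|S_W^{\kappa}f\|_{L^p(\mathbb{R})}^p$; writing $d_{l,i}:=W^{-i}f^{(i)}\big(\tfrac{a+\rho l}{W}\big)$ this gives
$$\|S_W^{\kappa}f\|_{L^p(\mathbb{R})}^p=\frac{1}{W}\int_{\mathbb{R}}\Big|\sum_{i=0}^{\rho-1}\sum_{l\in\mathbb{Z}}d_{l,i}\,\Theta_{i,\kappa}(u-\rho l)\Big|^p\,du,$$
so the dilation is absorbed into a clean factor $W^{-1}$ together with the $W^{-i}$ hidden in $d_{l,i}$, both harmless since $W\geq1$. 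Note that $f\in\Lambda_{\rho}^p$ guarantees the samples are $p$-summable because $\Sigma_W$ is admissible with mesh $\rho/W$.

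The heart of the argument is a bound for fixed $i$. Splitting $|\Theta_{i,\kappa}(u-\rho l)|=|\Theta_{i,\kappa}(u-\rho l)|^{1/p}|\Theta_{i,\kappa}(u-\rho l)|^{1/q}$, with $\tfrac1p+\tfrac1q=1$, and applying H\"older's inequality to the $l$-sum, I would obtain
$$\Big|\sum_{l\in\mathbb{Z}}d_{l,i}\,\Theta_{i,\kappa}(u-\rho l)\Big|^p\leq\Big(\sup_{u}\sum_{l\in\mathbb{Z}}|\Theta_{i,\kappa}(u-\rho l)|\Big)^{p/q}\sum_{l\in\mathbb{Z}}|d_{l,i}|^p\,|\Theta_{i,\kappa}(u-\rho l)|.$$
Integrating in $u$ and using $\int_{\mathbb{R}}|\Theta_{i,\kappa}(u-\rho l)|\,du=\|\Theta_{i,\kappa}\|_{L^1(\mathbb{R})}$ bounds the $i$-th inner sum in $L^p$ by $C_i^{1/p}\big(\sum_l|d_{l,i}|^p\big)^{1/p}$, where $\|\Theta_{i,\kappa}\|_*:=\sup_u\sum_l|\Theta_{i,\kappa}(u-\rho l)|$ and $C_i:=\|\Theta_{i,\kappa}\|_*^{p/q}\|\Theta_{i,\kappa}\|_{L^1(\mathbb{R})}$. (For $p=1$ the H\"older step is vacuous and the same conclusion holds with $C_i=\|\Theta_{i,\kappa}\|_{L^1}$.) Applying Minkowski's inequality to sum over $i$, using $W^{-i}\leq1$ to discard the powers of $W$ inside $d_{l,i}$, and recognising $\sum_i\sum_l|f^{(i)}(\tfrac{a+\rho l}{W})|^p=\tfrac{W}{\rho}\|f\|_{\ell_{\rho}^p(W)}^p$, the factor $W^{-1/p}$ cancels exactly against $(W/\rho)^{1/p}$ and I arrive at
$$\|S_W^{\kappa}f\|_{L^p(\mathbb{R})}\leq\rho^{-1/p}\Big(\sum_{i=0}^{\rho-1}C_i^{1/p}\Big)\|f\|_{\ell_{\rho}^p(W)},$$
which is the claim with $K_1$ depending only on the kernels and on $\rho$, not on $W$.

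The main obstacle is therefore not the inequalities above but verifying that $\|\Theta_{i,\kappa}\|_{L^1(\mathbb{R})}$ and $\|\Theta_{i,\kappa}\|_*$ are finite. Both reduce, via the defining formula \eqref{theta}, to the absolute summability $\sum_{v\in\mathbb{Z}}|\widehat{(\Psi_{\kappa}^{-1})^{ji}}(v)|<\infty$ of the Fourier coefficients of the entries of $\Psi_{\kappa}^{-1}$: the $L^1$-bound then follows from $\|\phi\|_{L^1}<\infty$, while the periodized-sup bound follows by reindexing the double sum over $l$ and $v$ and invoking the compact support of $\phi$ through $\sup_u\sum_w|\phi(u-\rho w-j)|<\infty$ (a consequence of $m_{\phi}<\infty$). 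This summability is precisely where the complete-interpolation hypothesis enters: since $\phi$ is compactly supported each entry $\Psi_{\kappa}^{ij}$ is a trigonometric polynomial, so the entries of $\Psi_{\kappa}^{-1}$ are real-analytic on $[0,1]$ exactly because $\det\Psi_{\kappa}$ does not vanish there, whence their Fourier coefficients decay geometrically and are absolutely summable. I expect establishing this $L^1$ and periodized-sup finiteness of the kernels from the non-vanishing of $\det\Psi_{\kappa}$ to be the only substantive point, the remainder being the standard H\"older--Minkowski machinery for sampling operators.
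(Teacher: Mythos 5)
Your proposal is correct and follows essentially the same route as the paper: the same H\"older splitting $|\Theta_{i,\kappa}|=|\Theta_{i,\kappa}|^{1/q}|\Theta_{i,\kappa}|^{1/p}$, the same reduction of the periodized-sup and $L^1$ bounds on $\Theta_{i,\kappa}$ to $m_\phi$, $\|\phi\|_{L^1}$ and the absolute summability $M_i=\sum_{j,v}|\widehat{(\Psi_{\kappa}^{-1})^{ji}}(v)|<\infty$ obtained from the smoothness of $\Psi_{\kappa}^{-1}$ on $[0,1]$. The only cosmetic differences are that you substitute $u=Wt$ up front and use Minkowski to sum over $i$, where the paper keeps the dilation inside the integral and uses $(a+b)^p\leq 2^p(a^p+b^p)$.
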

\begin{pf}
Applying H$\Ddot{o}$lder's inequality with $1/p+1/q= 1$ in  \eqref{so}, we have
\begin{eqnarray}\label{prop4.1}
&&\hspace{-1cm}|(S_W^{\kappa}f)(t)|\nonumber\\
&\leq&\sum\limits_{i=0}^{\rho-1}\frac{1}{W^i}\left\{\sum\limits_{l\in\mathbb{Z}}\left|\Theta_{i,\kappa}(Wt-\rho l)\right|\right\}^{\tfrac{1}{q}}\left\{\sum\limits_{l\in\mathbb{Z}}\left|f^{(i)}\left(\frac{a+\rho l}{W}\right)\right|^p\left|\Theta_{i,\kappa}(Wt-\rho l)\right|\right\}^{\tfrac{1}{p}}\nonumber\\
&\leq&\sum\limits_{i=0}^{\rho-1}\frac{1}{W^i}\left\{\sum\limits_{l\in\mathbb{Z}}\sum\limits_{v\in\mathbb{Z}}\sum\limits_{j=0}^{\rho-1}\left|\widehat{(\Psi_{\kappa}^{-1})^{ji}}(v)\right|\left|\phi(Wt-\rho (l+ v)-j)\right|\right\}^{1/q}\nonumber\\
&&\hspace{4.5cm}\times\left\{\sum\limits_{l\in\mathbb{Z}}\left|f^{(i)}\left(\frac{a+\rho l}{W}\right)\right|^p\left|\Theta_{i,\kappa}(Wt-\rho l)\right|\right\}^{1/p}\nonumber\\
&=&\sum\limits_{i=0}^{\rho-1}\frac{1}{W^i}\left\{\sum\limits_{j=0}^{\rho-1}\sum\limits_{v\in\mathbb{Z}}\left|\widehat{(\Psi_{\kappa}^{-1})^{ji}}(v)\right|\sum\limits_{k\in\mathbb{Z}}\left|\phi(Wt-\rho k-j)\right|\right\}^{1/q}\nonumber\\
&&\hspace{4.5cm}\times\left\{\sum\limits_{l\in\mathbb{Z}}\left|f^{(i)}\left(\frac{a+\rho l}{W}\right)\right|^p\left|\Theta_{i,\kappa}(Wt-\rho l)\right|\right\}^{1/p}\nonumber\\
&\leq&m_{\phi}^{1/q}\sum\limits_{i=0}^{\rho-1}\frac{M_i^{1/q}}{W^i}\left\{\sum\limits_{l\in\mathbb{Z}}\left|f^{(i)}\left(\frac{a+\rho l}{W}\right)\right|^p\left|\Theta_{i,\kappa}(Wt-\rho l)\right|\right\}^{1/p},
\end{eqnarray}
where $M_i=\sum\limits_{j=0}^{\rho-1}\sum\limits_{v\in\mathbb{Z}}\left|\widehat{(\Psi_{\kappa}^{-1})^{ji}}(v)\right|.$

Since $\Psi_{\kappa}^{ij}(x)$ is infinitely differentiable on $[0, 1]$,  $(\Psi_{\kappa}^{-1})^{ji}$ is also infinitely differentiable. Consequently, for any $r>1,$ $$\left|\widehat{(\Psi_{\kappa}^{-1})^{ji}}(v)\right|\leq \frac{c}{|v|^r},~\text{for each}~i, j=0,1,\dots, \rho-1.$$ 
Hence $M:= \max(M_0, M_1,\dots, M_{\rho-1})<\infty.$ Therefore, \eqref{prop4.1} becomes
\begin{eqnarray}\label{prop4swf}
\left|(S_W^{\kappa}f)(t)\right|
\leq M^{1/q}m_{\phi}^{1/q}\sum\limits_{i=0}^{\rho-1}\frac{1}{W^i}\left\{\sum\limits_{l\in\mathbb{Z}}\left|f^{(i)}\left(\frac{a+\rho l}{W}\right)\right|^p\left|\Theta_{i,\kappa}(Wt-\rho l)\right|\right\}^{1/p}.
\end{eqnarray}
Applying the inequality $(a+b)^p\leq2^p(a^p+b^p)$ repeatedly, \eqref{prop4swf} yields
\begin{eqnarray*}
&&\hspace{-1cm}\int\limits_{-\infty}^{\infty}\left|(S_W^{\kappa}f)(t)\right|^p dt\\
&\leq&M^{p/q}m_{\phi}^{p/q}\int\limits_{-\infty}^{\infty}\left[\sum\limits_{i=0}^{\rho-1}\frac{1}{W^i}\left\{\sum\limits_{l\in\mathbb{Z}}\left|f^{(i)}\left(\frac{a+\rho l}{W}\right)\right|^p\left|\Theta_{i,\kappa}(Wt-\rho l)\right|\right\}^{1/p}\right]^p dt\\
&\leq&2^{(\rho-1) p}M^{p/q}m_{\phi}^{p/q}\sum\limits_{i=0}^{\rho-1}\frac{1}{W^{ip}}\int\limits_{-\infty}^{\infty}\sum\limits_{l\in\mathbb{Z}}\left|f^{(i)}\left(\frac{a+\rho l}{W}\right)\right|^p\left|\Theta_{i,\kappa}(Wt-\rho l)\right| dt\\
&\leq&2^{(\rho-1) p}M^{p/q}m_{\phi}^{p/q}\sum\limits_{i=0}^{\rho-1}\frac{1}{W^{ip}}\sum\limits_{l\in\mathbb{Z}}\left|f^{(i)}\left(\frac{a+\rho l}{W}\right)\right|^p\\
&&\hspace{3.5cm}\times\sum\limits_{j=0}^{\rho-1}\sum\limits_{v\in\mathbb{Z}}\left|\widehat{(\Psi_{\kappa}^{-1})^{ji}}(v)\right|\int\limits_{-\infty}^{\infty}\left|\phi(Wt-\rho (l+v)-j)\right| dt\\
&=&\frac{2^{(\rho-1) p}}{W}M^{p/q}m_{\phi}^{p/q}\|\phi\|_{L^1(\mathbb{R})}\sum\limits_{i=0}^{\rho-1}\sum\limits_{j=0}^{\rho-1}\sum\limits_{v\in\mathbb{Z}}\left|\widehat{(\Psi_{\kappa}^{-1})^{ji}}(v)\right|\sum\limits_{l\in\mathbb{Z}}\left|\frac{1}{W^i}f^{(i)}\left(\frac{a+\rho l}{W}\right)\right|^p \\
&\leq&2^{(\rho-1) p}M^{\frac{p}{q}+1}m_{\phi}^{p/q}\|\phi\|_{L^1(\mathbb{R})}\sum\limits_{l\in\mathbb{Z}}\sum\limits_{i=0}^{\rho-1}\left|f^{(i)}\left(\frac{a+\rho l}{W}\right)\right|^p\frac{\rho}{W},\\
\end{eqnarray*}
from which we obtain the desired inequality \eqref{inequalitySWf}.
\end{pf}

\begin{prop}
Let $r\geq\rho$. Suppose that $S_W^{\kappa}f$ satisfies the reproducing polynomial property of order $r.$
If $g\in W_p^r\cap C(\mathbb{R}),$ then there exists a constant $K>0$ independent of $W$ such that 
\begin{equation}\label{proposition4.2}
\|S_W^{\kappa}g-g\|_{L^p(\mathbb{R})}\leq KW^{-r}\|g^{(r)}\|_{L^p(\mathbb{R})},~(W\geq1).
\end{equation}
\end{prop}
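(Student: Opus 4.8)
The plan is to exploit the reproducing polynomial property to subtract off a Taylor polynomial, thereby reducing the error $S_W^{\kappa}g-g$ to the action of $S_W^{\kappa}$ on a Taylor \emph{remainder} that carries a factor of $g^{(r)}$, and then to extract the decay rate $W^{-r}$ from the interplay between the weights $W^{-i}$ appearing in \eqref{so} and the $(r-i)$-fold smallness of the remainder at the sample points. The rapid decay of the reconstruction functions $\Theta_{i,\kappa}$ will be the tool that localizes the sampling sum and makes the relevant moments finite.

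First, I would fix $t\in\mathbb{R}$ and let $T_t$ denote the Taylor polynomial of $g$ of degree $r-1$ centered at $t$, so that $T_t(t)=g(t)$. Since $\deg T_t\le r-1\le r$, the reproducing property of order $r$ gives $S_W^{\kappa}T_t=T_t$, and in particular $(S_W^{\kappa}T_t)(t)=g(t)$. By linearity,
\[
(S_W^{\kappa}g)(t)-g(t)=\big(S_W^{\kappa}(g-T_t)\big)(t)=\sum_{l\in\mathbb{Z}}\sum_{i=0}^{\rho-1}\frac{1}{W^i}(g-T_t)^{(i)}\Big(\frac{a+\rho l}{W}\Big)\Theta_{i,\kappa}(Wt-\rho l).
\]
Next, observing that $(g-T_t)^{(i)}$ is precisely the Taylor remainder of $g^{(i)}$ of order $r-i$, I would write it in integral form,
\[
(g-T_t)^{(i)}\Big(\frac{a+\rho l}{W}\Big)=\frac{1}{(r-1-i)!}\int_t^{(a+\rho l)/W}\Big(\frac{a+\rho l}{W}-v\Big)^{r-1-i}g^{(r)}(v)\,dv,
\]
which represents the error as an integral operator $(S_W^{\kappa}g)(t)-g(t)=\int_{\mathbb{R}}\mathcal{K}_W(t,v)\,g^{(r)}(v)\,dv$ with a kernel $\mathcal{K}_W$ assembled from $\Theta_{i,\kappa}$ and the monomial weights. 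The key bookkeeping is that on the interval of integration $|\tfrac{a+\rho l}{W}-v|\le\tfrac{|a+\rho l-Wt|}{W}$, so combining $W^{-i}$ with the $r-i$ factors of $W^{-1}$ produced by the remainder yields exactly $W^{-r}$, while the leftover factor $|a+\rho l-Wt|^{r-i}$ is absorbed by the decay of $\Theta_{i,\kappa}$.

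To justify that absorption I would invoke the rapid decay of $\Theta_{i,\kappa}$: since $\Psi_{\kappa}^{ij}$, and hence $(\Psi_{\kappa}^{-1})^{ji}$, is smooth on $[0,1]$, its Fourier coefficients decay faster than any polynomial (exactly as used in the proof of Proposition \ref{cond1intpln}), and $\phi$ is compactly supported; consequently every absolute moment $\sup_t\sum_{l\in\mathbb{Z}}|Wt-\rho l|^{k}\,|\Theta_{i,\kappa}(Wt-\rho l)|$ is finite and bounded uniformly in $t$. Using these moments I would establish the two Schur-type bounds $\sup_t\int_{\mathbb{R}}|\mathcal{K}_W(t,v)|\,dv\le CW^{-r}$ and $\sup_v\int_{\mathbb{R}}|\mathcal{K}_W(t,v)|\,dt\le CW^{-r}$, and conclude by the Schur test (equivalently, by interpolating the $L^1$ and $L^\infty$ estimates, or by the generalized Minkowski inequality) that $\|S_W^{\kappa}g-g\|_{L^p(\mathbb{R})}\le CW^{-r}\|g^{(r)}\|_{L^p(\mathbb{R})}$.

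The main obstacle I anticipate is precisely the passage from this kernel estimate to the $L^p$ bound. Because the Taylor center moves with the evaluation point, the function $g-T_t$ depends on $t$, so Proposition \ref{cond1intpln} cannot be applied directly to a single fixed function; one is forced to treat $\mathcal{K}_W$ as a genuine integral operator. Verifying the transposed bound $\sup_v\int_{\mathbb{R}}|\mathcal{K}_W(t,v)|\,dt\le CW^{-r}$ uniformly in $v$ — where $t$, $v$, and the summation index $l$ are coupled through both $\Theta_{i,\kappa}(Wt-\rho l)$ and the interval of integration — is the delicate part, and it is exactly here that the uniform-in-phase finiteness of the absolute moments of $\Theta_{i,\kappa}$ is indispensable.
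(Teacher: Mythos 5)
Your proposal is correct and follows essentially the same route as the paper: Taylor expansion of $g$ about the evaluation point $t$ with integral remainder, cancellation of the polynomial part via the reproducing property (the paper invokes Lemma \ref{lemmafortheta} for this), and an $L^p$ bound on the remainder term obtained from the compact support of $\phi$, the rapid decay of the Fourier coefficients of $(\Psi_{\kappa}^{-1})^{ji}$, and Minkowski's integral inequality (your Schur-test formulation is equivalent here). The ``delicate part'' you flag is handled in the paper by reducing the remainder to $\int_{|u|\le c_v/W}|g^{(r)}(u+t)|\,du$ before taking $L^p$ norms, which sidesteps the transposed kernel bound entirely.
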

\begin{pf}
Applying Taylor's formula with the integral remainder to the functions $g^{(i)},$ for each $i=0,1,\dots, \rho-1,$ we obtain
\begin{multline}\label{taylorsfrmla}
g^{(i)}\left(\frac{a+\rho l}{W}\right)
=\sum\limits_{s=i}^{r-1}\frac{g^{(s)}(t)}{(s-i)!}\left(\frac{a+\rho l}{W}-t\right)^{s-i}\nonumber\\
+ \frac{1}{(r-1-i)!}\int\limits_{t}^{\frac{a+\rho l}{W}}g^{(r)}(u)\left(\frac{a+\rho l}{W}-u\right)^{r-1-i} du,  
\end{multline}
for $l\in\mathbb{Z}$ and $t\in\mathbb{R}.$
It follows that
\begin{eqnarray*}
&&\hspace{-1cm}(S_W^{\kappa}g)(t)\\
&=&\sum\limits_{l\in\mathbb{Z}}\sum\limits_{i=0}^{\rho-1}\frac{1}{W^i}\sum\limits_{s=i}^{r-1}\frac{g^{(s)}(t)}{(s-i)!}\left(\frac{a+\rho l}{W}-t\right)^{s-i}\Theta_{i,\kappa}(Wt-\rho l)\\
&&+\sum\limits_{i=0}^{\rho-1}\frac{1}{W^i(r-1-i)!}\sum\limits_{l\in\mathbb{Z}}\Theta_{i,\kappa}(Wt-\rho l)\int\limits_{t}^{\frac{a+\rho l}{W}}g^{(r)}(u)\left(\frac{a+\rho l}{W}-u\right)^{r-1-i} ~du\\
&=&\sum\limits_{s=0}^{r-1}\frac{g^{(s)}(t)}{s!}\sum\limits_{l\in\mathbb{Z}}\sum\limits_{i=0}^{\rho-1}\frac{i!}{W^i}\binom{s}{i}\left(\frac{a+\rho l}{W}-t\right)^{s-i}\Theta_{i,\kappa}(Wt-\rho l)\\
&&+\sum\limits_{i=0}^{\rho-1}\frac{1}{W^i(r-1-i)!}\sum\limits_{l\in\mathbb{Z}}\Theta_{i,\kappa}(Wt-\rho l)\int\limits_{t}^{\frac{a+\rho l}{W}}g^{(r)}(u)\left(\frac{a+\rho l}{W}-u\right)^{r-1-i} ~du.\\
\end{eqnarray*}
Applying Lemma \ref{lemmafortheta}, we get
\begin{eqnarray}
&&\hspace{-1cm}S_W^{\kappa}g(t)-g(t)\nonumber\\
&=&\sum\limits_{i=0}^{\rho-1}\frac{1}{W^i(r-1-i)!}\sum\limits_{l\in\mathbb{Z}}\Theta_{i,\kappa}(Wt-\rho l)\int\limits_{t}^{\frac{a+\rho l}{W}}g^{(r)}(u)\left(\frac{a+\rho l}{W}-u\right)^{r-1-i} ~du\label{SWg-g}\nonumber\\
&=&\sum\limits_{i=0}^{\rho-1}\frac{1}{W^{r-1}(r-1-i)!}\sum\limits_{l\in\mathbb{Z}}\Theta_{i,\kappa}(Wt-\rho l)\int\limits_{t}^{\frac{a+\rho l}{W}}g^{(r)}(u)\left(a+\rho l-Wu\right)^{r-1-i}~du\nonumber\\
&=:&S_1(t)+S_2(t)\label{s1+s2},
\end{eqnarray}
where
\begin{multline*}
S_1(t)=\frac{1}{W^{r-1}}\sum\limits_{v\geq0}\sum\limits_{l\in\mathbb{Z}}\sum\limits_{i=0}^{\rho-1}\frac{1}{(r-1-i)!}\sum\limits_{j=0}^{\rho-1}\widehat{(\Psi_{\kappa}^{-1})^{ji}}(v)\phi(Wt-\rho l-\rho v-j)\\
\times\int\limits_{t}^{\tfrac{a+\rho l}{W}}g^{(r)}(u)(a+\rho l-Wu)^{r-1-i}~du
\end{multline*}
and $S_2(t)$ is defined in a similar manner as $S_1(t),$ but $v\geq0$ is replaced by $v<0.$

Since $\phi$ is compactly supported, there exist $T_0, T_1\in\mathbb{R}$ such that $\phi(t)=0$ for all $t\notin[T_0, T_1].$ Let $T=\max\{|T_0|, |T_1|\}.$ If
$T_0\leq Wt-\rho l-\rho v-j\leq T_1$ and $v\geq0,$ then $-\rho v-j-T\leq Wt-\rho l\leq T+\rho v+j,$ which implies that $\left|Wt-\rho l\right|\leq T+\rho v+\rho-1.$

Let $A_{v,t,\rho}:=\{l\in\mathbb{Z}:|Wt-\rho l|\leq T+\rho v+\rho-1\}.$ Then
\begin{eqnarray*}
|S_1(t)|&\leq&\frac{1}{W^{r-1}}\sum\limits_{v\geq0}\sum\limits_{l\in A_{v,t,\rho}}\sum\limits_{i=0}^{\rho-1}\frac{1}{(r-1-i)!}\sum\limits_{j=0}^{\rho-1}\left|\widehat{(\Psi_{\kappa}^{-1})^{ji}}(v)\right|\left|\phi(Wt-\rho l-\rho v-j)\right|\\
&&\hspace{2cm}\times\int\limits_{t}^{\tfrac{a+\rho l}{W}}\left|g^{(r)}(u)\right| \left|a+\rho l-Wu\right|^{r-1-i}~du.
\end{eqnarray*}
When $u\in \left[t, \frac{a+\rho l}{W}\right]$ and $|Wt-\rho l|\leq T+\rho v+\rho-1,$ we have 
$$|a+\rho l-Wu|\leq a+T+\rho-1+\rho v.$$
Hence
\begin{eqnarray*}
|S_1(t)|
&\leq&\frac{1}{W^{r-1}}\sum\limits_{v\geq0}\sum\limits_{l\in A_{v,t,\rho}}\sum\limits_{i=0}^{\rho-1}\frac{(a+T+\rho-1+\rho v)^{r-1-i}}{(r-1-i)!}\\
&&\hspace{1.5cm}\times\sum\limits_{j=0}^{\rho-1}\left|\widehat{(\Psi_{\kappa}^{-1})^{ji}}(v)\right|\left|\phi(Wt-\rho l-\rho v-j)\right|
\int\limits_{0}^{\tfrac{a+\rho l}{W}-t}|g^{(r)}(u+t)| du \\
&\leq&\frac{m_{\phi}}{W^{r-1}}\sum\limits_{v\geq0} \sum\limits_{i=0}^{\rho-1}\frac{(a+T+\rho-1+\rho v)^{r-1-i}}{(r-1-i)!}\sum\limits_{j=0}^{\rho-1}\left|\widehat{(\Psi_{\kappa}^{-1})^{ji}}(v)\right|\\
&&\hspace{1.5cm}\times\int\limits_{|u|\leq\tfrac{a+T+\rho-1+\rho v}{W}}|g^{(r)}(u+t)| du.\\
\end{eqnarray*}
By Minkowski's integral inequality, we get
\begin{eqnarray}\label{s1}
\|S_1\|_{L^p(\mathbb{R})}
&\leq&\frac{m_{\phi}}{W^{r-1}}\sum\limits_{v\geq0}\sum\limits_{i=0}^{\rho-1}\frac{(a+T+\rho-1+\rho v)^{r-1-i}}{(r-1-i)!}\sum\limits_{j=0}^{\rho-1}|\widehat{(\Psi_{\kappa}^{-1})^{ji}}(v)|\nonumber\\
&&\hspace{1.5cm}\times\int\limits_{|u|\leq\tfrac{a+T+\rho-1+\rho v}{W}}\|g^{(r)}(u+\bullet)\|_{L^p(\mathbb{R})} du\nonumber\\
&=&\frac{2m_{\phi}}{W^{r}}\|g^{(r)}\|_{L^p(\mathbb{R})}\nonumber\sum\limits_{v\geq0}\sum\limits_{i=0}^{\rho-1}\frac{(a+T+\rho-1+\rho v)^{r-i}}{(r-1-i)!}\sum\limits_{j=0}^{\rho-1}|\widehat{(\Psi_{\kappa}^{-1})^{ji}}(v)|\nonumber\\
&=&\frac{2m_{\phi}}{W^{r}(r-1)!}h_1\|g^{(r)}\|_{L^p(\mathbb{R})},
\end{eqnarray}
where $h_1
=\sum\limits_{v\geq0}\sum\limits_{i=0}^{\rho-1}\binom{r-1}{i}i!(a+T+\rho-1+\rho v)^{r-i}\sum\limits_{j=0}^{\rho-1}\left|\widehat{(\Psi_{\kappa}^{-1})^{ji}}(v)\right|.$
Similarly, we can prove that
\begin{eqnarray}\label{s2}
\|S_2\|_{L^p(\mathbb{R})}
&\leq&\frac{2m_{\phi}}{W^{r}(r-1)!}h_2\|g^{(r)}\|_{L^p(\mathbb{R})},
\end{eqnarray}
where $h_2
=\sum\limits_{v<0}\sum\limits_{i=0}^{\rho-1}\binom{r-1}{i}i!(a+T+\rho-1+\rho v)^{r-i}\sum\limits_{j=0}^{\rho-1}\left|\widehat{(\Psi_{\kappa}^{-1})^{ji}}(-v)\right|.$
Since $(\Psi_{\kappa}^{-1})^{ji}$ is infinitely differentiable on $[0, 1],$ it is easy to show that $h_1$ and $h_2$  are finite.
Consequently, we obtain the desired inequality \eqref{proposition4.2} from \eqref{s1+s2}, \eqref{s1}, and \eqref{s2}.
\end{pf}

Since $S_W^{\kappa}$ satisfies $(i)$ and $(ii)$ of Theorem \ref{geninterpolation} for uniform partitions $\Sigma_W$, we obtain the following
\begin{thm}\label{smplngoperatorthm}
Let $r\geq\rho.$ Assume that the sampling operator  $S_W^{\kappa}$ satisfies the reproducing polynomial property of order $r.$ Then for $1\leq p<\infty,$ 
\begin{equation}
\|S_W^{\kappa}f-f\|_{L^p(\mathbb{R})}\leq \sum\limits_{i=0}^{\rho-1}c_i\tau_r(f^{(i)}, W^{-1})_p,~ (f\in\Lambda_{\rho}^p;~W\geq1),
\end{equation}
where $c_i,$ $i=0,1,\dots,\rho-1$ depend on $\rho$ and $r$ only.
\end{thm}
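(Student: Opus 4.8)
The plan is to read the statement as a direct application of the abstract interpolation theorem, Theorem \ref{geninterpolation}, to the one-parameter family $L_\beta := S_W^{\kappa}$ indexed by $\beta = W \in [1,\infty)$ together with the associated uniform partitions $\Sigma_W$; the two preceding propositions are tailored to supply exactly its two hypotheses. First I would pin down the mesh sizes: the nodes $\left(\tfrac{a+\rho l}{W}\right)_{l\in\mathbb{Z}}$ of $\Sigma_W$ are equally spaced by $\rho/W$, so $\underline\Delta_W = \overline\Delta_W = \rho/W$. Since $W \geq 1$ and $r \geq \rho$, this yields $\overline\Delta_W = \rho/W \leq \rho \leq r$, so the admissibility condition $\overline\Delta_\beta \leq r$ of Theorem \ref{geninterpolation} holds throughout the range $W \geq 1$.

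Next I would verify the two hypotheses. Hypothesis $(i)$, namely $\|S_W^{\kappa}f\|_{L^p(\mathbb{R})} \leq K_1\|f\|_{\ell_{\rho}^p(W)}$ for $f \in \Lambda_{\rho}^p$ with $K_1$ independent of $W$, is precisely Proposition \ref{cond1intpln}. For hypothesis $(ii)$ I would take the admissible choice $s = r$: the preceding proposition (whose hypothesis, the reproducing polynomial property of order $r$, is exactly the standing assumption of the present theorem) gives $\|S_W^{\kappa}g - g\|_{L^p(\mathbb{R})} \leq KW^{-r}\|g^{(r)}\|_{L^p(\mathbb{R})}$ on $g \in W_p^r \cap C(\mathbb{R})$, and rewriting $W^{-r} = \rho^{-r}(\rho/W)^r = \rho^{-r}\overline\Delta_W^{\,r}$ turns this into $\|S_W^{\kappa}g - g\|_{L^p(\mathbb{R})} \leq K_2\overline\Delta_W^{\,r}\|g^{(r)}\|_{L^p(\mathbb{R})}$ with $K_2 = K\rho^{-r}$, which is hypothesis $(ii)$. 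Invoking Theorem \ref{geninterpolation} with $s = r$ then produces
\[
\|S_W^{\kappa}f - f\|_{L^p(\mathbb{R})} \leq \sum_{i=0}^{\rho-1} c_i\, \tau_r\big(f^{(i)}; \overline\Delta_W^{\,s/r}\big)_p = \sum_{i=0}^{\rho-1} c_i\, \tau_r\big(f^{(i)}; \rho W^{-1}\big)_p .
\]

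The final and only nonmechanical step is to replace the argument $\rho W^{-1}$ by the nominal rate $W^{-1}$. Here mere monotonicity of the $\tau$-modulus runs the wrong way, since $W^{-1} \leq \rho W^{-1}$; instead I would invoke the scaling (semi-additivity) property of the averaged modulus of smoothness, which for the fixed integer $\rho$ gives $\tau_r(g; \rho W^{-1})_p \leq c(\rho, r)\,\tau_r(g; W^{-1})_p$. Absorbing $c(\rho,r)$ into the constants yields the asserted estimate with $c_i$ depending only on $\rho$ and $r$ (equivalently, on the fixed data $\kappa$ and on $r$). The genuine analytic content thus resides in the two preceding propositions, and the work remaining in this theorem is essentially the bookkeeping that identifies $s=r$ and reconciles the mesh size $\rho/W$ with the rate $W^{-1}$; that rescaling is the one place where a quantitative property of the $\tau$-modulus beyond monotonicity is actually needed.
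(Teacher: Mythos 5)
Your proposal is correct and follows essentially the same route as the paper, which likewise obtains the theorem by verifying hypotheses $(i)$ and $(ii)$ of Theorem \ref{geninterpolation} for the uniform partitions $\Sigma_W$ via Proposition \ref{cond1intpln} and the subsequent proposition with $s=r$. In fact you are more careful than the paper's one-line justification: you explicitly record that $\overline\Delta_W=\rho/W\leq r$ and that the interpolation theorem delivers $\tau_r(f^{(i)};\rho W^{-1})_p$ rather than $\tau_r(f^{(i)};W^{-1})_p$, a gap the paper silently absorbs but which is legitimately closed, as you do, by the scaling property \eqref{proptau} of the averaged modulus.
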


\begin{cor}\label{approxcor}
Let $1\leq p<\infty$ and $\alpha>0.$  Assume that the sampling operator $S_W^{\kappa}$  satisfies the reproducing polynomial property of order $r.$ If $f\in\Lambda_{\rho}^p$ such that  $\tau_r(f^{(i)},\delta)_p=\mathcal{O}(\delta^{\alpha})$ as $\delta\to0+$ for each $i=0,1,\dots,\rho-1,$ then
\begin{equation}
\|S_W^{\kappa}f-f\|_{L^p(\mathbb{R})}= \mathcal{O}(W^{-\alpha})~~(W\to\infty).
\end{equation}
\end{cor}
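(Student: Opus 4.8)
The plan is to read the conclusion off directly from the quantitative estimate in Theorem \ref{smplngoperatorthm}, so that the only substantive step is converting the pointwise-in-$\delta$ growth hypothesis on the averaged moduli into the claimed decay rate in $W$.

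First I would invoke Theorem \ref{smplngoperatorthm}. Since $r\geq\rho$ and $S_W^{\kappa}$ satisfies the reproducing polynomial property of order $r$, for every $f\in\Lambda_{\rho}^p$ and every $W\geq 1$ one has
\[
\|S_W^{\kappa}f-f\|_{L^p(\mathbb{R})}\leq \sum_{i=0}^{\rho-1}c_i\,\tau_r(f^{(i)}, W^{-1})_p,
\]
with constants $c_i$ depending only on $\rho$ and $r$. This already reduces the problem to controlling each term $\tau_r(f^{(i)}, W^{-1})_p$.

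Next I would unwind the hypothesis. By assumption, for each $i=0,1,\dots,\rho-1$ there exist a constant $C_i>0$ and a threshold $\delta_i>0$ such that $\tau_r(f^{(i)},\delta)_p\leq C_i\,\delta^{\alpha}$ whenever $0<\delta<\delta_i$. Taking $\delta=W^{-1}$ and restricting to $W$ large enough that $W^{-1}<\min_i\delta_i$, each summand is bounded by $c_iC_iW^{-\alpha}$. Summing over the finitely many indices then gives
\[
\|S_W^{\kappa}f-f\|_{L^p(\mathbb{R})}\leq \Big(\sum_{i=0}^{\rho-1}c_iC_i\Big)W^{-\alpha},
\]
valid for all sufficiently large $W$, which is precisely $\mathcal{O}(W^{-\alpha})$ as $W\to\infty$.

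There is no genuine obstacle here: the corollary is a routine asymptotic consequence of Theorem \ref{smplngoperatorthm}, and the only point requiring a moment's care is that the index set $\{0,\dots,\rho-1\}$ is finite, so the constant $\sum_{i} c_iC_i$ is finite and the individual $\mathcal{O}$-estimates combine without difficulty. I would also note in passing that the identical computation shows that a uniform refinement $\tau_r(f^{(i)},\delta)_p=o(\delta^{\alpha})$ for every $i$ would upgrade the conclusion to $\|S_W^{\kappa}f-f\|_{L^p(\mathbb{R})}=o(W^{-\alpha})$.
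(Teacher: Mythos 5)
Your proposal is correct and follows exactly the route the paper intends: the corollary is stated as an immediate consequence of Theorem \ref{smplngoperatorthm}, obtained by substituting $\delta=W^{-1}$ into the hypothesis $\tau_r(f^{(i)},\delta)_p=\mathcal{O}(\delta^{\alpha})$ and summing the finitely many terms. The paper gives no separate proof, so your write-up simply makes explicit the same one-line deduction.
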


\section{examples}
We now illustrate our results for shift-invariant spline space $V(Q_m)$ for certain values of $\kappa.$
Taking Fourier transform on \eqref{theta}, we obtain that
\begin{eqnarray}\label{thetaroof}
\widehat{\Theta_{i,\kappa}}(\xi)
&=&\sum_{v\in\mathbb{Z}}\sum\limits_{j=0}^{\rho -1}\widehat{\left(\Psi_{\kappa}^{-1}\right)^{ji}}(v)\widehat{\phi}(\xi)e^{-2\pi\mathrm{i}(\rho v+j)\xi}\nonumber\\
&=&\widehat{\phi}(\xi)\sum\limits_{j=0}^{\rho-1}\left(\Psi_{\kappa}^{-1}\right)^{ji}(-\rho\xi)e^{-2\pi\mathrm{i}j\xi}.
\end{eqnarray}
Throughout this section, we let $z=e^{2\pi\mathrm{i}\xi}.$
\subsection{For $\kappa=(Q_3,0,2)$}
By Theorem \ref{cisoforderm-2}, $2\mathbb{Z}$ is a CIS  of order $1$ for $V(Q_3)$. In this case  
$$\Psi_{\kappa}(t)=e^{2\pi\mathrm{i}t}
\begin{bmatrix} 
\frac{1}{2} & \frac{1}{2}\\\\
-1 & 1
\end{bmatrix}
~\text{and}~
\Psi^{-1}_{\kappa}(t)=
e^{-2\pi\mathrm{i}t}\begin{bmatrix}
1 & -\frac{1}{2}\\ \\
1 & \frac{1}{2}
\end{bmatrix}. 
$$
The Fourier coefficients  of $\Psi^{-1}_{\kappa}$ are given by
$$\widehat{\Psi^{-1}_{\kappa}}(-1)= \begin{bmatrix}
1 & -\tfrac{1}{2} \\ \\
1 &  \tfrac{1}{2}
\end{bmatrix}~\text{and}~
\widehat{\Psi^{-1}_{\kappa}}(n)=
\begin{bmatrix}
0&0\\ 0&0
\end{bmatrix}~\text{for}~ n\neq-1.$$
Consequently,
$$\Theta_{0,{\kappa}}(t)=Q_3(t+2)+Q_3(t+1)
~\text{and}~ \Theta_{1,\kappa}(t)
=-\frac{1}{2}Q_3(t+2)+\frac{1}{2}Q_3(t+1).$$
Hence we obtain the following sampling formula 
$$
f(t)=\sum\limits_{l\in\mathbb{Z}}\left\{\left[f(2l)-\tfrac{1}{2}f^{\prime}(2l)\right]Q_3(t-2l+2)+\left[f(2l)+\tfrac{1}{2}f^{\prime}(2l)\right]Q_3(t-2l+1)\right\}.
$$
Since
$$\Psi_{\kappa}^{*}\Psi_{\kappa}(t)
=\begin{bmatrix}
\frac{5}{4} & -\frac{3}{4}  \\ \\
-\frac{3}{4} & \frac{5}{4}
\end{bmatrix},~
\lambda_{max}(\Psi^*_{\kappa}\Psi_{\kappa}(t))= 2,~\text{ and}~ \lambda_{min}(\Psi^*_{\kappa}\Psi_{\kappa}(t))= \frac{1}{2},$$
we obtain that
\begin{equation}
\frac{1}{2}\|f\|^2_2\leq\sum_{l\in \mathbb{Z}}\sum_{i=0}^{1}|f^{(i)}(2l)|^2\leq15\|f\|^2_2,\nonumber
\end{equation}
from Theorem \ref{upperlowerbound}. We now obtain from \eqref{thetaroof} that
$$\widehat{\Theta_{0,{\kappa}}}(\xi)=\widehat{Q_3}(\xi)\left(z^2+z\right)~\text{and}~
\widehat{\Theta_{1,\kappa}}(\xi)=\tfrac{1}{2}\widehat{Q_3}(\xi)\left(-z^2+z\right).$$
So
\begin{equation*}
\hspace{-4.65cm}\widehat{\Theta_{0,\kappa}}^{\prime}({\xi})=2\pi\mathrm{i}\widehat{Q_3}(\xi)\left(2z^2+z\right)+\widehat{Q_3}^{\prime}(\xi)\left(z^2+z\right),
\end{equation*}
\begin{equation*}
\widehat{\Theta_{0,\kappa}}^{\prime\prime}({\xi})=(2\pi\mathrm{i})^2\widehat{Q_3}(\xi)\left(4z^2+z\right)+4\pi\mathrm{i}\widehat{Q_3}^{\prime}(\xi)\left(2z^2+z\right)+\widehat{Q_3}^{\prime\prime}(\xi)\left(z^2+z\right),
\end{equation*}
and
\begin{equation*}
\hspace{-4cm}\widehat{\Theta_{1,\kappa}}^{\prime}(\xi)=\pi\mathrm{i}\widehat{Q_3}(\xi)\left(-2z^2+z\right)+\tfrac{1}{2}\widehat{Q_3}^{\prime}(\xi)\left(-z^2+z\right).
\end{equation*}
It is easy to verify that $\widehat{Q_3}(0)=1,$ $\widehat{Q_3}^{\prime}(0)=-3\pi\mathrm{i},$ and $\widehat{Q_3}^{\prime\prime}(0)=-10\pi^2.$ By Strang-Fix condition of $Q_3,$ we have
\begin{equation*}
\hspace{-6.9cm}\widehat{\Theta_{0,\kappa}}(l/2)=
\begin{cases}
2 &~\text{if}~l=0,\\
0 &~\text{if}~l\in\mathbb{Z}\setminus\{0\},
\end{cases}
\end{equation*}
\begin{equation*}
\hspace{-2.9cm}\widehat{\Theta_{1,\kappa}}(l/2)=\frac{\mathrm{i}}{2\pi}\widehat{\Theta_{0,\kappa}}^{\prime}(l/2)=
\begin{cases}
0 &~\text{if}~l\in2\mathbb{Z},\\
-\widehat{Q_3}\left(\tfrac{l}{2}\right)&~\text{if}~l\in1+2\mathbb{Z},
\end{cases}
\end{equation*}
\begin{equation*}
\widehat{\Theta_{1,\kappa}}^{\prime}(l/2)
=\frac{\mathrm{i}}{4\pi}\widehat{\Theta_{0,\kappa}}^{\prime\prime}(l/2)=
\begin{cases}
 -\pi\mathrm{i} &~\text{if}~l=0,\\
 0 &~\text{if}~l\in2\mathbb{Z}\setminus\{0\},\\
-3\pi\mathrm{i}\widehat{Q_3}\left(\tfrac{l}{2}\right)-\widehat{Q_3}^{\prime}\left(\tfrac{l}{2}\right) &~\text{if}~l\in1+2\mathbb{Z}.
\end{cases}\nonumber
\end{equation*}
By substituting the values of $\widehat{\Theta_{i,\kappa}}^{(r)}(l/2)$ in \eqref{ftthetacond}, we can show that the operator $S_W^{\kappa}$ satisfies the reproducing polynomial property of order $2.$
Hence we obtain the following result from Corollary \ref{approxcor}.
\begin{cor}
Let $\kappa=(Q_3,0,2).$ If $f\in\Lambda_{2}^p$ such that 
$\tau_2(f^{(i)},\delta)_p=\mathcal{O}(\delta^{\alpha})$ as $\delta\to0+$ for each $i=0,1,$  then
\begin{equation}
\|S_W^{\kappa}f-f\|_{L^p(\mathbb{R})}= \mathcal{O}(W^{-\alpha})~ (W\to\infty).
\end{equation}
\end{cor}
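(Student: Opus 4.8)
The plan is to deduce the statement from Corollary \ref{approxcor} applied with $\rho=2$, $r=2$, and the given exponent $\alpha$. Since $f\in\Lambda_2^p$ and $\tau_2(f^{(i)},\delta)_p=\mathcal{O}(\delta^\alpha)$ is assumed for $i=0,1$, and since $r=2\geq\rho$, the only hypothesis of that corollary still to be verified is that $S_W^\kappa$ reproduces every polynomial of degree $\leq 2$. Once this is in hand, the conclusion $\|S_W^\kappa f-f\|_{L^p(\mathbb{R})}=\mathcal{O}(W^{-\alpha})$ follows at once.

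To check the reproducing polynomial property of order $2$ I would use the equivalent condition $(iii)$ of Lemma \ref{lemmafortheta}, namely \eqref{ftthetacond}, for each $n=0,1,2$ and all $l\in\mathbb{Z}$. As $a=0$ here, the inner sum over $m$ in \eqref{ftthetacond} keeps only the term $m=0$, so the condition reduces to
\begin{equation*}
\sum_{i=0}^{1}\binom{n}{i}i!\,(2\pi\mathrm{i})^{i-n}\,\widehat{\Theta_{i,\kappa}}^{(n-i)}\!\left(\tfrac{l}{2}\right)=2\,\delta_{l0}\,\delta_{n0}.
\end{equation*}
All the quantities appearing here, namely $\widehat{\Theta_{0,\kappa}}(l/2)$, $\widehat{\Theta_{1,\kappa}}(l/2)$, and $\widehat{\Theta_{1,\kappa}}^{\prime}(l/2)$, have already been tabulated just above the statement, so this amounts to a finite verification.

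Concretely, for $n=0$ only the $i=0$ term survives and the identity reads $\widehat{\Theta_{0,\kappa}}(l/2)=2\delta_{l0}$, which is exactly the computed value. For $n=1$ the two contributions are $\tfrac{1}{2\pi\mathrm{i}}\widehat{\Theta_{0,\kappa}}^{\prime}(l/2)$ and $\widehat{\Theta_{1,\kappa}}(l/2)$; the relation $\widehat{\Theta_{1,\kappa}}(l/2)=\tfrac{\mathrm{i}}{2\pi}\widehat{\Theta_{0,\kappa}}^{\prime}(l/2)=-\tfrac{1}{2\pi\mathrm{i}}\widehat{\Theta_{0,\kappa}}^{\prime}(l/2)$ forces them to cancel. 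For $n=2$ the $i=0$ and $i=1$ terms pair in the same way, and using $\widehat{\Theta_{1,\kappa}}^{\prime}(l/2)=\tfrac{\mathrm{i}}{4\pi}\widehat{\Theta_{0,\kappa}}^{\prime\prime}(l/2)$ one finds $\tfrac{1}{(2\pi\mathrm{i})^2}\widehat{\Theta_{0,\kappa}}^{\prime\prime}(l/2)$ cancelling against $\tfrac{1}{\pi\mathrm{i}}\widehat{\Theta_{1,\kappa}}^{\prime}(l/2)$, so the whole expression vanishes. This establishes the reproducing polynomial property of order $2$.

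There is no genuine obstacle in this argument: the substantive analysis (the stability estimate, the explicit form of $\Theta_{i,\kappa}$, and the interpolation bound of Theorem \ref{geninterpolation}) has already been absorbed into Theorem \ref{smplngoperatorthm} and Corollary \ref{approxcor}. The only point needing care is the bookkeeping in the reproducing-property check, in particular noticing that $a=0$ collapses the $m$-sum and that the remaining terms pair off to cancel for $n=1,2$. With that finite computation done, the corollary is immediate.
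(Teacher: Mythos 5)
Your argument is correct and follows essentially the same route as the paper: the text preceding the corollary tabulates $\widehat{\Theta_{i,\kappa}}^{(r)}(l/2)$ precisely so that condition $(iii)$ of Lemma \ref{lemmafortheta} can be checked for $n=0,1,2$, after which Corollary \ref{approxcor} (with $\rho=r=2$) gives the conclusion. Your observation that $a=0$ collapses the $m$-sum and that the identities $\widehat{\Theta_{1,\kappa}}(l/2)=\tfrac{\mathrm{i}}{2\pi}\widehat{\Theta_{0,\kappa}}^{\prime}(l/2)$ and $\widehat{\Theta_{1,\kappa}}^{\prime}(l/2)=\tfrac{\mathrm{i}}{4\pi}\widehat{\Theta_{0,\kappa}}^{\prime\prime}(l/2)$ force the cancellation for $n=1,2$ is exactly the substitution the paper leaves implicit.
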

\subsection{For $\kappa=(Q_4,0,3)$}
It follows from Theorem \ref{cisoforderm-2} that $3\mathbb{Z}$ is a CIS of order 2 for $V(Q_4)$.  In this case,
$$\Psi_{\kappa}(t)=
e^{2\pi\mathrm{i}t}\begin{bmatrix}
\dfrac{1}{6} & \dfrac{2}{3} & \dfrac{1}{6}\\
-\dfrac{1}{2} & 0 & \dfrac{1}{2}\\
1 & -2 & 1
\end{bmatrix}
~\text{and}~
\Psi_{\kappa}^{-1}(t)=
e^{-2\pi\mathrm{i}t}\begin{bmatrix}
1 & -1 & \dfrac{1}{3}\\
1 & 0 & -\dfrac{1}{6}\\
1 & 1 & \dfrac{1}{3}
\end{bmatrix}.
$$
We now obtain from \eqref{thetaroof} that
\begin{eqnarray*}
\hspace{-1.5cm}\widehat{\Theta_{0,\kappa}}(\xi)
=\widehat{Q_4}(\xi)(z^3+z^2+z),~ \widehat{\Theta_{1,\kappa}}(\xi)=\widehat{Q_4}(\xi)(-z^3+z),
\end{eqnarray*}
and
\begin{eqnarray*}
\hspace{-5cm}\widehat{\Theta_{2,\kappa}}(\xi)=\widehat{Q_4}(\xi)\left(\frac{1}{3}z^3-\frac{1}{6}z^2+\frac{1}{3}z\right).
\end{eqnarray*}
Hence
\begin{equation*}
\hspace{-4.75cm}\widehat{\Theta_{0,\kappa}}^{\prime}(\xi)
=2\pi\mathrm{i}\widehat{Q_4}(\xi)(3z^3+2z^2+z)+\widehat{Q_4}^{\prime}(\xi)(z^3+z^2+z),
\end{equation*}
\begin{equation*}
\hspace{-6.05cm}\widehat{\Theta_{1,\kappa}}^{\prime}(\xi)
=2\pi\mathrm{i}\widehat{Q_4}(\xi)(-3z^3+z)+\widehat{Q_4}^{\prime}(\xi)(-z^3+z),
\end{equation*}
\begin{equation*}
\widehat{\Theta_{0,\kappa}}^{\prime\prime}(\xi)
=-4\pi^2\widehat{Q_4}(\xi)(9z^3+4z^2+z)+4\pi\mathrm{i}\widehat{Q_4}^{\prime}(\xi)(3z^3+2z^2+z)+\widehat{Q_4}^{\prime\prime}(\xi)(z^3+z^2+z),
\end{equation*}
\begin{multline*}
\hspace{-0.34cm}\widehat{\Theta_{0,\kappa}}^{\prime\prime\prime}(\xi)=-8\pi^3\mathrm{i}\widehat{Q_4}(\xi)(27z^3+8z^2+z)-12\pi^2\widehat{Q_4}^{\prime}(\xi)(9z^3+4z^2+z)\\
+6\pi\mathrm{i}\widehat{Q_4}^{\prime\prime}(\xi)(3z^3+2z^2+z)+\widehat{Q_4}^{\prime\prime\prime}(\xi)(z^3+z^2+z),
\end{multline*}
\begin{equation*}
\hspace{-1.3cm}\widehat{\Theta_{1,\kappa}}^{\prime\prime}(\xi)=-4\pi^2\widehat{Q_4}(\xi)(-9z^3+z)+4\pi\mathrm{i}\widehat{Q_4}^{\prime}(\xi)(-3z^3+z)+\widehat{Q_4}^{\prime\prime}(\xi)(-z^3+z),
\end{equation*}
and
\begin{equation*}
\hspace{-2.8cm}\widehat{\Theta_{2,\kappa}}^{\prime}(\xi)=2\pi\mathrm{i}\widehat{Q_4}(\xi)\left(z^3-\frac{1}{3}z^2+\frac{1}{3}z\right)+\widehat{Q_4}^{\prime}(\xi)\left(\frac{1}{3}z^3-\frac{1}{6}z^2+\frac{1}{3}z\right).
\end{equation*}
It is easy to compute that $\widehat{Q_4}(0)=1,$ $\widehat{Q_4}^{\prime}(0)=-4\pi\mathrm{i},$ $\widehat{Q_4}^{\prime\prime}(0)=-\frac{52}{3}\pi^2,$ and $\widehat{Q_4}^{\prime\prime\prime}(0)=80\pi^3\mathrm{i}.$ Again, by Strang-Fix condition of $Q_4,$ we verified that
\begin{equation*}
\hspace{-8cm}\widehat{\Theta_{0,\kappa}}(l/3)=
\begin{cases}
3 &~\text{if}~ l=0,\\
0 &~\text{if}~ l\in\mathbb{Z}\setminus\{0\},
\end{cases}
\end{equation*}
\begin{equation*}
\hspace{-2cm}\widehat{\Theta_{1,\kappa}}(l/3)=\frac{\mathrm{i}}{2\pi}\widehat{\Theta_{0,\kappa}}^{\prime}(l/3)=
\begin{cases}
0 &~\text{if}~l\in3\mathbb{Z},\\
\tfrac{1}{2}(-3+\sqrt{3}\mathrm{i})\widehat{Q_4}(\tfrac{l}{3}) &~\text{if}~l\in1+3\mathbb{Z},\\
\tfrac{1}{2}(-3-\sqrt{3}\mathrm{i})\widehat{Q_4}(\tfrac{l}{3}) &~\text{if}~l\in2+3\mathbb{Z},
\end{cases}
\end{equation*}
\begin{equation*}
\hspace{0.7cm}\widehat{\Theta_{0,\kappa}}^{\prime\prime}(l/3)=
\begin{cases}
-12\pi^2 &~\text{if}~l=0,\\
0&~\text{if}~l\in3\mathbb{Z}\setminus\{0\},\\
-2\pi^2(13-3\sqrt{3}\mathrm{i})\widehat{Q_4}\left(\tfrac{l}{3}\right)+2\pi\mathrm{i}(3-\sqrt{3}\mathrm{i})\widehat{Q_4}^{\prime}\left(\tfrac{l}{3}\right) &~\text{if}~l\in1+3\mathbb{Z},\\
-2\pi^2(13+3\sqrt{3}\mathrm{i})\widehat{Q_4}\left(\tfrac{l}{3}\right)+2\pi\mathrm{i}(3+\sqrt{3}\mathrm{i})\widehat{Q_4}^{\prime}\left(\tfrac{l}{3}\right) &~\text{if}~l\in2+3\mathbb{Z},
\end{cases}
\end{equation*}
\begin{equation*}
\hspace{-0.5cm}\widehat{\Theta_{1,\kappa}}^{\prime}(l/3)=
\begin{cases}
-4\pi\mathrm{i} &~\text{if}~l=0,\\
0 &~\text{if}~l\in3\mathbb{Z}\setminus\{0\},\\
\pi\mathrm{i}(-7+\sqrt{3}\mathrm{i})\widehat{Q_4}\left(\tfrac{l}{3}\right)+\tfrac{-3+\sqrt{3}\mathrm{i}}{2}\widehat{Q_4}^{\prime}\left(\tfrac{l}{3}\right)&~\text{if}~l\in1+3\mathbb{Z},\\
\pi\mathrm{i}(-7-\sqrt{3}\mathrm{i})\widehat{Q_4}\left(\tfrac{l}{3}\right)+\tfrac{3+\sqrt{3}\mathrm{i}}{2}\widehat{Q_4}^{\prime}\left(\tfrac{l}{3}\right) &~\text{if}~l\in2+3\mathbb{Z},
\end{cases}
\end{equation*}
\begin{equation*}
\hspace{-5cm}\widehat{\Theta_{2,\kappa}}(l/3)=
\begin{cases}
\tfrac{1}{2} &~\text{if}~l=0,\\
0&~\text{if}~l\in3\mathbb{Z}\setminus\{0\},\\
\tfrac{1+\sqrt{3}\mathrm{i}}{4}\widehat{Q_4}\left(\tfrac{l}{3}\right) &~\text{if}~l\in1+3\mathbb{Z},\\
\tfrac{1-\sqrt{3}\mathrm{i}}{4}\widehat{Q_4}\left(\tfrac{l}{3}\right) &~\text{if}~l\in2+3\mathbb{Z},
\end{cases}
\end{equation*}
\begin{equation*}
\widehat{\Theta_{2,\kappa}}^{\prime}(l/3)=
\begin{cases}
0&~\text{if}~l\in3\mathbb{Z},\\
2\pi\mathrm{i}\left(1+\tfrac{\sqrt{3}\mathrm{i}}{3}\right)\widehat{Q_4}\left(\tfrac{l}{3}\right)+\frac{1}{4}(1+\sqrt{3}\mathrm{i})\widehat{Q_4}^{\prime}\left(\tfrac{l}{3}\right)&~\text{if}~l\in1+3\mathbb{Z},\\
2\pi\mathrm{i}\left(1-\tfrac{\sqrt{3}\mathrm{i}}{3}\right)\widehat{Q_4}\left(\tfrac{l}{3}\right)+\frac{1}{4}(1-\sqrt{3}\mathrm{i})\widehat{Q_4}^{\prime}\left(\tfrac{l}{3}\right) &~\text{if}~l\in2+3\mathbb{Z},
\end{cases}
\end{equation*}
\begin{equation*}
\hspace{.7cm}\widehat{\Theta_{0,\kappa}}^{\prime\prime\prime}(l/3)=
\begin{cases}
0&~\text{if}~l\in3\mathbb{Z},\\
-4\pi^3\mathrm{i}\gamma\widehat{Q_4}\left(\tfrac{l}{3}\right)-6\pi^2\beta\widehat{Q_4}^{\prime}\left(\tfrac{l}{3}\right)+3\pi\mathrm{i}\alpha\widehat{Q_4}^{\prime\prime}\left(\tfrac{l}{3}\right)&~\text{if}~l\in1+3\mathbb{Z},\\
-4\pi^3i\overline{\gamma}\widehat{Q_4}\left(\tfrac{l}{3}\right)-6\pi^2\overline{\beta}\widehat{Q_4}^{\prime}\left(\tfrac{l}{3}\right)+3\pi\mathrm{i}\overline{\alpha}\widehat{Q_4}^{\prime\prime}\left(\tfrac{l}{3}\right)&~\text{if}~l\in2+3\mathbb{Z},
\end{cases}
\end{equation*}
and
\begin{equation*}
\hspace{-0.3cm}\widehat{\Theta_{1,\kappa}}^{\prime\prime}(l/3)=
\begin{cases}
0&~\text{if}~l\in3\mathbb{Z},\\
2\pi^2a\widehat{Q_4}\left(\tfrac{l}{3}\right)-2\pi\mathrm{i}b\widehat{Q_4}^{\prime}\left(\tfrac{l}{3}\right)-\frac{1}{2}\alpha\widehat{Q_4}^{\prime\prime}\left(\tfrac{l}{3}\right)&~\text{if}~l\in1+3\mathbb{Z},\\
2\pi^2\overline{a}\widehat{Q_4}\left(\tfrac{l}{3}\right)-2\pi\mathrm{i}\overline{b}\widehat{Q_4}^{\prime}\left(\tfrac{l}{3}\right)-\frac{1}{2}\overline{\alpha}\widehat{Q_4}^{\prime\prime}\left(\tfrac{l}{3}\right)&~\text{if}~l\in2+3\mathbb{Z},
\end{cases}
\end{equation*}
where $\alpha=3-\sqrt{3}\mathrm{i},$ $\beta=13-3\sqrt{3}\mathrm{i},$ $\gamma=45-7\sqrt{3}\mathrm{i},$ $a=19-\sqrt{3}\mathrm{i},$ and $b=7-\sqrt{3}\mathrm{i}.$

By substituting the values of $\widehat{\Theta_{i,\kappa}}^{(r)}(l/3)$ in \eqref{ftthetacond}, we can show that the operator $S_W^{\kappa}$ satisfies the reproducing polynomial property of order $3.$ Again from Corollary \ref{approxcor}, we obtain the following result.
\begin{cor}
Let $\kappa=(Q_4,0,3).$ If $f\in\Lambda_{3}^p$ such that $\tau_3(f^{(i)},\delta)_p=\mathcal{O}(\delta^{\alpha})$ as $\delta\to0+$ for each $i=0,1,2,$ then
\begin{equation}
\|S_W^{\kappa}f-f\|_{L^p(\mathbb{R})}= \mathcal{O}(W^{-\alpha})~ (W\to\infty).
\end{equation}
\end{cor}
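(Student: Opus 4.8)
The assertion is the specialization of Corollary \ref{approxcor} to the parameters $\rho=r=3$ and $a=0$, so the entire content of the proof is to check that the remaining hypothesis of that corollary is met, namely that $S_W^{\kappa}$ reproduces polynomials of degree $\le 3$. The plan is thus to verify the reproducing polynomial property of order $3$ and then invoke Corollary \ref{approxcor} directly. By Lemma \ref{lemmafortheta}, this property is equivalent to \eqref{ftthetacond} holding for $n=0,1,2,3$ and every $l\in\mathbb{Z}$. Because $a=0$ here, the factor $a^{m}$ annihilates every summand with $m\ge 1$, and \eqref{ftthetacond} collapses to
\[
\sum_{i=0}^{2}\binom{n}{i}i!\,(2\pi\mathrm{i})^{\,i-n}\,\widehat{\Theta_{i,\kappa}}^{(n-i)}\!\left(\tfrac{l}{3}\right)=3\,\delta_{l0}\delta_{n0}.
\]
It is this family of identities, indexed by $n\in\{0,1,2,3\}$ and $l\in\mathbb{Z}$, that I would establish.

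I would organize the check by the residue of $l$ modulo $3$. For $l\in 3\mathbb{Z}\setminus\{0\}$ the argument $l/3$ is a nonzero integer, so the Strang-Fix condition (property $(v)$ of the B-splines) forces $\widehat{Q_{4}}^{(k)}(l/3)=0$ for the relevant $k$; since each $\widehat{\Theta_{i,\kappa}}^{(n-i)}(l/3)$ is a linear combination of such values, the left-hand side vanishes and the identity is immediate. For $l=0$ I would substitute the explicit numbers listed in the text, such as $\widehat{\Theta_{0,\kappa}}(0)=3$, $\widehat{\Theta_{0,\kappa}}''(0)=-12\pi^{2}$, $\widehat{\Theta_{1,\kappa}}'(0)=-4\pi\mathrm{i}$, $\widehat{\Theta_{2,\kappa}}(0)=\tfrac12$, together with the vanishing of $\widehat{\Theta_{0,\kappa}}'''$, $\widehat{\Theta_{1,\kappa}}''$ and $\widehat{\Theta_{2,\kappa}}'$ at $0$, and confirm the four cases $n=0,1,2,3$ by elementary arithmetic; the case $n=0$ is just the normalization $\widehat{\Theta_{0,\kappa}}(0)=3$.

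The remaining classes $l\in 1+3\mathbb{Z}$ and $l\in 2+3\mathbb{Z}$ carry the real work, but since the tabulated values in one class are the complex conjugates of those in the other, it suffices to treat $l\in 1+3\mathbb{Z}$. Here each $\widehat{\Theta_{i,\kappa}}^{(n-i)}(l/3)$ is written as a combination of $\widehat{Q_{4}}(l/3)$, $\widehat{Q_{4}}'(l/3)$ and $\widehat{Q_{4}}''(l/3)$ with coefficients assembled from $\alpha,\beta,\gamma,a,b$; for each fixed $n$ I would substitute and then collect the coefficient of each $\widehat{Q_{4}}^{(k)}(l/3)$ separately, showing it is zero. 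No Strang-Fix simplification is available in these classes, so the cancellation is purely algebraic. As a representative check, for $n=2$ the coefficient of $\widehat{Q_{4}}(l/3)$ is $\tfrac12(13-3\sqrt{3}\mathrm{i})+(-7+\sqrt{3}\mathrm{i})+\tfrac12(1+\sqrt{3}\mathrm{i})=0$, and that of $\widehat{Q_{4}}'(l/3)$ likewise vanishes. The main obstacle is the case $n=3$, where $\widehat{\Theta_{0,\kappa}}'''$, $\widehat{\Theta_{1,\kappa}}''$ and $\widehat{\Theta_{2,\kappa}}'$ all contribute and three separate coefficient cancellations (involving $\widehat{Q_{4}}$, $\widehat{Q_{4}}'$ and $\widehat{Q_{4}}''$) must be verified using the precise values of $\alpha,\beta,\gamma,a,b$; this is where the bookkeeping is heaviest and an arithmetic slip most likely. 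Once all these cancellations are confirmed, $S_W^{\kappa}$ reproduces polynomials of degree $\le 3$, and the corollary follows at once from Corollary \ref{approxcor} for arbitrary $\alpha>0$.
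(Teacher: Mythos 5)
Your proposal follows the paper's own route exactly: the paper likewise reduces this corollary to Corollary \ref{approxcor} by verifying the reproducing polynomial property of order $3$ through substitution of the tabulated values $\widehat{\Theta_{i,\kappa}}^{(r)}(l/3)$ into \eqref{ftthetacond}, and your organization by residue classes mod $3$ (Strang--Fix for $l\in3\mathbb{Z}\setminus\{0\}$, explicit arithmetic at $l=0$, algebraic coefficient cancellation otherwise) simply fills in the computation the paper leaves to the reader, with your sample check for $n=2$ being correct. One small caution: as tabulated, the class-$(2+3\mathbb{Z})$ entries are not literal complex conjugates of the class-$(1+3\mathbb{Z})$ ones (the prefactors $2\pi\mathrm{i}$, $-4\pi^{2}$, etc.\ are not conjugated), but your reduction to a single class is still valid because, after collecting the coefficient of each $\widehat{Q_4}^{(k)}(l/3)$, these coefficients are values of Laurent polynomials with real coefficients evaluated at $e^{2\pi\mathrm{i}/3}$ and at its conjugate, so the class-$2$ cancellations are exactly the conjugates of the class-$1$ ones.
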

\subsection{For $\kappa=(Q_4,\tfrac{1}{2},2)$}
Using the properties of $B$- splines, we get
$$\Psi_{\kappa}(t)
=\begin{bmatrix}
\frac{1}{48}(1+23e^{2\pi\mathrm{i}t}) & \frac{e^{2\pi\mathrm{i}t}}{48}(23+e^{2\pi\mathrm{i}t})\\ \\
\frac{1}{8}(1-5e^{2\pi\mathrm{i}t}) & \frac{e^{2\pi\mathrm{i}t}}{8}(5-e^{2\pi\mathrm{i}t})
\end{bmatrix}
=\begin{bmatrix}
\frac{1}{48}(1+23w) & \frac{w}{48}(23+w)\\ \\
\frac{1}{8}(1-5w) & \frac{w}{8}(5-w)
\end{bmatrix}$$     
and
$\det\Psi_{\kappa}(t)=\frac{w}{64}(-3+38w-3w^2)$, where $w=e^{2\pi\mathrm{i}t}$. Since $|\det\Psi_{\kappa}(t)|\neq0$, $\tfrac{1}{2}+2\mathbb{Z}$ is a CIS of order $1$ for $V(Q_4)$. The inverse of $\Psi_{\kappa}$  is  given by
$$\Psi^{-1}_{\kappa}(t)= \tfrac{64}{3w^2(\frac{38}{3}-w-w^{-1})}
\begin{bmatrix}
\frac{w(5-w)}{8} & \frac{-w(23+w)}{48}\\ \\
\frac{-(1-5w)}{8}& \frac{1+23w}{48}
\end{bmatrix}$$
and its Fourier coefficients are given by
$$\widehat{\Psi^{-1}_{\kappa}}(n)=
\begin{bmatrix}
\frac{8r}{3(1-r^2)}(5r^{|n+1|}-r^{|n|}) & \frac{-4r}{9(1-r^2)}(23r^{|n+1|}+r^{|n|}) \\ \\
\frac{-8r}{3(1-r^2)}(r^{|n+2|}-5r^{|n+1|}) & \frac{4r}{9(1-r^2)}(r^{|n+2|}+23r^{|n+1|})
\end{bmatrix},~r=\tfrac{19-4\sqrt{22}}{3}. $$ 
Consequently, we have
\begin{eqnarray*}
\Theta_{0,\kappa}(t)
&=&\sum\limits_{v\in\mathbb{Z}}\Big[\widehat{(\Psi^{-1}_{\kappa})^{00}}(v) Q_4(t-2v)+
\widehat{(\Psi^{-1}_{\kappa})^{10}}(v) Q_4(t-2v-1)\Big]\\
&=&\tfrac{8r}{3(1-r^2)}\sum\limits_{v\in\mathbb{Z}}\Big[(5r^{|v+1|}-r^{|v|})Q_4(t-2v)+(5r^{|v+1|}-r^{|v+2|})Q_4(t-2v-1)\Big]\\
\end{eqnarray*} and
\begin{eqnarray*}
\Theta_{1,\kappa}(t)
&=&\sum\limits_{v\in\mathbb{Z}}\Big[\widehat{(\Psi^{-1}_{\kappa})^{01}}(v) Q_4(t-2v)+
\widehat{(\Psi^{-1}_{\kappa})^{11}}(v) Q_4(t-2v-1)\Big]\\
&=&\tfrac{-4r}{9(1-r^2)}\\
&&\times
\sum_{v\in\mathbb{Z}}\Big[(23r^{|v+1|}+r^{|v|})Q_4(t-2v)-(23r^{|v+1|}+r^{|v+2|})Q_4(t-2v-1)\Big].\\
\end{eqnarray*}
Hence every $f\in V(Q_4)$ can be written as
\begin{eqnarray*}
f(t)
&=&\sum_{l\in\mathbb{Z}}\Big[f\Big(\tfrac{1}{2}+2l\Big)\Theta_{0,\kappa}(t-2l)+
f^{\prime}\Big(\tfrac{1}{2}+
2l\Big)\Theta_{1,\kappa}(t-2l)\Big].\nonumber\\
\end{eqnarray*}
We now obtain from \eqref{thetaroof} that
$$\widehat{\Theta_{0,\kappa}}(\xi)=g_0(\xi)\widehat{Q_4}(\xi)~ \text{and}~ \widehat{\Theta_{1,\kappa}}(\xi)=g_1(\xi)\widehat{Q_4}(\xi),$$
and hence
\begin{eqnarray*}
\hspace{-5.4cm}\widehat{\Theta_{0,{\kappa}}}^{\prime}(\xi)
=g_0^{\prime}(\xi)\widehat{Q_4}(\xi)+g_0(\xi)\widehat{Q_4}^{\prime}(\xi),
\end{eqnarray*}
\begin{equation*}
\hspace{-3.1cm}\widehat{\Theta_{0,\kappa}}^{\prime\prime}(\xi)
=g_0^{\prime\prime}(\xi)\widehat{Q_4}(\xi)+g_0(\xi)\widehat{Q_4}^{\prime\prime}(\xi)+2g_0^{\prime}(\xi)\widehat{Q_4}^{\prime}(\xi),
\end{equation*}
\begin{equation*}
\widehat{\Theta_{0,\kappa}}^{\prime\prime\prime}(\xi)=g_0^{\prime\prime\prime}(\xi)\widehat{Q_4}(\xi)+3g_0^{\prime\prime}(\xi)\widehat{Q_4}^{\prime}(\xi)+3g_0^{\prime}(\xi)\widehat{Q_4}^{\prime\prime}(\xi)+g_0(\xi)\widehat{Q_4}^{\prime\prime\prime}(\xi),
\end{equation*}
\begin{equation*}
\hspace{-5.9cm}\widehat{\Theta_{1,{\kappa}}}^{\prime}(\xi)
=g_1^{\prime}(\xi)\widehat{Q_4}(\xi)+g_1(\xi)\widehat{Q_4}^{\prime}(\xi),
\end{equation*}
\begin{equation*}
\hspace{-3cm}\widehat{\Theta_{1,{\kappa}}}^{\prime\prime}(\xi)
=g_1^{\prime\prime}(\xi)\widehat{Q_4}(\xi)+2g_1^{\prime}(\xi)\widehat{Q_4}^{\prime}(\xi)+g_1(\xi)\widehat{Q_4}^{\prime\prime}(\xi),
\end{equation*}
where
$$g_0(\xi)=\dfrac{8z^2(-1+5z+5z^{2}-z^3)}{38z^{2}-3z^{4}-3},~
g_1(\xi)=\dfrac{4z^2(-1+23z-23z^{2}+z^{3})}{3(38z^{2}-3z^{4}-3)}.$$ 
\begin{equation*}
\widehat{\Theta_{0,{\kappa}}}(l/2)=
\begin{cases}
2 &~\text{if}~l=0,\\
0 &~\text{if}~ l\in\mathbb{Z}\setminus\{0\},
\end{cases}\nonumber~
\widehat{\Theta_{0,{\kappa}}}^{\prime}(l/2)=
\begin{cases}
-2\pi\mathrm{i} &~\text{if}~l=0,\\
0 &~\text{if}~l\in2\mathbb{Z}\setminus\{0\},\\
4\pi\mathrm{i}\widehat{Q_4}\left(\tfrac{l}{2}\right) &~\text{if}~l\in1+2\mathbb{Z},
\end{cases}
\end{equation*}
Therefore
\begin{equation*}
\hspace{-3.3cm}\widehat{\Theta_{0,{\kappa}}}^{\prime\prime}(l/2)=
\begin{cases}
-\tfrac{26}{3}\pi^2 &~\text{if} ~l=0,\\
0 &~\text{if}~ l\in2\mathbb{Z}\setminus\{0\},\\
8\pi\mathrm{i}\widehat{Q_4}^{\prime}\left(\tfrac{l}{2}\right)-24\pi^2\widehat{Q_4}\left(\tfrac{l}{2}\right) &~\text{if}~ l\in1+2\mathbb{Z},
\end{cases}
\end{equation*}
\begin{equation*}
\widehat{\Theta_{0,{\kappa}}}^{\prime\prime\prime}(l/2)=
\begin{cases}
22\pi^3\mathrm{i} &~\text{if} ~l=0,\\
0 &~\text{if}~ l\in2\mathbb{Z}\setminus\{0\},\\
12\pi\mathrm{i}\widehat{Q_4}^{\prime\prime}\left(\tfrac{l}{2}\right)-72\pi^2\widehat{Q_4}^{\prime}\left(\tfrac{l}{2}\right)-160\pi^3\mathrm{i}\widehat{Q_4}\left(\tfrac{l}{2}\right) &~\text{if}~ l\in1+2\mathbb{Z},
\end{cases}
\end{equation*}
\begin{equation*}
\hspace{-6.5cm}\widehat{\Theta_{1,{\kappa}}}(l/2)=
\begin{cases}
0 &~\text{if}~ l\in2\mathbb{Z},\\
-2\widehat{Q_4}\left(\tfrac{l}{2}\right)~&~\text{if}~ l\in1+2\mathbb{Z},
\end{cases}
\end{equation*}
\begin{equation*}
\hspace{-3.7cm}\widehat{\Theta_{1,{\kappa}}}^{\prime}(l/2)=
\begin{cases}
-\tfrac{5}{3}\pi\mathrm{i} &~\text{if} ~l=0,\\
0 &~\text{if}~ l\in2\mathbb{Z}\setminus\{0\},\\
-2\widehat{Q_4}^{\prime}\left(\tfrac{l}{2}\right)-6\pi\mathrm{i}\widehat{Q_4}\left(\tfrac{l}{2}\right) &~\text{if}~  l\in1+2\mathbb{Z},
\end{cases}
\end{equation*}
and
\begin{equation*}
\hspace{-0.7cm}\widehat{\Theta_{1,{\kappa}}}^{\prime\prime}(l/2)=
\begin{cases}
-\tfrac{10}{3}\pi^2&~\text{if} ~l=0,\\
0 &~\text{if}~ l\in2\mathbb{Z}\setminus\{0\},\\
-2\widehat{Q_4}^{\prime\prime}\left(\tfrac{l}{2}\right)-12\pi\mathrm{i}\widehat{Q_4}^{\prime}\left(\tfrac{l}{2}\right)+\frac{80\pi^2}{3}\widehat{Q_4}\left(\tfrac{l}{2}\right) &~\text{if}~  l\in1+2\mathbb{Z}.
\end{cases}
\end{equation*}
By substituting the values of $\widehat{\Theta_{i,\kappa}}^{(r)}(l/2)$ in \eqref{ftthetacond}, we can show that the operator $S_W^{\kappa}$ satisfies the reproducing polynomial property of order 3. Hence we obtain the following result from Corollary \ref{approxcor}.
\begin{cor}
Let $\kappa=(Q_4,\frac{1}{2},2).$ If $f\in\Lambda_{2}^p$ such that  $\tau_3(f^{(i)},\delta)_p=\mathcal{O}(\delta^{\alpha})$ as $\delta\to0+$ for each $i=0,1,$ then
\begin{equation}
\|S_W^{\kappa}f-f\|_{L^p(\mathbb{R})}= \mathcal{O}(W^{-\alpha})~ (W\to\infty).
\end{equation}
\end{cor}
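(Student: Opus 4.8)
The plan is to deduce this corollary directly from Corollary \ref{approxcor}, applied with $\rho = 2$ and $r = 3$. The hypothesis $\tau_3(f^{(i)},\delta)_p = \mathcal{O}(\delta^{\alpha})$ for $i = 0, 1$ is exactly the decay assumption required there, and $f \in \Lambda_2^p$ is assumed, so the only thing left to supply is the reproducing polynomial property: I must check that $S_W^{\kappa}$ reproduces every polynomial of degree $\leq 3$ for $\kappa = (Q_4,\tfrac{1}{2},2)$. Note this is consistent with $V(Q_4)$ consisting of piecewise cubics, and with the requirement $r \geq \rho$ since $3 \geq 2$. To verify reproduction I would invoke the equivalent condition in Lemma \ref{lemmafortheta}, part $(iii)$, i.e.\ the identity \eqref{ftthetacond} for $n = 0, 1, 2, 3$ and every $l \in \mathbb{Z}$, specialized to $\rho = 2$ and $a = \tfrac{1}{2}$.

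Concretely, I would substitute the explicitly tabulated values of $\widehat{\Theta_{i,\kappa}}^{(n-i-m)}(l/2)$ (for $i = 0, 1$, computed just above) into the left-hand side of \eqref{ftthetacond}, organizing the check into the three regimes $l = 0$, $l \in 2\mathbb{Z}\setminus\{0\}$, and $l \in 1 + 2\mathbb{Z}$, which is precisely how the listed formulas are case-split. The case $l \in 2\mathbb{Z}\setminus\{0\}$ is immediate, since every relevant Fourier-derivative value vanishes there, so both sides are $0$. For $l = 0$ I would insert the boundary values together with the normalizations $\widehat{Q_4}(0) = 1$, $\widehat{Q_4}^{\prime}(0) = -4\pi\mathrm{i}$, $\widehat{Q_4}^{\prime\prime}(0) = -\tfrac{52}{3}\pi^2$, and $\widehat{Q_4}^{\prime\prime\prime}(0) = 80\pi^3\mathrm{i}$: the $n = 0$ sum collapses to $\widehat{\Theta_{0,\kappa}}(0) = 2 = \rho$, matching $\rho\,\delta_{l0}\delta_{n0}$, while for $n = 1, 2, 3$ the various powers of $\pi\mathrm{i}$ are arranged to cancel.

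The main obstacle is the case $l \in 1 + 2\mathbb{Z}$. Here $l/2$ is a half-integer lying outside $\mathbb{Z}$, so the Strang-Fix condition (property $(v)$ of the $B$-splines) does \emph{not} annihilate the terms, and the required vanishing of the left-hand side of \eqref{ftthetacond} for $n = 1, 2, 3$ must come from an exact algebraic cancellation instead. After substitution each summand becomes a linear combination of $\widehat{Q_4}(l/2)$, $\widehat{Q_4}^{\prime}(l/2)$, and $\widehat{Q_4}^{\prime\prime}(l/2)$ with coefficients that are explicit rational multiples of powers of $2\pi\mathrm{i}$ (the weights $\binom{n}{i}\,i!\,\binom{n-i}{m}\,a^m\,(2\pi\mathrm{i})^{m+i-n}$ folded against the tabulated $\widehat{\Theta_{i,\kappa}}^{(\cdot)}(l/2)$). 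I would therefore collect, for each fixed $n$, the coefficient of each $\widehat{Q_4}^{(k)}(l/2)$ separately and verify that every such coefficient is zero. This bookkeeping — showing that the binomial weights and the tabulated Fourier values conspire to cancel termwise — is the delicate part; once it is confirmed, the reproducing polynomial property of order $3$ holds, the hypotheses of Corollary \ref{approxcor} are met, and the stated $\mathcal{O}(W^{-\alpha})$ estimate follows.
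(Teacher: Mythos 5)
Your proposal follows essentially the same route as the paper: the paper likewise establishes the reproducing polynomial property of order $3$ for $\kappa=(Q_4,\tfrac{1}{2},2)$ by substituting the tabulated values of $\widehat{\Theta_{i,\kappa}}^{(r)}(l/2)$ into condition \eqref{ftthetacond} of Lemma \ref{lemmafortheta}, and then invokes Corollary \ref{approxcor} with $\rho=2$, $r=3$. Your case analysis ($l=0$, $l\in 2\mathbb{Z}\setminus\{0\}$, $l\in 1+2\mathbb{Z}$) and your identification of the odd-$l$ cancellation as the delicate step are consistent with, and indeed more explicit than, the paper's own brief verification.
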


We now approximate three real-valued functions $f_k, k=1,2,3$ using derivative sampling expansions.
The function $f_1 :\mathbb{R}\to \mathbb{R}$ is given by
$$f_1(t)=e^{-t^2/4}\sin(2\pi t).$$
It is an infinitely differentiable function and $f_1\in\Lambda_{\rho}^p$ for any $\rho\in\mathbb{N}.$
The function $f_2 :\mathbb{R}\to\mathbb{R}$ defined by 
$$f_2(t)=\begin{cases}
\sin^2\pi t&~\text{if}~ |t|\leq3,\\
0&~ \text{elsewhere},
\end{cases}$$
is piecewise continuously differentiable but its second derivative is no longer continuous. Further, $f_2\in\Lambda_3^p.$
Finally, we consider the discontinuous function
$$f_3(t)=\begin{cases}
-\frac{1}{2}t^3+2&~\text{if}~ -1.5<t<3,\\
0&~ \text{elsewhere}.
\end{cases}$$
It is an infinitely  differentiable function except at $t=-1.5$ and $t=3.$ Therefore, we can not say that $f_3\in\Lambda_{\rho}^p$ for every $\rho\geq2$ and $W\geq1$. However, for any irrational number $q$ if we choose the subcollection $\Sigma_{Nq}:= \Big(\tfrac{a+\rho l}{Nq}\Big)_{l\in\mathbb{Z}},$ $N\in\mathbb{N}$ from $\Sigma_{W}$  
and replace the space  $\Lambda_\rho^p$ by $$\tilde{\Lambda}_{\rho}^p:=\left\{f\in M(\mathbb{R}): \sum\limits_{l\in\mathbb{Z}}\sum\limits_{i=0}^{\rho-1}\left|f^{(i)}\left(\frac{a+\rho l}{Nq}\right)\right|^p<\infty~ \text{for any} ~N\in\mathbb{N}\right\},$$ we can show that $f_3\in\tilde{\Lambda}_{\rho}^p$ for any $\rho\geq1.$ We can realize that for any discontinuous function, we need to choose a subcollection from $\Sigma_W$ and its corresponding space in a suitable way.
Arguing as in \cite{BBSV2}, we can show that as $\delta\to0+$
\begin{itemize}
\item [$(i)$] $\tau_r(f_1^{(i)};\delta)_p=\mathcal{O}(\delta^r),$
for any $r\in\mathbb{N},~ i=0,1,\dots;$
\item[$(ii)$] 
$\tau_r(f_2;\delta)_p=
\begin{cases}
\mathcal{O}(\delta^r)~&\text{if}~r=1,2,\\
\mathcal{O}(\delta^{2+1/p})~&\text{if}~r\geq3,
\end{cases}$
$\tau_r(f_2^{\prime};\delta)_p=
\begin{cases}
\mathcal{O}(\delta)~&\text{if}~r=1,\\
\mathcal{O}(\delta^{1+1/p})~&\text{if}~r\geq2,
\end{cases}$ and $\tau_r(f_2^{\prime\prime};\delta)_p=\mathcal{O}(\delta^{1/p}),$ for any $r\in\mathbb{N};$
\item[$(iii)$] $\tau_r(f_3;\delta)_p=\mathcal{O}(\delta^{1/p}),$ for any $r\in\mathbb{N}.$ 
\end{itemize}
We now find the $\tau-$modulus for higher derivatives of $f_2$ and $f_3.$ Notice that $f_2$ and $f_3$ are infinitely differentiable functions except at two points. Now
$$f_3^{\prime}(t)=\begin{cases}
-\frac{3}{2}t^2~&\text{if}~-1.5<t<3,\\
0~&~\text{elsewhere except at}~ t=-1.5~ \text{and}~ t=3.
\end{cases}$$
In order to evaluate $\tau_1(f_3^{\prime},\delta),$ let
$$A_1:=\left[-\tfrac{3}{2}-\delta, -\tfrac{3}{2}+\delta\right]\cup[3-\delta, 3+\delta]\setminus\{-1.5,3\},$$
$$\hspace{-5cm}A_2:=\left[-\tfrac{3}{2}+\delta, 3-\delta\right],$$ 
and
$$\hspace{-5cm}I_{\delta/2}(t):=\left[t-\tfrac{\delta}{2}, t+\tfrac{\delta}{2}\right].$$
If $t\notin A_1\cup A_2,$ then it is clear that
$$\sup\limits_{x,x+h\in I_{\delta/2}(t)}|f_3^{\prime}(x+h)-f_3^{\prime}(x)|=0.$$
For $t\in A_1,$ we have
$$\sup\limits_{x,x+h\in I_{\delta/2}(t)}|f_3^{\prime}(x+h)-f_3^{\prime}(x)|\leq2\sup\limits_{x\in\mathbb{R}}|f_3^{\prime}(x)|\leq\frac{27}{2}.$$ 
Finally, for $t\in A_2,$ 
by the mean value theorem, we have
$$\sup\limits_{x,x+h\in I_{\delta/2}(t)}|f_3^{\prime}(x+h)-f_3^{\prime}(x)|\leq9\delta$$ noting that $x,x+h\in I_{\delta/2}(t)$ implies $|h|\leq\delta.$ Hence we obtain for the local modulus
$$\omega_1(f_3^{\prime};t;\delta)\begin{cases}
\leq\frac{27}{2}~&\text{if}~t\in A_1,\\
\leq9\delta~&\text{if}~t\in A_2,\\
=0 ~&~\text{elsewhere}.
\end{cases}$$
Consequently, for $1\leq p<\infty,$ $0<\delta<1,$ and some constant $C>0$, we get
\begin{eqnarray*}
\tau_1(f_3^{\prime};\delta)_p^p
&\leq&\int\limits_{A_1}\left(\frac{27}{2}\right)^p~dt+\int\limits_{A_2}(9\delta)^p~dt\\
&=&2\left(\frac{27}{2}\right)^p\delta+(9\delta)^p\left(\frac{9}{2}-\delta\right)\leq C\delta,
\end{eqnarray*}
which implies $\tau_1(f_3^{\prime};\delta)_p=\mathcal{O}(\delta^{1/p}),$ $\delta\to0+.$
It is known that
\begin{equation*}
\tau_r(f;\delta)_p\leq2^s\tau_{r-s}\left(f;\frac{r}{r-s}\delta\right)_p~(s=1,2,\dots,r-1)
\end{equation*}
and
\begin{equation}\label{proptau}
\hspace{-1.7cm}\tau_r(f;\lambda\delta)_p\leq(2(\lambda+1))^{r+1}\tau_{r}(f;\delta)_p~(\lambda>0),
\end{equation}
for each $r\in\mathbb{N}$ (see \cite{BBSV1}). Hence
$$\tau_r(f_3^{\prime};\delta)_p=\mathcal{O}(\delta^{1/p})~(\delta\to0+).$$
Similarly, we can show that  $$\tau_r(f_3^{(i)};\delta)_p=\mathcal{O}(\delta^{1/p})~(\delta\to0+),~\text{for any}~r\in\mathbb{N}~\text{and}~i=1,2,\dots.$$
Using the same argument, we can show that $\tau_r(f_2^{(i)};\delta)_p=\mathcal{O}(\delta^{1/p})~(\delta\to0+),$ for any $r\in\mathbb{N}$ and $i=3,4,\dots.$
Figures \ref{q3f1}-\ref{q4f3} show the approximation of the functions $f_k$, $k=1,2,3,$ by the series $S_W^{\kappa}f$ based on $\kappa=(Q_3,0,2)$ and $\kappa=(Q_4,0,3).$

\begin{figure}[H]
     \centering
     \begin{subfigure}[b]{0.3\textwidth}
         \centering
         \includegraphics[width=\textwidth]{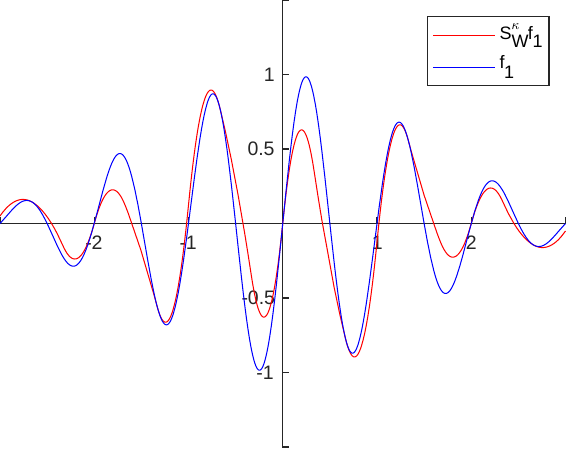}
         \caption{Graphs of $f_1$  and $S_W^{\kappa}f_1$ with $W=3.$}
         \label{q3w3f1}
     \end{subfigure}
     \hfill
     \begin{subfigure}[b]{0.3\textwidth}
         \centering
         \includegraphics[width=\textwidth]{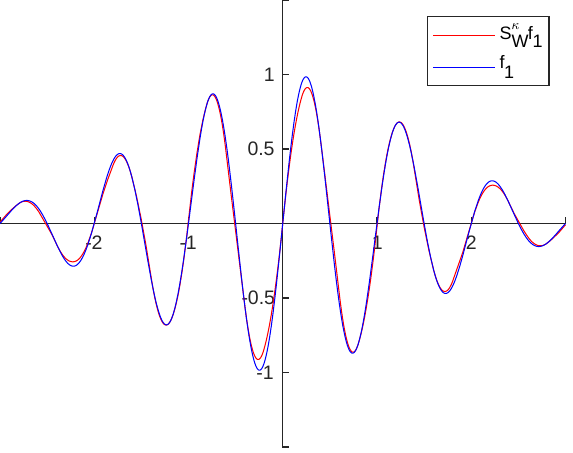}
         \caption{Graphs of $f_1$ and $S_W^{\kappa}f_1$ with $W=5.$}
         \label{q3w5f1}
     \end{subfigure}
     \newline
     \begin{subfigure}[b]{0.3\textwidth}
         \centering
         \includegraphics[width=\textwidth]{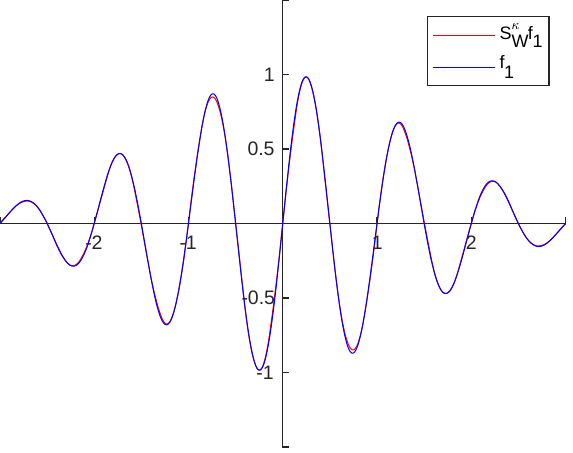}
         \caption{Graphs of $f_1$ and $S_W^{\kappa}f_1$ with $W=7.$}
         \label{q3w7f1}
     \end{subfigure}
     \hfill
     \begin{subfigure}[b]{0.3\textwidth}
         \centering
         \includegraphics[width=\textwidth]{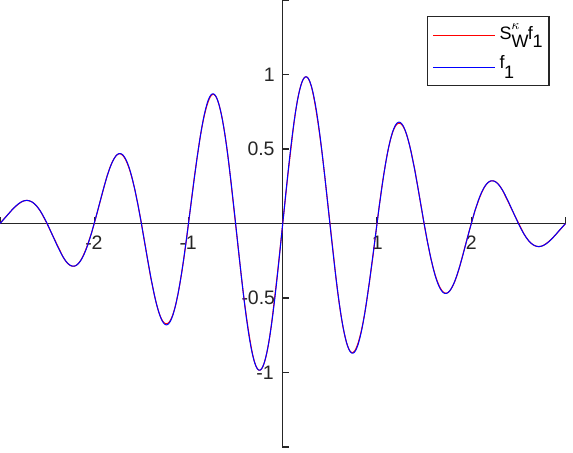}
         \caption{Graphs of $f_1$ and $S_W^{\kappa}f_1$ with $W=9.$}
         \label{q3w9f1}
     \end{subfigure}
        \caption{Approximation of $f_1$ based  on $S_W^{\kappa}f_1$  for $\kappa=(Q_3,0,2).$}
        \label{q3f1}
\end{figure}
\begin{figure}[H]
     \centering
     \begin{subfigure}[b]{0.3\textwidth}
         \centering
         \includegraphics[width=\textwidth]{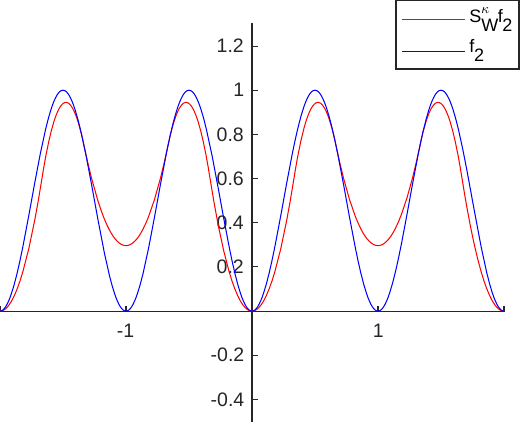}
         \caption{Graphs of $f_2$ and $S_W^{\kappa}f_2$ with $W=3.$}
         \label{q3w3f2}
     \end{subfigure}
     \hfill
     \begin{subfigure}[b]{0.3\textwidth}
         \centering
         \includegraphics[width=\textwidth]{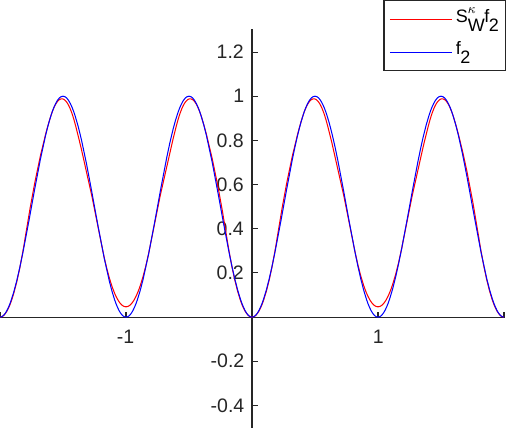}
         \caption{Graphs of $f_2$ and $S_W^{\kappa}f_2$ with $W=5.$}
         \label{q3w5f2}
     \end{subfigure}
     \newline
     \begin{subfigure}[b]{0.3\textwidth}
         \centering
         \includegraphics[width=\textwidth]{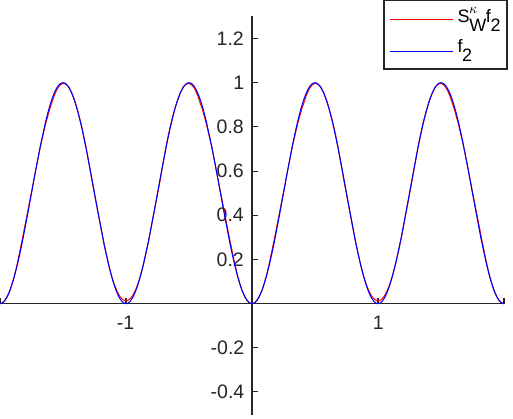}
         \caption{Graphs of $f_2$ and $S_W^{\kappa}f_2$ with $W=7.$}
         \label{q3w7f2}
     \end{subfigure}
     \hfill
     \begin{subfigure}[b]{0.3\textwidth}
         \centering
         \includegraphics[width=\textwidth]{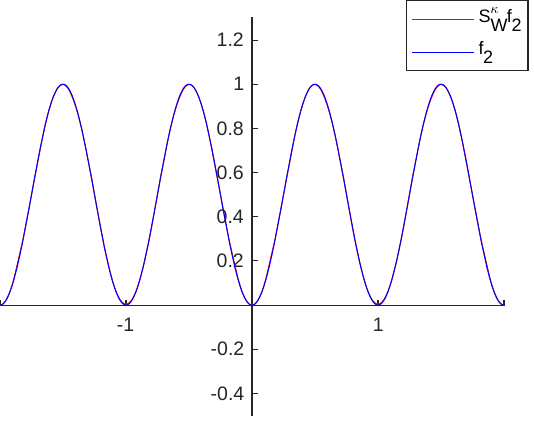}
         \caption{Graphs of $f_2$ and $S_W^{\kappa}f_2$ with $W=9.$}
         \label{q3w9f2}
     \end{subfigure}
        \caption{Approximation of $f_2$ based  on $S_W^{\kappa}f_2$  for $\kappa=(Q_3,0,2).$}
        \label{q3f2}
\end{figure}

\begin{figure}[H]
     \centering
     \begin{subfigure}[b]{0.3\textwidth}
         \centering
         \includegraphics[width=\textwidth]{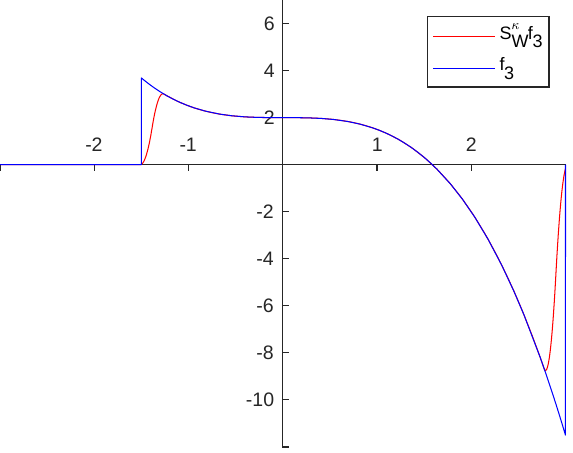}
         \caption{Graphs of $f_3$ and $S_W^{\kappa}f_3$ with $W=3\sqrt{7}.$}
         \label{q3w3sqrt7f3}
     \end{subfigure}
     \hfill
     \begin{subfigure}[b]{0.3\textwidth}
         \centering
         \includegraphics[width=\textwidth]{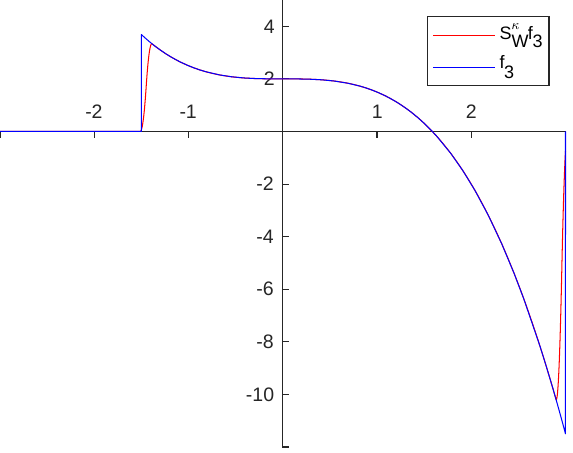}
         \caption{Graphs of $f_3$ and $S_W^{\kappa}f_3$ with $W=6\sqrt{7}.$}
         \label{q3w6sqrt7f3}
     \end{subfigure}
     \newline
     \begin{subfigure}[b]{0.3\textwidth}
         \centering
         \includegraphics[width=\textwidth]{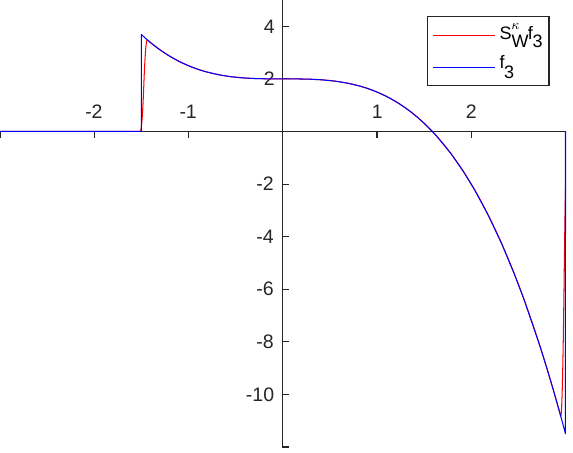}
         \caption{Graphs of $f_3$ and $S_W^{\kappa}f_3$ with $W=10\sqrt{7}.$}
         \label{q3w10sqrt7f3}
     \end{subfigure}
     \hfill
     \begin{subfigure}[b]{0.3\textwidth}
         \centering
         \includegraphics[width=\textwidth]{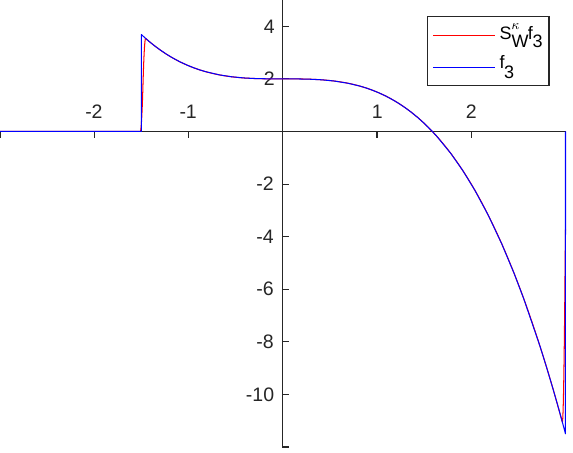}    
         \caption{Graphs of $f_3$  and $S_W^{\kappa}f_3$ with $W=13\sqrt{7}.$}
         \label{q3w13sqrt7f3}
     \end{subfigure}
        \caption{Approximation of $f_3$ based  on $S_W^{\kappa}f_3$  for $\kappa=(Q_3,0,2).$}
        \label{q3f3}
\end{figure}
\begin{figure}[H]
     \centering
     \begin{subfigure}[b]{0.3\textwidth}
         \centering
         \includegraphics[width=\textwidth]{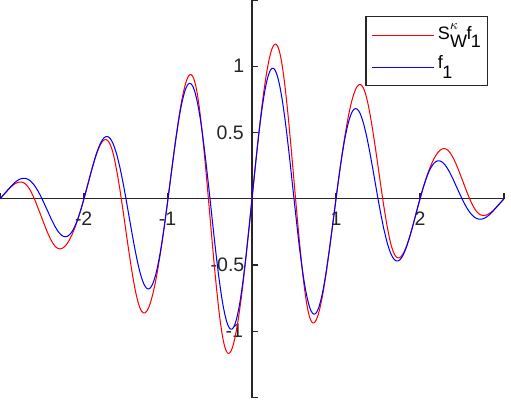}
         \caption{Graphs of $f_1$ and $S_W^{\kappa}f_1$ with $W=3.$}
         \label{q4w3f1}
     \end{subfigure}
     \hfill
     \begin{subfigure}[b]{0.3\textwidth}
         \centering
         \includegraphics[width=\textwidth]{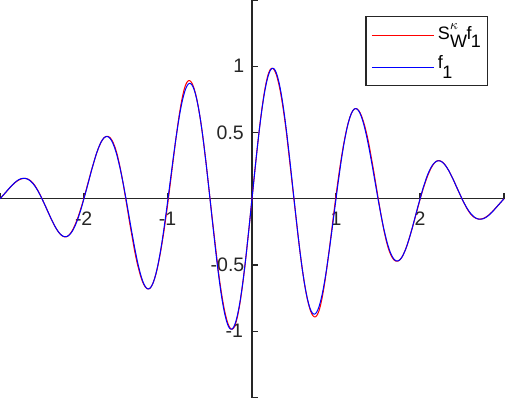}
         \caption{Graphs of $f_1$ and $S_W^{\kappa}f_1$ with $W=5.$}
         \label{q4w5f1}
     \end{subfigure}
     \newline
     \begin{subfigure}[b]{0.3\textwidth}
         \centering
         \includegraphics[width=\textwidth]{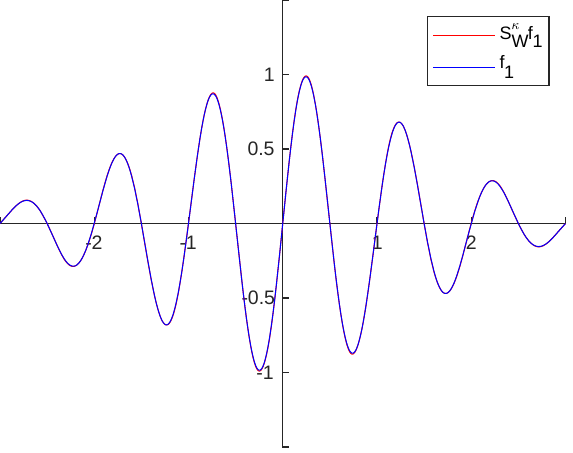}
         \caption{Graphs of $f_1$ and $S_W^{\kappa}f_1$ with $W=7.$}
         \label{q4w7f1}
     \end{subfigure}
      \hfill
     \begin{subfigure}[b]{0.3\textwidth}
         \centering
         \includegraphics[width=\textwidth]{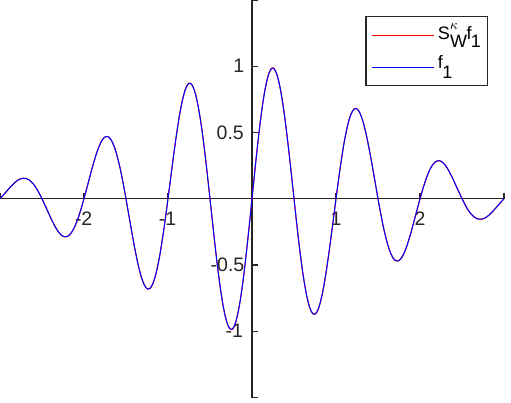}
         \caption{Graphs of $f_1$ and $S_W^{\kappa}f_1$ with $W=9.$}
         \label{q4w9f1}
     \end{subfigure}
        \caption{Approximation of $f_1$ based  on $S_W^{\kappa}f_1$  for $\kappa=(Q_4,0,3).$}
        \label{q4f1}
\end{figure}

\begin{figure}[H]
     \centering
     \begin{subfigure}[b]{0.3\textwidth}
         \centering
         \includegraphics[width=\textwidth]{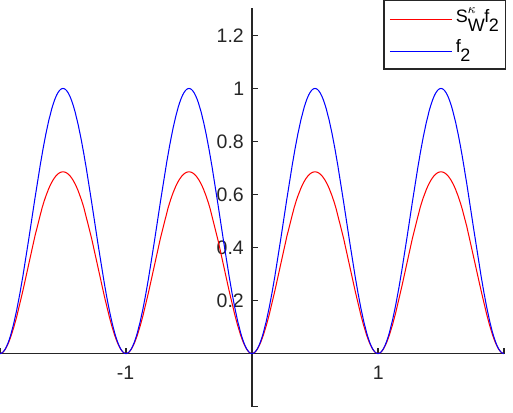}
         \caption{Graphs of $f_2,$ $S^1_Wf_2,$ and $S_W^{\kappa}f_2$ with $W=3.$}
         \label{q4w3f2}
     \end{subfigure}
     \hfill
     \begin{subfigure}[b]{0.3\textwidth}
         \centering
         \includegraphics[width=\textwidth]{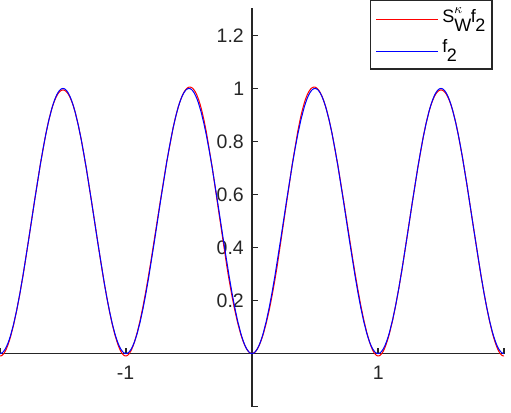}
         \caption{Graphs of $f_2$ and $S_W^{\kappa}f_2$ with $W=5.$}
         \label{q4w5f2}
     \end{subfigure}
     \newline
     \begin{subfigure}[b]{0.3\textwidth}
         \centering
         \includegraphics[width=\textwidth]{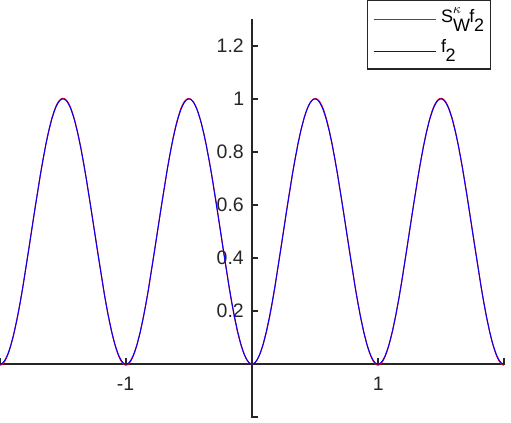}
         \caption{Graphs of $f_2$ and $S_W^{\kappa}f_2$ with $W=7.$}
         \label{q4w7f2}
     \end{subfigure}
     \hfill
     \begin{subfigure}[b]{0.3\textwidth}
         \centering
         \includegraphics[width=\textwidth]{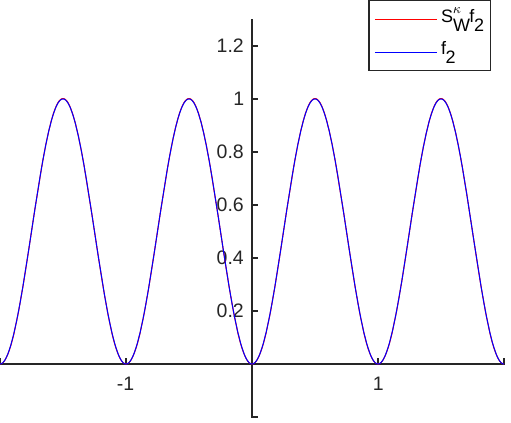}
         \caption{Graphs of $f_2$ and $S_W^{\kappa}f_2$ with $W=9.$}
         \label{q4w9f2}
     \end{subfigure}
        \caption{Approximation of $f_2$ based  on $S_W^{\kappa}f_2$  for $\kappa=(Q_4,0,3).$}
        \label{q4f2}
\end{figure}

\begin{figure}[H]
     \centering
     \begin{subfigure}[b]{0.3\textwidth}
         \centering
         \includegraphics[width=\textwidth]{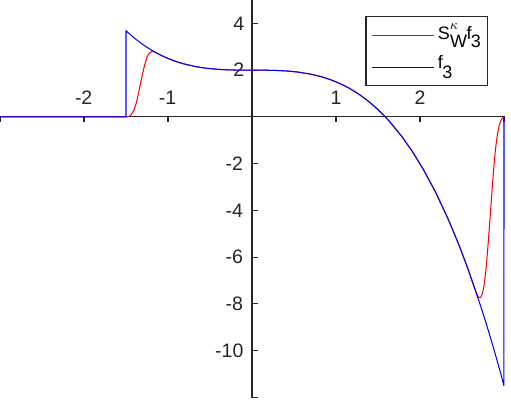}
         \caption{Graphs of $f_3$ and $S_W^{\kappa}f_3$ with $W=3\sqrt{7}.$}
         \label{q4w3sqrt7f3}
     \end{subfigure}
     \hfill
     \begin{subfigure}[b]{0.3\textwidth}
         \centering
         \includegraphics[width=\textwidth]{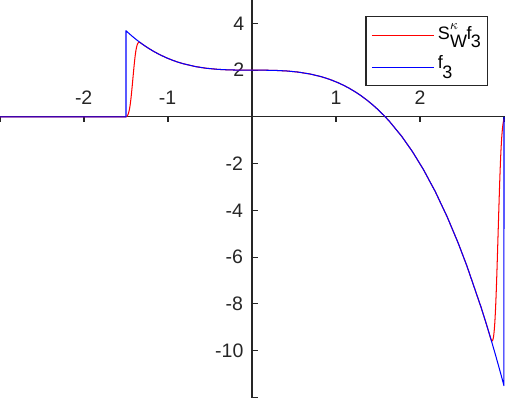}
         \caption{Graphs of $f_3$ and $S_W^{\kappa}f_3$ with $W=6\sqrt{7}.$}
         \label{q4w6sqrt7f3}
     \end{subfigure}
     \newline
     \begin{subfigure}[b]{0.3\textwidth}
         \centering
         \includegraphics[width=\textwidth]{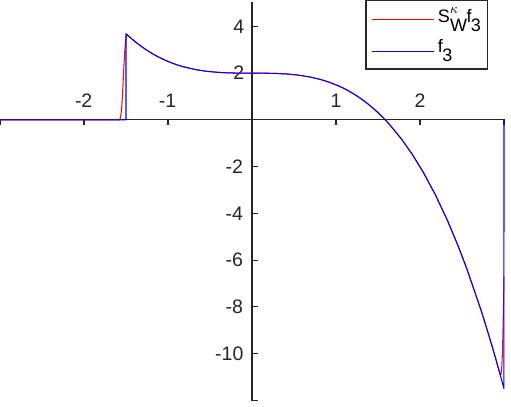}
         \caption{Graphs of $f_3$ and $S_W^{\kappa}f_3$ with $W=10\sqrt{7}.$}
         \label{q4w10sqrt7f3}
     \end{subfigure}
     \hfill
     \begin{subfigure}[b]{0.3\textwidth}
         \centering
         \includegraphics[width=\textwidth]{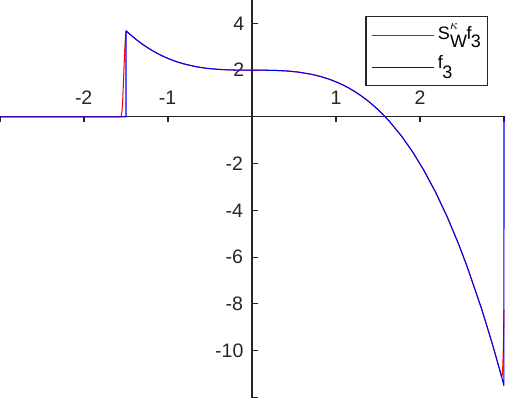}
         \caption{Graphs of $f_3$ and $S_W^{\kappa}f_3$ with $W=13\sqrt{7}.$}
         \label{q4w13sqrt7f3}
     \end{subfigure}
        \caption{Approximation of $f_3$ based  on $S_W^{\kappa}f_3$  for $\kappa=(Q_4,0,3).$}
        \label{q4f3}
\end{figure}

\section{Appendix}
We start with some results on Steklov functions $f_{r,h},$ which are well-known in connection with the classical moduli of continuity $\omega_r(f;x;\delta)$ as well as the $\tau$-modulus $\tau_r(f;\delta)_p.$ 
The following result from \cite{BBSV1} can be proved with a small modification.
\begin{prop}\label{frh}
For each function $f\in\Lambda_{\rho}^p,$ $1\leq p<\infty$ and each $r\in\mathbb{N},$ $h>0$, there exists  a function
$f_{r,h}$ with the following properties:
\begin{itemize}
\item [$(i)$] $f_{r,h}\in W_p^r\cap C(\mathbb{R})$ and for its $s^{th}$ derivative $$\|f_{r,h}^{(s)}\|_{L^p(\mathbb{R})}\leq C_rh^{-s}\tau_s(f;h)_p,~ (s=1, 2, \dots, r),$$
\item [$(ii)$] $|f^{(i)}(x)-f^{(i)}_{r,h}(x)|\leq \omega_r(f^{(i)};x;2h),~(x\in\mathbb{R})$, $i=0,1,\dots, \rho-1,$
\item [$(iii)$] $\|f-f_{r,h}\|_{L^p(\mathbb{R})}\leq \tau_r(f;2h)_p.$
\end{itemize}
\end{prop}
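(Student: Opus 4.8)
The plan is to take for $f_{r,h}$ the classical $r$-th order Steklov function and to verify $(i)$--$(iii)$ along the lines of \cite{BBSV1}; the genuinely new point is only the extension of the pointwise bound $(ii)$ from $f$ itself to all derivatives $f^{(i)}$, $0\le i\le\rho-1$. Writing $\sigma=t_1+\cdots+t_r$, I would set
\begin{equation*}
f_{r,h}(x):=-\sum_{j=1}^r(-1)^j\binom{r}{j}\frac{1}{h^r}\int_{[0,h]^r}f\Big(x+\tfrac{j}{r}\sigma\Big)\,dt_1\cdots dt_r,
\end{equation*}
which is an $r$-fold average of $f$, i.e.\ the convolution of $f$ with a scaled, reflected $B$-spline-type kernel $K$ of order $r$ (the pushforward of Lebesgue measure on $[0,h]^r$ under $\sigma$ being an order-$r$ $B$-spline density).

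The backbone is the elementary identity $f(x)=(-1)^r\Delta^r_\eta f(x)-\sum_{j=1}^r(-1)^j\binom{r}{j}f(x+j\eta)$, valid for every increment $\eta$, which follows by isolating the $j=0$ term in $\Delta^r_\eta f$. Averaging it over $\eta=\sigma/r$ with $t_i\in[0,h]$ gives
\begin{equation*}
f(x)-f_{r,h}(x)=\frac{(-1)^r}{h^r}\int_{[0,h]^r}\Delta^r_{\sigma/r}f(x)\,dt_1\cdots dt_r.
\end{equation*}
Since $\sigma/r\in[0,h]$ forces the nodes $x,\dots,x+\sigma$ to lie in $[x-rh,x+rh]$, each integrand is bounded by $\omega_r(f;x;2h)$; this yields $(ii)$ for $i=0$ at once, and taking the $L^p$-norm gives $(iii)$.

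For the modification in $(ii)$ I would use that $f\in\Lambda_\rho^p$ is $(\rho-1)$-times differentiable, so for $i\le\rho-1$ one may differentiate under the integral sign. Because $x$ enters the integrand only through $x+\tfrac{j}{r}\sigma$ with unit coefficient, the chain rule produces no extra factor and $f_{r,h}^{(i)}=(f^{(i)})_{r,h}$. Applying the displayed identity to $f^{(i)}$ in place of $f$ then gives $|f^{(i)}(x)-f_{r,h}^{(i)}(x)|\le\omega_r(f^{(i)};x;2h)$, which is $(ii)$; here I only need $f^{(i)}$ to be measurable and locally bounded, which is part of $f\in\Lambda_\rho^p\subset M(\mathbb{R})$.

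The remaining property $(i)$ and the regularity $f_{r,h}\in W_p^r\cap C(\mathbb{R})$ are obtained instead by moving derivatives onto the smooth kernel: by the $B$-spline differentiation rule (property $(iii)$ of $Q_m$ in Section 3), for $1\le s\le r$ the derivative $K^{(s)}$ is $h^{-s}$ times an $s$-th finite-difference combination of a lower-order $B$-spline. Since differences commute with convolution, $f_{r,h}^{(s)}$ is then a finite combination of averaged $s$-th differences of $f$ with overall scale $h^{-s}$ and increment at most $h$; bounding these by $\omega_s(f;\bullet;h)$ and passing to $L^p$ (rescaling $\tau$ by \eqref{proptau} if needed) yields $\|f_{r,h}^{(s)}\|_{L^p}\le C_rh^{-s}\tau_s(f;h)_p$, while $K\in C^{r-2}$ with absolutely continuous $(r-1)$-st derivative gives $f_{r,h}\in W_p^r\cap C(\mathbb{R})$. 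I expect the main obstacle to be exactly this bookkeeping in $(i)$: one must express each $K^{(s)}$ as a scaled $s$-th difference with increment controlled by $h$, so that the constant depends only on $r$ and the power $h^{-s}$ comes out correctly, whereas $(ii)$ and $(iii)$ are immediate from the averaging identity.
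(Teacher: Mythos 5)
Your construction is exactly the order-$r$ Steklov function from \cite{BBSV1} that the paper invokes (the paper gives no proof of Proposition \ref{frh} beyond the remark that the result of \cite{BBSV1} ``can be proved with a small modification''), and your modification --- commuting the Steklov average with differentiation so that $f_{r,h}^{(i)}=(f^{(i)})_{r,h}$ and then running the $i=0$ argument on each $f^{(i)}$ --- is precisely the small modification intended, while $(i)$ and $(iii)$ are unchanged from \cite{BBSV1}. The approach is correct and essentially the same as the paper's; the only imprecision is your claim that local boundedness of $f^{(i)}$ for $i\geq 1$ follows from $\Lambda_{\rho}^p\subset M(\mathbb{R})$ (that inclusion only bounds $f$ itself), but the differentiation under the integral sign can be justified within the regularity the framework already presupposes, so this does not affect the argument.
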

\begin{pf}[\underline{\textbf{Proof of Theorem \ref{geninterpolation}}}]
Arguing as in \cite{BBSV1}, we can easily show that 
\begin{equation}\label{omega}
\|\omega_r(f;\bullet;2h)\|_{\ell_{\rho}^p(\Sigma)}\leq 2^{\frac{1}{p}+2(r+1)}\sum\limits_{i=0}^{\rho-1}\tau_r\left(f^{(i)};h+\dfrac{\overline\Delta}{r}\right)_p,
\end{equation}
for every $f\in\Lambda_{\rho}^p$. 
Take the function $f_{r,h}\in W_p^r\cap C(\mathbb{R})$ of Proposition \ref{frh} for some $h>0.$ Then 
\begin{eqnarray*}
\|L_{\beta}f-f\|_{L^p(\mathbb{R})}
&\leq&\|L_{\beta}(f-f_{r,h})\|_{L^p(\mathbb{R})}+\|L_{\beta}f_{r,h}-f_{r,h}\|_{L^p(\mathbb{R})}+\|f_{r,h}-f\|_{L^p(\mathbb{R})}\\
&=:& S_1+S_2+S_3,
\end{eqnarray*}
say. By Proposition \ref{frh} and \eqref{omega}, we have
\begin{eqnarray}\label{S1}
S_1
&\leq& K_1\|f-f_{r,h}\|_{\ell_{\rho}^p(\Sigma_{\beta})}\nonumber\\
&=&K_1\left\{\sum\limits_{i=0}^{\rho-1}\sum\limits_{j\in\mathbb{Z}}\left|\left(f^{(i)}(x_{j,\beta})-f^{(i)}_{r,h}(x_{j,\beta})\right)\right|^p\Delta_{j,\beta}\right\}^{1/p}\nonumber\\
&\leq&K_1\Big\{\sum\limits_{i=0}^{\rho-1}\sum\limits_{j\in\mathbb{Z}}|\omega_r(f^{(i)};x_{j,\beta};2h)|^p\Delta_{j,\beta}\Big\}^{1/p}\nonumber\\
&\leq& K_12^{\frac{1}{p}+2(r+1)}\sum\limits_{i=0}^{\rho-1}\tau_r\left(f^{(i)};h+\dfrac{\overline\Delta_{\beta}}{r}\right)_p.
\end{eqnarray}
Take $h= \overline\Delta_{\beta}^{s/r}.$ Since $\overline\Delta_{\beta}\leq r$ and $s\leq r,$
$$h+\frac{\overline\Delta_{\beta}}{r}\leq 2\overline\Delta_{\beta}^{s/r}.$$
Therefore, \eqref{S1} becomes
\begin{eqnarray}\label{interpolations1}
S_1&\leq&K_12^{\frac{1}{p}+2(r+1)}\sum\limits_{i=0}^{\rho-1}\tau_r(f^{(i)};2\overline\Delta_{\beta}^{s/r})_p\nonumber\\
&\leq&K_12^{\frac{1}{p}+2(r+1)}4^{r+1}\sum\limits_{i=0}^{\rho-1}\tau_r(f^{(i)};\overline\Delta_{\beta}^{s/r})_p.
\end{eqnarray}
From Proposition \ref{frh}, we obtain 
\begin{equation}\label{interpolations2}
S_2\leq K_2\overline\Delta_{\beta}^s\|f_{r,h}^{(r)}\|_{L^p(\mathbb{R})}\leq K_2 C_r\tau_r(f;\overline\Delta_{\beta}^{s/r})_p
\end{equation}
and 
\begin{eqnarray}\label{interpolations3}
S_3\leq\tau_r(f;2h)_p
\leq4^{r+1}\tau_r(f;\overline\Delta_{\beta}^{s/r})_p,
\end{eqnarray}
using \eqref{proptau}. Altogether inequalities \eqref{interpolations1}, \eqref{interpolations2}, and \eqref{interpolations3} imply  \eqref{interpolationthm}.
\end{pf}

\end{document}